\newtheorem{theorem}{Theorem}
\theoremstyle{definition}
\newtheorem{corollary}{Corollary}
\newtheorem{definition}{Definition}
\newtheorem{lemma}{Lemma}
\newtheorem{proposition}{Proposition}
\newtheorem{remark}{Remark}
\numberwithin{equation}{section}
\newcommand{\brak}[1]{\langle #1\rangle}
\DeclareMathOperator{\id}{Id}
\DeclareMathOperator{\rk}{rk}
\def\co{\colon\thinspace} 
\def\mf{\mathfrak}
\newskip\stdskip                      
\begin{document}

\title{Universal Khovanov-Rozansky $\mf{sl}(2)$ cohomology}
\author{Carmen Caprau}

\address{Department of Mathematics, California State University, Fresno}
\email{ccaprau@csufresno.edu}

\date{}
\subjclass[2000]{57M27, 57M25}
\keywords{cobordisms, foams, link homology, matrix factorization, webs}

\begin{abstract}
We generalize the Khovanov-Rozansky cohomology theory for $n=2$ by using a homogeneous potential that depends on two parameters, to obtain the universal  Khovanov-Rozansky $\mf{sl}(2)$-link cohomology. This theory is equivalent to the universal $\mf{sl}(2)$-link cohomology using foams, after tensoring both theories with appropriate rings. 

\end{abstract}
\maketitle

\section{Introduction}
In \cite{KhR1} Khovanov and Rozansky $(KR)$ constructed for each $n >0$ a bigraded rational (co)homology theory categorifying the $\mf{sl}(n)$-link polynomial (the case $n=0$ is treated in \cite{KhR2}). Their construction uses matrix factorizations with potential $x^{n+1}$ associated to certain planar graphs, and for $n=2$, the corresponding homology is equivalent to the Khovanov homology defined in \cite{Kh1}. Gornik \cite{G} carried out a deformation of the $KR$-theory with potential $x^{n+1} - (n+1)\beta^n x$, for $\beta \in \mathbb{C},$ and Rasmussen \cite{Ras2} and Wu ~\cite{W} investigated $KR$-homologies given by a general non-homogeneous monic potential with degree $n+1$ and complex coefficients. $KR$-construction can be generalized to give the \textit{universal} matrix factorization link homologies for all $n >0,$ by working with a general homogeneous potential.  Recently, Mackaay and Vaz \cite{MV2} worked out this generalization for $n=3,$ and proved that the universal rational $\mf{sl}(3)$-matrix factorization link homology is equivalent to the foam link homology in \cite{MV1} tensored with $\mathbb{Q}.$ 

In \cite{CC2} the author constructed the universal $\mf{sl}(2)$-link cohomology via foams modulo local relations, in the spirit of \cite{BN} and \cite{Kh2} (see also \cite{CC1}). In this paper we introduce the universal rational Khovanov-Rozansky link cohomology for $n=2,$ and show that it is isomorphic to the foam $\mf{sl}(2)$-link cohomology in \cite{CC2}, after both theories are tensored with appropriate rings. To obtain the universal $\mf{sl}(2)$-matrix factorization theory, we consider a potential $p$ that depends on two parameters $a$ and $h,$ and that satisfies $ \partial p /\partial x = 3 (x^2 - hx - a).$

$KR$-construction starts from a certain version of the calculus developed by Murakami, Ohtsuki and Yamada \cite{MOY}, calculus which involves planar trivalent graphs. $KR$-graphs contain two types of edges, namely oriented edges and unoriented thick edges. For our purpose, we consider graphs that are obtained from the latter ones by ``erasing" all unoriented thick edges.

\section {Webs and matrix factorizations}\label{factorizations}

\subsection{The web space}

A \textit{web} with boundary $B$ is a planar graph with univalent and bivalent vertices. The univalent vertices correspond to boundary points, such as the boundary $\partial{T}$ of a tangle, and bivalent vertices have either indegree 2 or outdegree 2. Specifically, the two arcs incident with a bivalent vertex are either oriented ``in'' or ``out'' as shown below: 
\[ \raisebox{-5pt}{\includegraphics[height=.2in]{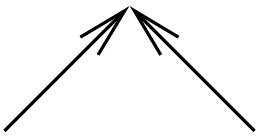}} \quad \text {or} \quad \raisebox{-5pt}{\includegraphics[height=.2in]{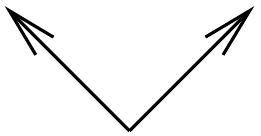}}\]
We call bivalent vertices \textit{singular points}. A \textit{closed web} is a web with empty boundary. We also allow webs with no bivalent vertices, thus oriented arcs or loops. We denote by $\textit{Foams}(B)$ the category whose objects are web diagrams with boundary $B,$ and whose morphisms are \textit{singular cobordisms}---called \textit{foams}---between such webs, regarded up to boundary-preserving isotopies. As morphisms, we read cobordisms from bottom to top, and we compose them by stacking one on top the other. 

Let $L$ be a link in $S^3.$ We fix a generic planar diagram $D$ of $L$ and resolve each crossing in two ways, as in Figure~\ref{fig:resolutions}. We refer to the diagram on the right as the \textit{oriented resolution}, and to the one on the left as the \textit{singular resolution}. We remark that the dotted lines should not be considered as edges---we prefer to draw them in order to record that there was a crossing before. 
\begin{figure}[ht]
$$\xymatrix@C=20mm@R=1.5mm{
  &  \includegraphics[height=0.5in]{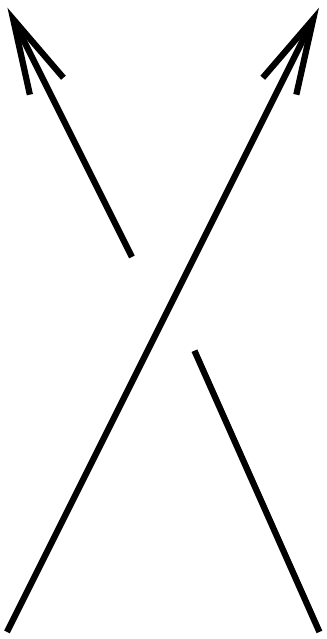} \ar[ld]_0\ar[rd]^1& \\
\includegraphics[height=0.5in]{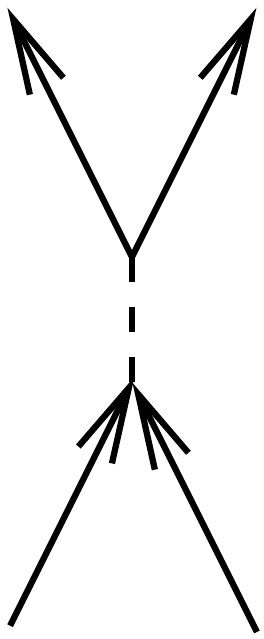} & & 
\includegraphics[height=0.5in]{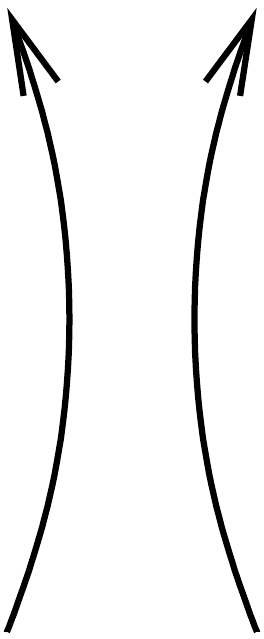} \\
  & \includegraphics[height=0.5in]{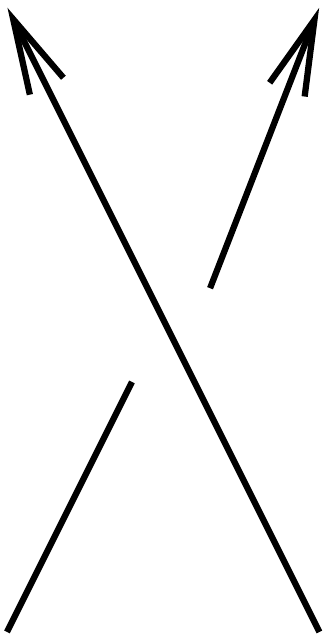} \ar[ul]^1 \ar[ur]_0&
}$$
\caption{The two ways of resolving crossings}
\label{fig:resolutions}
\end{figure}

As webs have singular points, foams have \textit{singular arcs} (or \textit{singular circles}) where orientations disagree. Basic cobordisms, as those between the two different resolutions of a crossing are depicted in Figure~\ref{fig:saddles}, where the arc colored red is a singular arc.

\begin{figure}[ht!]
\begin{center}
\includegraphics[height=.65in]{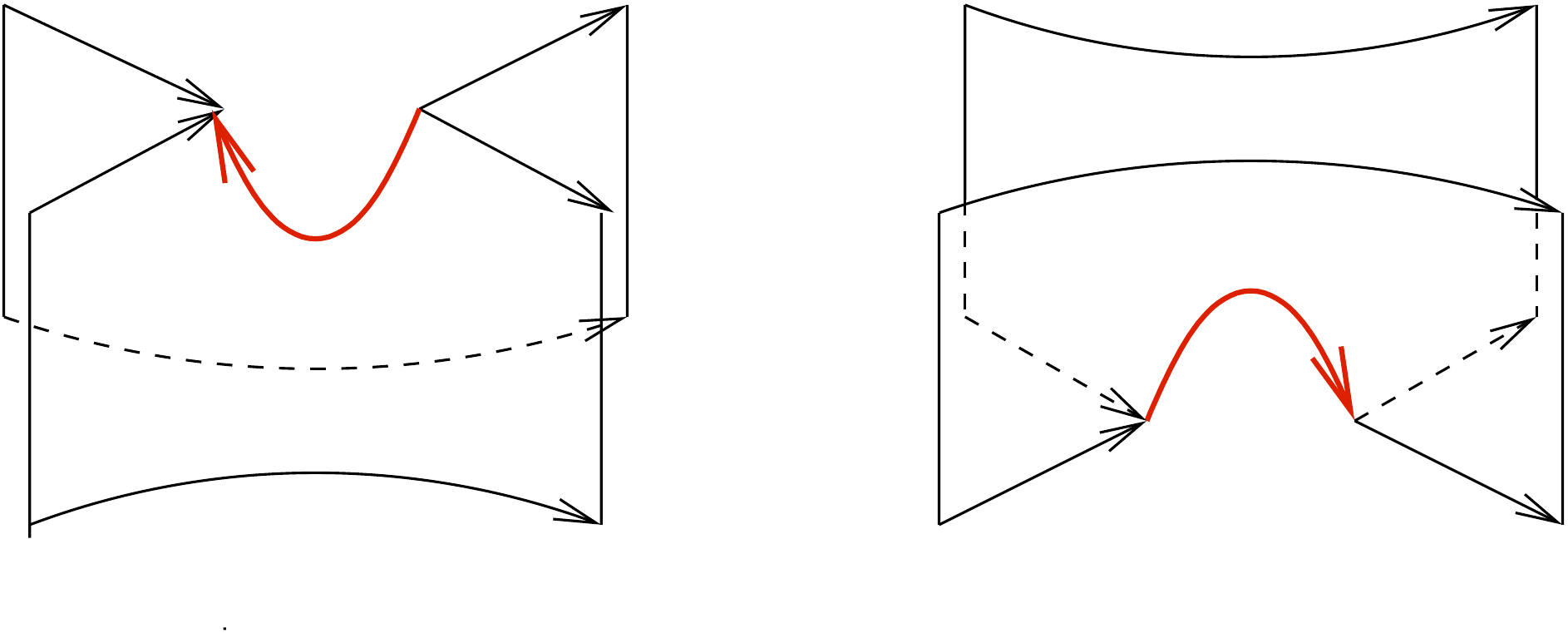}\end{center}
\caption{Singular saddles}
\label{fig:saddles}
\end{figure}

A diagram $\Gamma$ obtained by resolving all crossings of $D$---in one of the two possible ways explaned above---is a disjoint union of closed webs. There is a unique way to assign a Laurent polynomial $\brak{\Gamma} \in \mathbb {Z} [q, q^{-1}]$ to each closed web $\Gamma$ so that it  satisfies the skein relations explained in Figure~\ref{fig:skein relations}. (We remark that these are exactly the graph skein relations for $n= 2$ given in~\cite[Figure 3]{KhR1}, where all thick edges are erased.)

\begin{figure}[ht]
$$\xymatrix@R = 2mm{
\raisebox{-10pt}{\includegraphics[height=.35in]{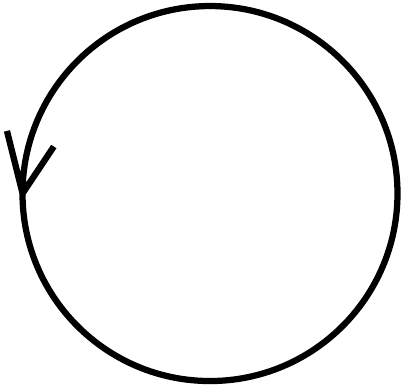}}\,\,\, = \,\,\, q + q^{-1}\hspace{1cm}
\raisebox{-30pt}{\includegraphics[height=.9in]{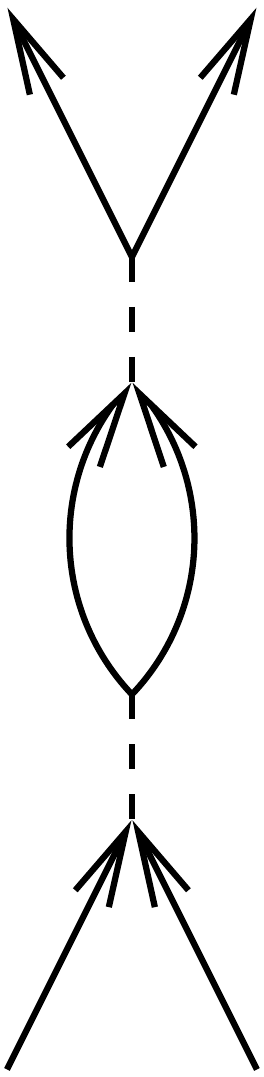}}\, = \,\,(q+ q^{-1})\,\raisebox{-30pt}{\includegraphics[height=.9in]{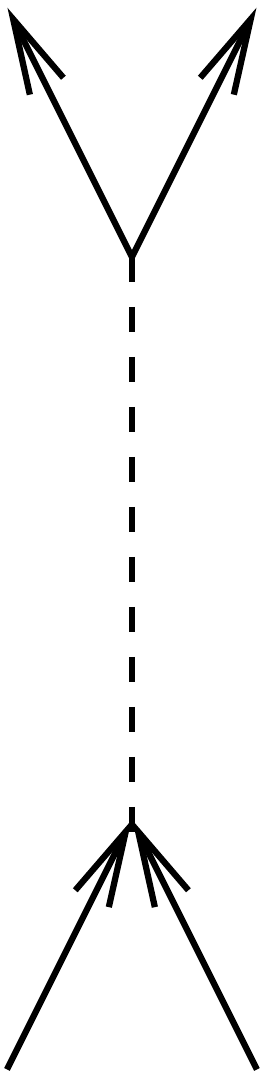}} \\ \raisebox{-15pt}{\includegraphics[height=.5in]{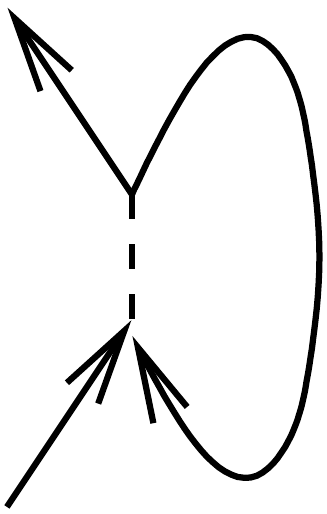}} = \raisebox{-15pt}{\includegraphics[height=.5in]{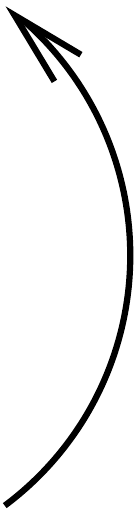}} \hspace{2cm}\raisebox{-15pt}{\includegraphics[height=.5in]{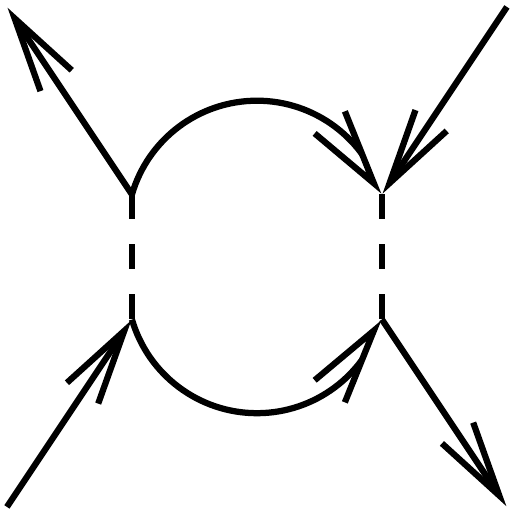}}\,\, =\,\, \raisebox{-15pt}{\includegraphics[height=.45in]{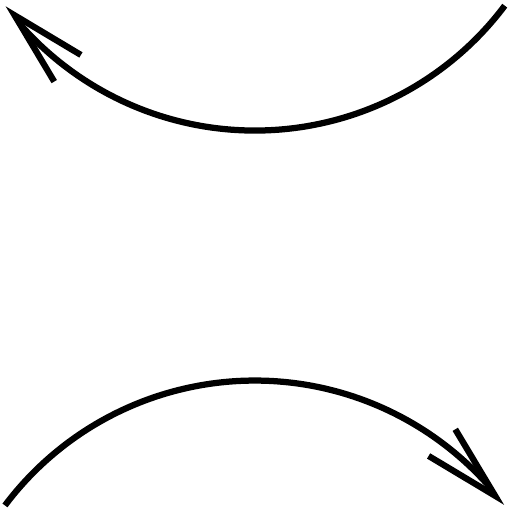}} \\
\raisebox{-30pt}{\includegraphics[height=.9in]{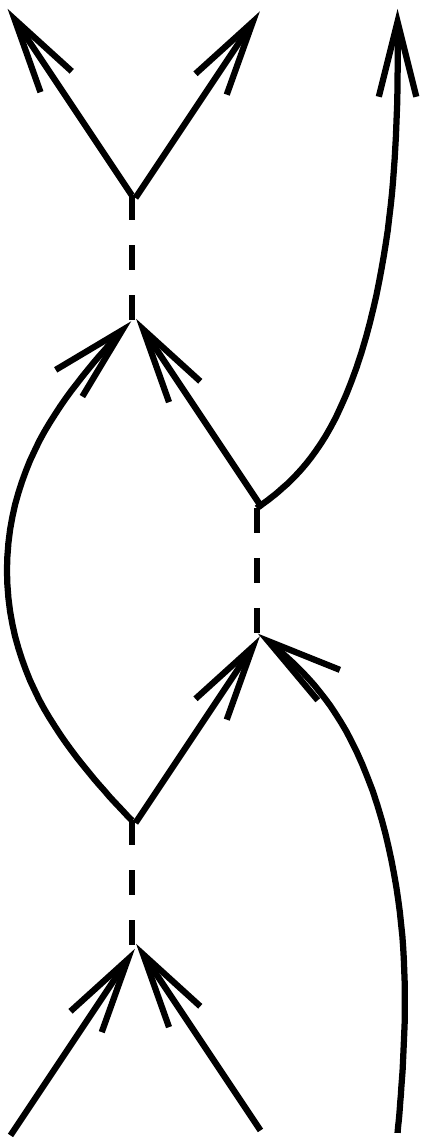}}\,\, +\,\, \raisebox{-30pt}{\includegraphics[height=.9in]{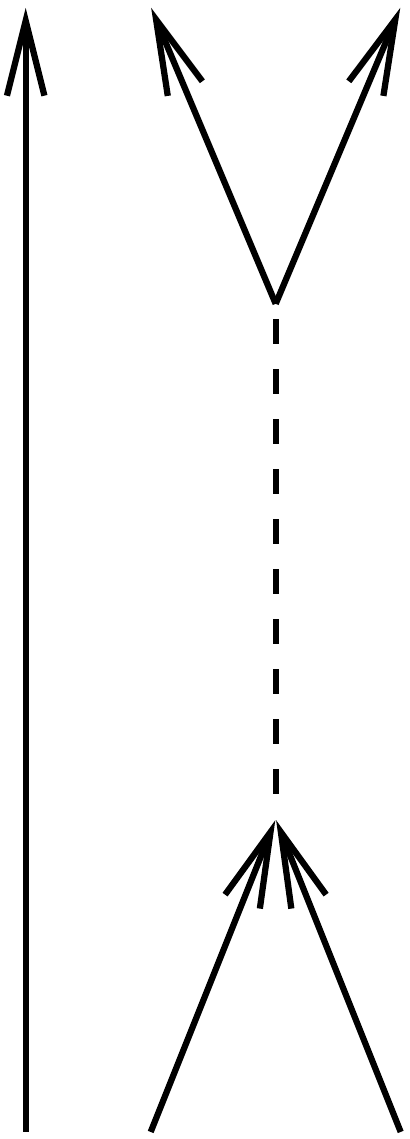}} \,\,=\,\, \raisebox{-30pt}{\includegraphics[height=.9in]{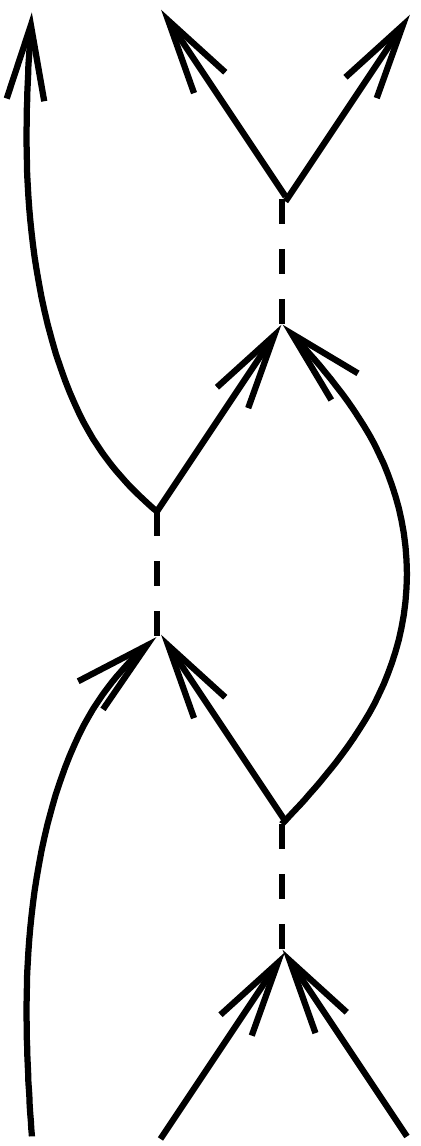}}\,\, + \,\,\raisebox{-30pt}{\includegraphics[height=.9in]{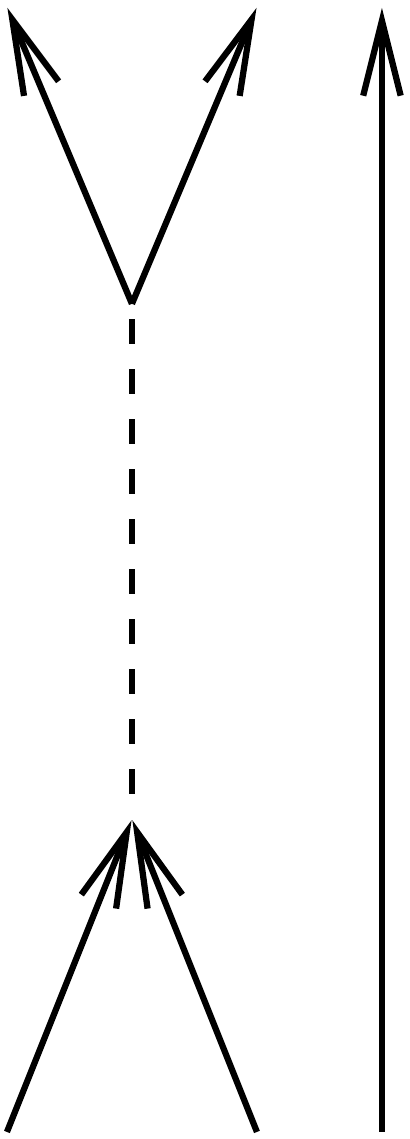}} 
}$$
\caption{Skein relations}
\label{fig:skein relations}
\end{figure}

The \textit{bracket} of $D$ is defined by $\brak{D} = \sum_{\Gamma} \pm q^{\alpha(\Gamma)}\brak{\Gamma},$ where the sum is over all resolutions of $D$ and $\alpha(\Gamma)$ is determined by the rules in Figure~\ref{fig:decomposition of crossings}.

\begin{figure}[ht]
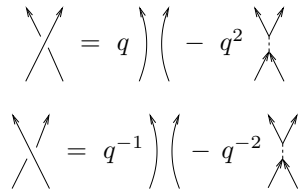

$$\xymatrix@R=2mm{
\raisebox{-12pt}{\includegraphics[height=0.4in]{poscros.pdf}}\,\, = \,\,q\,\,\raisebox{-12pt} {\includegraphics[height=0.4in]{orienresol.pdf}}\,\, -\,\, q^2\,\,\raisebox{-12pt} {\includegraphics[height=0.4in]{singresol.pdf}}\\
\raisebox{-12pt}{\includegraphics[height=0.4in]{negcros.pdf}} \,\,=  \,\,q^{-1}\,\raisebox{-12pt} {\includegraphics[height=0.4in]{orienresol.pdf}}\, -\,q^{-2} \,\raisebox{-12pt} {\includegraphics[height=0.4in]{singresol.pdf}}
}$$
\caption{Decomposition of crossings}
\label{fig:decomposition of crossings}
\end{figure}

It is well know that $\brak{D_1} = \brak{D_2}$ whenever $D_1$ and $D_2$ are related by a Reidemeister move, hence $\brak{L}:= \brak{D}$ is an invariant of the oriented link $L.$ Excluding the rightmost terms in Figure~\ref{fig:decomposition of crossings}, we obtain the skein relation $$q^2 \raisebox{-8pt}{\includegraphics[height=.32in]{negcros.pdf}} - q^{-2} \raisebox{-8pt}{\includegraphics[height=.32in]{poscros.pdf}} = (q - q^{-1}) \raisebox{-8pt}{\includegraphics[height=.32in]{orienresol.pdf}}$$ which yields the quantum $sl(2)$-polynomial of the link $L$ (thus the unnormalized Jones polynomial---its categorification is introduced in~\cite{Kh1}).

\subsection{From webs to matrix factorizations}

We mimic Khovanov's and Rozansky's work in ~\cite{KhR1} for $n =2$ case, by replacing the polynomial $p(x) = x^3$ with $p(a, h, x) = x^3 -\frac{3}{2} h x^2- 3ax,$ where $a$ and $h$ are formal variables. Note that $p(a, h, x)$ was chosen in such a way that $ \frac{1}{3} \partial p/ \partial x = x^2 -hx -a.$  We assume that the reader is somewhat familiar with~\cite{KhR1}, and we only briefly recall some of its content.

Given a graded ring $R$ (we consider only polynomial rings) and a homogeneous element $\omega \in R,$ the category $\textit{mf}_{\omega}$ of graded matrix factorizations with potential $\omega$ has objects  2-periodic chains
\[ M_0 \stackrel{d_0}{\rightarrow} M_1 \stackrel{d_1}{\rightarrow} M_0, \] 
where $M_0, M_1$ are graded free $R$-modules and $d_0, d_1$ are homomorphisms such that $d_0d_1 = \omega = d_1d_0.$ We call such an object an $(R, \omega)$-factorization. Morphisms in $\textit{mf}_{\omega}$ are degree-preserving maps of $R$-modules $M_0 \to N_0, M_1 \to N_1$ that commute with differentials. In this paper, the morphisms $d_0$ and $d_1$ have degree $3$ (thus $\deg(\omega) = 6$) and a homotopy has degree $-3.$ 

The category $\textit{hmf}_\omega$ of graded matrix factorizations up to chain homotopies has the same objects as $\textit{mf}_\omega$ but fewer morphisms, as homotopic morphisms in $\textit{mf}_\omega$ are declared the same in $\textit{hmf}_\omega.$ We denote by $\{r\}$ the grading shift up by $r,$ and by $\brak{s}$ the cohomological shift functor for matrix factorizations. Specifically, $M \brak{1}$ has the form $M_1 \stackrel {-d_1}{\rightarrow} M_0 \stackrel{-d_0}{\rightarrow} M_1.$ The cohomological shift functor for chain complexes is denoted by $[s].$ 

Given a pair of elements $a_1, b_1 \in R,$ we denote by $(a_1, b_1)$ the factorization with potential $a_1b_1$
\[R \stackrel{a_1}{\longrightarrow} R \{3 -\deg a \} \stackrel{b_1}{\longrightarrow} R, \]
where $a_1$ and  $b_1$ act on $R$ by multiplication. The middle $R$ was shifted so that the differentials would have degree $3$. Given a finite set of pairs $(a_i, b_i), 1 \leq i \leq n$, we denote their tensor product over $R$ by $(\textbf{a, b}) : = \otimes_i(a_i, b_i)$, where $\textbf{a} = (a_1, \cdots, a_n)$ and $ \textbf{b} = (b_1,\cdots,  b_n).$ Note that $(\textbf{a, b})$ is the factorization with potential $\omega = \sum_i a_i b_i,$ and we sometimes prefer to write it in the \textit{Koszul matrix} form:
\[ (\textbf{a, b}) = \left ( \begin{array}{cc} a_1 & b_1 \\ a_2 & b_2 \\ \cdots & \cdots \\ a_n & b_n \end{array} \right ). \]
Let $c \in R$ and denote by $[ij]_c$ the \textit{elementary row operation} which transforms
\[ \left ( \begin{array}{cc} a_i & b_i \\ a_j & b_j \end{array} \right ) \stackrel{[ij]_c}{\longrightarrow}  \left ( \begin{array}{cc} a_i+ca_j & b_i \\ a_j & b_j -cb_i\end{array} \right )\]
 and leaves the remaining rows of the Koszul matrix unchanged. An elementary row operation corresponds to a change of the basis vector of the free $R$-module underlying the corresponding factorization, thus it takes a Koszul factorization $(\textbf{a, b})$ to an isomorphic factorization in $\textit{hmf}_\omega.$

As noted by Rasmussen~\cite{Ras2}, if $b_i$ and $b_j$ are relatively prime, we can apply a \textit{twist} via the map $R \to R$ which sends $x \to kx,$ for some $k \in R,$ and obtain an isomorphism of factorizations
 \[ (a_i, b_i) \otimes (a_j, b_j) \cong (a_i + k b_j, b_i) \otimes (a_j - kb_i, b_j).\]

Using Koszul matrix form, the above isomorphism is written as
 \[ \left ( \begin{array}{cc} a_i & b_i \\ a_j & b_j \end{array} \right ) \stackrel{\cong}{\longrightarrow}  \left ( \begin{array}{cc} a_i + kb_j & b_i \\ a_j - kb_i & b_j \end{array} \right).\]
 
Starting with a link or tangle diagram $D,$ we put marks on each of its arcs. Resolving all of its crossings as explained in Figure~\ref{fig:resolutions}, we obtain web diagrams $\Gamma$ with marked arcs, and we denote by $m(\Gamma)$ the set of all marks of $\Gamma.$ We associate to each mark $i$ the polynomial $p(a, h, x_i) = x_i^3 -\frac{3}{2}h x_i^2 -3ax_i.$ When working with a tangle diagram, each of its resolutions has internal and external marks, where the later ones are the boundary points $i \in B$ of the tangle diagram. 

Let $R = \mathbb{Q}[a,h, x_i], i \in m(\Gamma),$ be the polynomial ring with rational coefficients and variables $a, h$ and $x_i$ (over all marks $i$), and let $R'$ be its subring $\mathbb{Q}[a,h, x_i], i \in B.$ We introduce a grading on $R$ and $R'$ by letting $\deg(x_i) = 2,  i \in m(\Gamma), \deg(a) = 4$ and $\deg(h) =2.$ To a web diagram $\Gamma$ we assign a graded factorization $\overline{C}(\Gamma)$ with (degree 6) homogeneous potential $\omega(\Gamma) = \sum_{i \in B} o(i)p(a, h, x_i),$ where $o(i) \in \{\pm1\} $ are ``orientations'' of boundary points, given by the orientation of $\Gamma$ at these points. 

To an oriented arc $l$ between two neighboring marks $i, j$ and oriented from $i$ to $j$ we assign the factorization $\overline{L}_i^j$ over the ring $R = \mathbb{Q}[a, h, x_i, x_j],$ and with potential $$\omega (\overline{L}_i^j)= p (a, h, x_j) -p (a, h, x_i) = x_j^3 -x_i^3 -\frac{3}{2} h (x_j^2 -x_i^2) -3a (x_j -x_i).$$
 \[\overline{L}_i^j \co \quad \mathbb{Q}[a,h,x_i,x_j]\stackrel{\overline{\pi}_{ij}}{\longrightarrow} \mathbb{Q}[a,h,x_i,x_j]\{-1\} \stackrel{x_j-x_i}{\longrightarrow}  \mathbb{Q}[a,h,x_i,x_j] \]

where $\overline{\pi}_{ij} =\displaystyle\frac {\omega(\overline{L}_i^j)}{x_j-x_i} = x_i^2 + x_ix_j + x_j^2 -\frac{3}{2}h(x_i + x_j) -3a$.

To an oriented circle with one mark $i$ we assign the factorization $\overline{L}_i^i,$ which is the quotient of $\overline{L}_i^j$ by the relation $x_j = x_i.$ We obtain a 2-periodic chain complex of $\mathbb{Q}[a,h,x_i]$-modules
  \[
\mathbb{Q}[a,h,x_i]\stackrel{\overline{\pi}_{ii}}{\longrightarrow} \mathbb{Q}[a,h,x_i]\{-1\} \stackrel{0}{\longrightarrow}  \mathbb{Q}[a,h,x_i], \] 
where $\overline{\pi}_{ii} = 3(x_i^2 - h x_i -a)$. This complex has cohomology only in degree 1, namely $$H^1(\overline{L}_i^i) = \mathbb{Q}[a,h,x_i]/(x_i^2 -hx_i - a) \{-1\}, \quad H^0(\overline{L}_i^i) = 0.$$ 

Let $\mathcal{A} : = \mathbb{Q}[a, h, X]/(X^2 - hX -a)$ and $\iota \co \mathbb{Q}[a, h] \to \mathcal{A}$ be the inclusion map $\iota(1) = 1.$ We identify $\mathbb{Q}[a, h,x_i]/(x_i^2 -hx_i- a)$ with $ \mathcal{A}$ by taking $x_i^k \in \mathbb{Q}[a, h, x_i]/(x_i^2 - hx_i -a)$ to $X^k \in \mathcal{A}.$ As a module over $\mathbb{Q}[a, h], \mathcal{A}$ is free with generators 1 and $X.$ We make $\mathcal{A}$ graded by giving to 1 degree $-1$ and to $X$ degree 1.

To an oriented circle without marks we associate the 2-periodic  chain complex of $\mathbb{Q}[a,h]$-modules $0 \rightarrow \mathcal{A} \rightarrow 0$ and denote it $\mathcal{A} \brak{1},$ following \cite{KhR1}. Note that $\mathcal{A}\brak{1} \cong \overline{L}_i^i$ as 2-periodic complexes of $\mathbb{Q}[a,h]$-modules, up to homotopies. The isomorphism takes $X^i \in \mathcal{A}$ to $x^i \in (\overline{L}_i^i)^1,$ for $0 \leq i \leq 1,$ and graphically it consists of adding a mark to a circle with no marks.

To diagrams $\Gamma^0$ and $\Gamma^1$ as in Figure~\ref{maps} we associate the factorizations $\overline{C}(\Gamma^0)$ and $\overline{C}(\Gamma^1)$ over the ring $R = \mathbb{Q}[a, h, x_1, x_2, x_3, x_4]$ (note that $R = R'$):
\begin{align*}
\overline{C}(\Gamma^0) &= (\overline{\pi}_{41}, x_1 -x_4) \otimes (\overline{\pi}_{32}, x_2 -x_3) \\
\overline{C}(\Gamma^1) &= (\overline{u}_{1}, x_1 + x_2 -x_3 -x_4) \otimes (\overline{u}_{2}, x_1x_2 -x_3x_4)
\end{align*}
 with $\omega(\overline{C}(\Gamma^0)) = p(a, h, x_1) + p(a, h, x_2) - p(a, h, x_3) - p(a, h, x_4) = \omega(\overline{C}(\Gamma^1))$ and 
 \begin{align*} \overline{u}_1 &= (x_1 + x_2)^2 + (x_1 + x_2)(x_3 + x_4) + (x_3 + x_4)^2 -3x_1x_2 -\frac{3}{2} h( x_1+x_2+x_3 + x_4) -3a \\ \overline{u}_2 &= -3(x_3+x_4) +3h.\end{align*} 
 Note that \begin{align*} \omega &= \overline{\pi}_{41}(x_1-x_4) + \overline{\pi}_{32}(x_2-x_3) \\ &= \overline{u}_1\cdot (x_1+x_2 -x_3 -x_4) + \overline{u}_2 \cdot (x_1x_2-x_3x_4).\end{align*} Precisely we have
 \[
\overline{C}(\Gamma^0)\co \quad
 \left( \begin{array}{c}
 R \\ R\{-2\}
 \end{array}\right) \stackrel{P_0}{\longrightarrow}
  \left( \begin{array}{c}
  R\{-1\} \\ R\{-1\}
  \end{array} \right) \stackrel{P_1}{\longrightarrow}
  \left(\begin{array}{c}
  R \\ R\{-2\} \end{array} \right)
 \]
 
 \[P_0 = 
 \left(\begin{array}{cc}
\overline{\pi}_{41}  & x_2-x_3 \\ 
 \overline{\pi}_{32}  & x_4-x_1\end{array}
 \right),  \quad P_1 = 
 \left(\begin{array}{cc}
 x_1 - x_4 & x_2-x_3 \\ 
\overline {\pi}_{32}  & -\overline{\pi}_{41} \end{array} \right)
 \]
 
 \[ \overline{C}(\Gamma^1)\co \quad
 \left( \begin{array}{c}
 R\{-1\} \\ R\{-1\}\end{array}
 \right) \stackrel{Q_0}{\longrightarrow}
 \left(\begin{array}{c}
 R\{-2\} \\ R\end{array}
 \right) \stackrel{Q_1}{\longrightarrow} \left(
 \begin{array}{c} R\{-1\} \\ R\{-1\} \end{array} \right)
 \]
 
 \[Q_0 = 
 \left( \begin{array}{cc}
\overline {u}_1  & x_1x_2 - x_3x_4 \\ 
\overline {u}_2 & x_3 +  x_4- x_1 - x_2 \end{array}
 \right), \quad Q_1 =
 \left(\begin{array}{cc}
 x_1 + x_2 - x_3 - x_4 & x_1 x_2 - x_3x_4 \\ 
 \overline{u}_2 & - \overline{u}_1 \end{array}\right)
 \]

\begin{figure}[ht]
\includegraphics[height=1in]{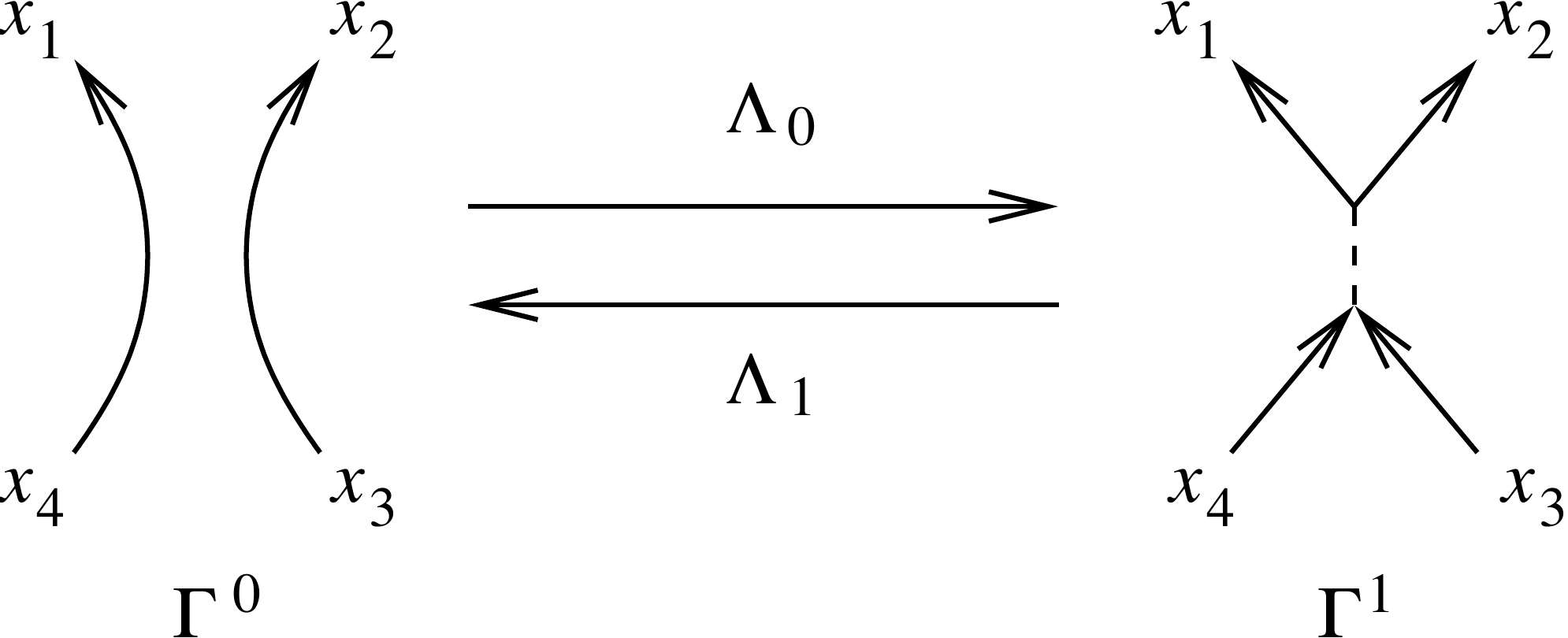} 
\caption{Webs $\Gamma^0$ and $\Gamma^1$}
\label{maps}
\end{figure}

Written in Koszul matrix form, $\overline{C}(\Gamma^0) = (\mathbf{a}, \mathbf{b})$ and $\overline{C}(\Gamma^1) = ( \mathbf{c}, \mathbf{d})\{-1\}$, where
\[(\mathbf{a}, \mathbf{b}) =
 \left(\begin{array}{cc}
\overline{\pi}_{41}  & x_1-x_4 \\ 
\overline{\pi}_{32}  & x_2-x_3\end{array} 
\right) \quad (\mathbf{c}, \mathbf{d}) = 
\left( \begin{array}{cc}
\overline{u}_1  & x_1+x_2-x_3-x_4 \\ \overline{u}_2 & x_1x_2-x_3x_4\end{array}\right),\]
and where the shift $\{-1\}$ in $\overline{C}(\Gamma^1)$ is applied to the second row 
$$R \stackrel {\overline{u}_2}{\longrightarrow} R\{1\} \stackrel{x_1x_2-x_3x_4}{\longrightarrow} R.$$
We shifted the degrees of $R$ so that each differential above has degree 3.

Finally, we define the $\overline{C}(\Gamma)$ as the tensor product of $\overline{C}(\Gamma^1)$ over all singular resolution, of $\overline{L}_i^j$ over all arcs $l,$ and of $\mathcal{A} \brak{1}$ over all loops with no mark. The tensor product is considered over appropriate rings, so that $\overline{C}(\Gamma)$ is a free module of finite rank over $R$, and we treat it as a graded factorization---with infinite rank---over the subring $R'.$ 

\begin{lemma}
Given any web diagram $\Gamma,$ its associated factorization $\overline{C}(\Gamma)$ lies in $\textit{hmf}_\omega.$ Moreover, if $\Gamma'$ is obtained from $\Gamma$ by placing a different collection of internal marks then there is a canonical isomorphism $\overline{C}(\Gamma') \cong \overline{C}(\Gamma)$ in $\textit{hmf}_\omega.$
\end{lemma}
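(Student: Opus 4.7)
The plan is to handle the two claims in turn. For the assertion that $\overline{C}(\Gamma)$ lies in $\textit{hmf}_\omega$, I would first verify that each elementary building block used in its construction is itself a graded matrix factorization with differentials of degree $3$ and with potential equal to the signed sum of $p(a,h,x_i)$ over its boundary marks. For the arc factorization $\overline{L}_i^j$ this is immediate from the identity $\overline{\pi}_{ij}(x_j-x_i)=p(a,h,x_j)-p(a,h,x_i)$ and the $\{-1\}$ shift inserted in the middle term; for $\overline{C}(\Gamma^1)$ this is the identity for $\omega$ recorded in the excerpt together with the shift of the second Koszul row; and for the marked-loop factor $\mathcal{A}\brak{1}$ it is by direct inspection. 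Tensoring these blocks over the rings corresponding to their shared internal marks preserves the matrix factorization structure and adds the potentials. Because every internal mark $i$ is the endpoint of exactly two oriented pieces---once as a head and once as a tail---its contributions $+p(a,h,x_i)$ and $-p(a,h,x_i)$ cancel, leaving only the boundary potential $\omega(\Gamma)=\sum_{i\in B}o(i)\,p(a,h,x_i)$. Homogeneity, the degree-$3$ condition on the differentials, and freeness of finite rank over $R$ are then immediate, so $\overline{C}(\Gamma)$ is indeed an object of $\textit{hmf}_\omega$.

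For the second claim I would reduce invariance to a single model computation: inserting an internal mark $k$ on an oriented arc from $i$ to $j$ replaces the factor $\overline{L}_i^j$ by $\overline{L}_i^k\otimes\overline{L}_k^j$, whose Koszul matrix is
\[ \left(\begin{array}{cc} \overline{\pi}_{ik} & x_k-x_i \\ \overline{\pi}_{kj} & x_j-x_k \end{array}\right). \]
Applying the elementary row operation $[12]_{-1}$ from the excerpt converts this into
\[ \left(\begin{array}{cc} \overline{\pi}_{ik}-\overline{\pi}_{kj} & x_k-x_i \\ \overline{\pi}_{kj} & x_j-x_i \end{array}\right), \]
which is isomorphic to the original in $\textit{hmf}_\omega$. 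Since the $(1,2)$-entry $x_k-x_i$ is linear in $x_k$ with unit coefficient, the standard excluding-a-variable argument for matrix factorizations (the analogue of \cite[Proposition~8]{KhR1}) identifies this tensor product up to homotopy with the factorization obtained by deleting the first row and substituting $x_k\mapsto x_i$ in the surviving row. Under this substitution $\overline{\pi}_{kj}$ becomes $\overline{\pi}_{ij}$, so the surviving row is $(\overline{\pi}_{ij},\,x_j-x_i)$, which is precisely $\overline{L}_i^j$. Any two mark placements on $\Gamma$ are connected by finitely many such insertion/deletion moves, and composing the resulting model isomorphisms gives the desired $\overline{C}(\Gamma')\cong\overline{C}(\Gamma)$.

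The hard part will be the word \emph{canonical}: one must check that the composite isomorphism does not depend on the sequence of insertion/deletion moves used to pass between two mark configurations. This reduces to two coherences: commutativity of model isomorphisms associated with moves on disjoint arcs (immediate, since the underlying row operations and substitutions act on disjoint sets of variables) and the fact that a "delete then reinsert" cycle on a single arc is the identity (which follows from the explicit inverses of the row operation $[12]_{-1}$ and of the linear substitution $x_k\mapsto x_i$). The parameters $a$ and $h$ do not interfere with any of this algebra, as they are central and enter only through $\overline{\pi}$, so the argument follows the pattern of \cite{KhR1} and \cite{MV2} without modification.
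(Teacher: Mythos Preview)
Your proposal is correct and follows exactly the standard Khovanov--Rozansky argument; note that the paper itself states this lemma without proof, implicitly deferring to~\cite{KhR1}, so your write-up is in fact more detailed than what the paper provides. The key computation---reducing $\overline{L}_i^k\otimes\overline{L}_k^j$ to $\overline{L}_i^j$ via a row operation followed by excluding the internal variable $x_k$---is precisely the mechanism behind the mark-independence in~\cite{KhR1}, and the parameters $a,h$ indeed play no role in it.
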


\begin{lemma} For any disjoint union of webs $\Gamma_1 \cup \Gamma_2$ there is a canonical isomorphism in $\textit{hmf}_\omega,$ namely
$\overline{C}(\Gamma_1 \cup  \Gamma_2) \cong \overline{C}( \Gamma_1) \otimes_{\mathbb{Q}[a,h]}\overline{C}( \Gamma_2).$ In particular we have
$$\overline{C}(\Gamma \cup \raisebox{-5pt}{\includegraphics[height=.2in]{orienloop.pdf}}) \cong \overline{C}(\Gamma)\brak{1} \otimes_{\mathbb{Q}[a,h]} \mathcal{A}.$$
\end{lemma}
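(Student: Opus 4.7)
The plan is to reduce both assertions to the construction of $\overline{C}(\Gamma)$ as an iterated tensor product, together with the additivity of potentials under tensor product. Recall that $\overline{C}(\Gamma)$ is built by tensoring together the local factorizations $\overline{L}_i^j$ for each oriented arc, $\overline{C}(\Gamma^1)$ for each singular resolution, and $\mathcal{A}\brak{1}$ for each markless loop, where the tensor products are taken over whichever polynomial subrings contain the shared variables.

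First I would observe that when $\Gamma_1$ and $\Gamma_2$ are disjoint, their sets of marks, arcs, singular resolutions and loops are disjoint as well. Setting $R_k = \mathbb{Q}[a,h,x_i : i \in m(\Gamma_k)]$ for $k = 1,2$, we have
\[ R_1 \otimes_{\mathbb{Q}[a,h]} R_2 \;\cong\; \mathbb{Q}[a,h, x_i : i \in m(\Gamma_1)\cup m(\Gamma_2)], \]
and the iterated tensor product defining $\overline{C}(\Gamma_1\cup\Gamma_2)$ groups unambiguously into the local factors belonging to $\Gamma_1$ (living over $R_1$) and those belonging to $\Gamma_2$ (living over $R_2$), with no shared variables left to tensor over other than $a$ and $h$. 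Associativity of the tensor product then yields the isomorphism
\[ \overline{C}(\Gamma_1\cup\Gamma_2) \;\cong\; \overline{C}(\Gamma_1) \otimes_{\mathbb{Q}[a,h]} \overline{C}(\Gamma_2) \]
in $\textit{hmf}_\omega$; additivity of potentials is automatic, since $\omega(\Gamma_1 \cup \Gamma_2) = \omega(\Gamma_1) + \omega(\Gamma_2)$, and this matches the rule for tensor products of matrix factorizations. Canonicity up to homotopy follows from the previous lemma applied independently to $\Gamma_1$ and $\Gamma_2$.

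For the specialization, I would take $\Gamma_2$ to be a single oriented loop without marks, so by construction $\overline{C}(\Gamma_2) = \mathcal{A}\brak{1}$. Applying the first part,
\[ \overline{C}\bigl(\Gamma \cup \raisebox{-5pt}{\includegraphics[height=.2in]{orienloop.pdf}}\bigr) \;\cong\; \overline{C}(\Gamma) \otimes_{\mathbb{Q}[a,h]} \mathcal{A}\brak{1}. \]
Since $\mathcal{A}\brak{1}$ is a 2-periodic complex with zero differential, the cohomological shift commutes through the tensor product, giving
\[ \overline{C}(\Gamma) \otimes_{\mathbb{Q}[a,h]} \mathcal{A}\brak{1} \;\cong\; \overline{C}(\Gamma)\brak{1} \otimes_{\mathbb{Q}[a,h]} \mathcal{A}, \]
which is the claimed identity.

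The main obstacle is really just bookkeeping: one has to check that the various tensor products are genuinely taken over the correct rings and that the grading shifts inside each local factor combine as expected. The only genuinely non-formal input is the homotopy equivalence $\overline{L}_i^i \simeq \mathcal{A}\brak{1}$ for a loop with a single mark, which was established just before the lemma and which needs to be shown to be compatible with tensoring against the rest of $\overline{C}(\Gamma)$; this compatibility follows from the fact that homotopy equivalences of matrix factorizations are preserved under tensor products with free modules, so the reduction to the markless-loop case goes through without difficulty.
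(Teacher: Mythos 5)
Your argument is correct. The paper states this lemma without proof (it is the $\mathfrak{sl}(2)$ analogue of a standard fact in Khovanov--Rozansky), and your reasoning supplies the expected verification: since $\overline{C}(\Gamma)$ is defined as an iterated tensor product of local factors over rings sharing only the variables $a,h$ when the marks are disjoint, the factorization splits as $\overline{C}(\Gamma_1)\otimes_{\mathbb{Q}[a,h]}\overline{C}(\Gamma_2)$ with potential $\omega(\Gamma_1)+\omega(\Gamma_2)$, and the specialization to a markless loop follows directly from the definition $\overline{C}(\text{markless loop})=\mathcal{A}\brak{1}$ together with commuting the cohomological shift through the tensor product. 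One small remark: your final paragraph about reducing a marked loop to $\mathcal{A}\brak{1}$ is not actually needed for the displayed special case, since the loop there is markless by hypothesis and hence gives $\mathcal{A}\brak{1}$ by fiat; that reduction is really the content of the preceding lemma on mark-independence.
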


Note that multiplication by $\partial_i \omega : = \partial \omega / \partial {x_i}$ endomorphism of $\overline{C}(\Gamma)$ is homotopic to zero. Moreover, multiplication by any polynomial in $(\textbf{a}, \textbf{b})$ induces a null-homotopic endomorphism of $\overline{C}(\Gamma)$ (see~\cite[Proposition 2]{KhR1}).   

\textit{Excluding a variable.} An important tool introduced in~\cite{KhR1} is the process of ``excluding a variable''. Supposed that $x_i$ is one of the generators of the polynomial ring$R$ and that $\omega = \sum a_i b_i \in R',$ where $R = R'[x_i].$ We say that $x_i$ is an \textit{internal} variable. Any $(R, \omega)$-factorization $(\textbf{a, b})$ restricts to an infinite rank factorization over $R'$, and we denote it by $(\textbf{a, b})'.$ Suppose furthermore that for some $j,$ $b_j= x_i -\alpha$ where $\alpha \in R'.$  Denote by $(\textbf{a'}, \textbf{b'})$ the factorization over $R'$ obtained from $(\textbf{a, b})$ by removing the $j$-th row and substituting $\alpha$ for $x_i$ everywhere in all other rows.
 \begin{lemma}
 Factorizations $(\textbf{a, b})'$ and $(\textbf{a'}, \textbf{b'})$ are isomorphic in the homotopy category of $(R', \omega)$-factorizations. \end{lemma}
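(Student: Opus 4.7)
The plan is to reduce $(\textbf{a},\textbf{b})$ to a tensor product of $(\textbf{a}',\textbf{b}')$ with a visibly contractible $R'$-factorization, using only the isomorphism-preserving operations (row operations and twists) recalled above. Assume without loss of generality that $j=1$, so the first row reads $(a_1,x_i-\alpha)$. For each $k\geq 2$ I would write $a_k = a_k^{(0)} + (x_i-\alpha)\,a_k^{(1)}$ and $b_k = b_k^{(0)} + (x_i-\alpha)\,b_k^{(1)}$ with $a_k^{(0)} = a_k|_{x_i=\alpha}$ and $b_k^{(0)} = b_k|_{x_i=\alpha}$ in $R'$. First apply the row operation $[1k]_{b_k^{(1)}}$ to replace $b_k$ by $b_k^{(0)}$ (modifying $a_1$), then apply the twist with parameter $-a_k^{(1)}$ to replace $a_k$ by $a_k^{(0)}$ (again modifying $a_1$). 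After doing this for every $k\geq 2$, rows $2,\dots,n$ have entries entirely in $R'$ and form $(\textbf{a}',\textbf{b}')$, while the first row has the shape $(\tilde a_1, x_i-\alpha)$ for some $\tilde a_1\in R$.

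Since the total potential equals $(x_i-\alpha)\tilde a_1 + \sum_{k\geq 2} a_k^{(0)}b_k^{(0)}$ and both $\omega$ and the sum lie in $R'$, the element $(x_i-\alpha)\tilde a_1$ lies in $R'$; expanding $\tilde a_1$ in powers of $x_i-\alpha$ over $R'$ then forces $\tilde a_1=0$. Hence, viewed as $R'$-factorizations,
\[(\textbf{a},\textbf{b})' \;\cong\; (0,\,x_i-\alpha)\otimes_{R'}(\textbf{a}',\textbf{b}'),\]
where $(0,x_i-\alpha)$ has trivial potential. The final step is to show that $(0,x_i-\alpha)$ is $R'$-homotopy equivalent to $R'$ placed in cohomological degree $0$: the chain map $\pi$ given by evaluation $\mathrm{ev}_\alpha\co R\to R'$ in degree $0$ and $0\co R\to 0$ in degree $1$, together with the inclusion $\iota\co R'\hookrightarrow R$ in degree $0$, satisfy $\pi\iota=\id_{R'}$, while $\iota\pi\simeq \id$ via the homotopy $h_0(f)=(f-f(\alpha))/(x_i-\alpha)$, $h_1=0$, which is well-defined because $x_i-\alpha$ is a non-zero-divisor in $R=R'[x_i]$. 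Tensoring this equivalence with $(\textbf{a}',\textbf{b}')$ over $R'$ concludes the proof.

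I expect the main obstacle to be carrying out the row/twist reduction in the degenerate case where some $b_k^{(0)}$ vanishes, so that $b_k^{(0)}$ and $x_i-\alpha$ cease to be coprime and the twist's hypothesis fails; this can be addressed by inserting an auxiliary row operation that first moves $a_k$ to something whose $b$-component has nonzero constant term in $x_i-\alpha$, or by replacing the twist step by the explicit change of basis realizing it. The remainder of the argument, once the reduction is in place, is a routine Koszul-homology contraction.
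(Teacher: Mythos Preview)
The paper does not actually prove this lemma; it is stated without proof, being borrowed from~\cite{KhR1} (where it appears as the ``excluding a variable'' technique). So there is no paper-proof to compare against, only the correctness of your argument to assess.

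Your argument is correct in substance. The sequential row-operation/twist reduction does bring $(\textbf{a},\textbf{b})$ into the form $(\tilde a_1,x_i-\alpha)\otimes_R\bigl(R\otimes_{R'}(\textbf{a}',\textbf{b}')\bigr)$, and your potential computation forcing $\tilde a_1=0$ is valid since $\omega\in R'$ and $\sum_{k\ge 2}a_k^{(0)}b_k^{(0)}=\omega|_{x_i=\alpha}=\omega$. The final contraction of $(0,x_i-\alpha)$ to $R'$ via evaluation/inclusion with the divided-difference homotopy is standard and correct. Two small remarks: first, with the paper's twist convention the parameter should be $+a_k^{(1)}$ rather than $-a_k^{(1)}$, an inconsequential sign. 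Second, your worry about the coprimality hypothesis on the twist is unnecessary here: the twist as written is simply a change of basis in the underlying free module and is an isomorphism of factorizations for any $k\in R$, regardless of whether $b_i$ and $b_j$ are coprime; the coprimality remark in the paper (following Rasmussen) is not needed for this application. So the ``main obstacle'' you anticipate does not actually arise.
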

 
 If $\Gamma$ is a closed web, the potential $\omega = 0$ and $\overline{C}(\Gamma)$ is a 2-periodic complex, thus we can take the cohomology of the corresponding complex. In particular, to the basic closed web with two vertices and with arcs labeled by $x_1$ and $x_2$ we assign the factorization over $\mathbb{Q}[a, h]$ with trivial potential, which is the quotient of $\overline{C}(\Gamma^1)$ by the relations $x_1 = x_4$ and  $x_2 = x_3.$ We obtain a complex with homology only in degree zero:
\[ H^0(\overline{C}(\Gamma^1)/_{x_1 = x_4,x_2 = x_3}) = \mathbb{Q}[a,h,x_1, x_2]/(\overline{u}^*_1, \overline{u}^*_2) \{-1\}, \, \text{where}\]
\begin{align*}
 \overline{u}^*_1 &= \overline{u}_1/ _{x_1=x_4, x_2 =x_3} = 3(x_1 + x_2)^2 -3x_1x_2 -3h(x_1 + x_2) -3a \\ \overline{u}^*_2 &=  \overline{u}_2/ _{x_1=x_4, x_2 =x_3} = -3(x_1 + x_2 -h).
\end{align*}
Therefore $H^0(\raisebox{-8pt}{\includegraphics[height=0.3in]{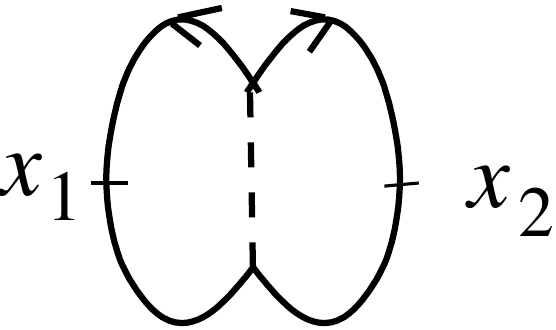}}) \cong \mathbb{Q}[a,h,x_1, x_2]/(x_1 + x_2 -h,\, x_1x_2 + a) \{-1 \},$ or equivalently
\[ H^0(\raisebox{-8pt}{\includegraphics[height=0.3in]{closed-web.pdf}}) \cong \mathbb{Q}[a,h,x_1]/(x_1^2 -h x_1- a) \{-1 \} \cong \mathcal{A} \quad \text{and} \quad H^1(\raisebox{-8pt}{\includegraphics[height=0.3in]{closed-web.pdf}}) = 0.\]

Similarly, the quotient of $\overline{C}(\Gamma^0)$ by the relations $x_1 = x_4$ and  $x_2 = x_3$ is a 2-complex with homology only in degree zero:
\[ H^0(\overline{C}(\Gamma^0)/_{x_1 = x_4,x_2 = x_3}) = \mathbb{Q}[a,h,x_1, x_2]/(\overline{\pi}^*_{41}, \overline{\pi}^*_{32}) \{-2\}, \, \text{where}\]
\begin{align*}
 \overline{\pi}^*_{41} &= \overline{\pi}_{41}/ _{x_1=x_4, x_2 =x_3} = 3(x_1^2 -h x_1 - a) \\ \overline{\pi}^*_{32} &=  \overline{u}_2/ _{x_1=x_4, x_2 =x_3} = -3(x_2^2 -h x_2 -a).
\end{align*}
Therefore $H^0(\raisebox{-8pt}{\includegraphics[height=0.3in]{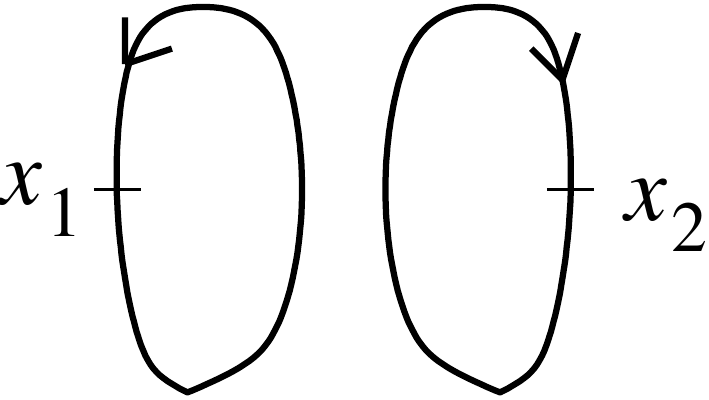}}) \cong \mathbb{Q}[a,h,x_1, x_2]/(x_1^2 +h x_1 -a,\, x_2^2 - hx_2 - a) \{-2 \}.$

Equivalently,  $H^0(\raisebox{-8pt}{\includegraphics[height=0.3in]{orcircles.pdf}}) \cong \mathcal{A} \otimes \mathcal{A} \quad \text{and} \quad H^1(\raisebox{-8pt}{\includegraphics[height=0.3in]{orcircles.pdf}}) = 0.$

 \subsection{Maps $\Lambda_0$ and $\Lambda_1$}\label{sec:maps}
The maps $\Lambda_0$ and $\Lambda_1$ between $\overline{C}(\Gamma^0)$ and $\overline{C}(\Gamma^1),$ where $\Gamma^0$ and $\Gamma^1$ are the web diagrams from Figure~\ref{maps}, are given by a pair of $2 \times 2$ matrices $(U_0, U_1)$ and $(V_0, V_1),$ respectively. 
\[U_0 = 
\left(\begin{array}{cc}x_4 - x_2 & 0 \\ x_1 - x_2 + x_3 + 2x_4 -\frac{3}{2} h & 1\end{array}\right), \quad U_1 = \left(
\begin{array}{cc}x_4 & -x_2 \\ -1 & 1\end{array}\right)
\]
 
\[V_0 = 
\left(\begin{array}{cc}1 & 0 \\ -x_1 + x_2 - x_3 - 2x_4 +\frac{3}{2}h & x_4-x_2\end{array}\right), \quad V_1 = \left(
\begin{array}{cc}1 & x_2 \\ 1 & x_4\end{array}\right)
\]
 
The compositions $\Lambda _0 \Lambda_1$ and$\Lambda_1 \Lambda_0$ are homotopic to the multiplication by $x_4 - x_2$ endomorphism of $\overline{C} (\Gamma^0)$ and $\overline{C}(\Gamma^1),$ respectively. This is easily seen from the following relations:
 \[U_0V_0 = U_1V_1 =(x_4 -x_2)\id , \quad  V_0U_0 = V_1U_1 = (x_4 -x_2) \id.\]
Thus $\Lambda_1 \circ \Lambda_0 = m(x_4-x_2)$ and $\Lambda_0 \circ \Lambda_1 = m(x_4-x_2)$. On the other hand, since the endomorphism of $\overline{C}(\Gamma^0)$---or $\overline{C}(\Gamma^1)$--- given by the multiplication by $x_1 + x_2 - x_3 -x_4$ is null-homotopic, we also have $\Lambda_1 \circ \Lambda_0 =  m(x_1 - x_3)$ and  $\Lambda_0 \circ \Lambda_1 = m(x_1 - x_3).$ Both maps $\Lambda_0$ and $\Lambda_1$ are maps of degree 1.

Considering the Koszul matrices for $\overline{C}(\Gamma^0)$ and $\overline{C}(\Gamma^1),$ we apply certain row transformation to each of them:

$ \overline{C}(\Gamma^0): \quad    \left(\begin{array}{cc}
\overline{\pi}_{41}  & x_1-x_4 \\ 
\overline{\pi}_{32}  & x_2-x_3\end{array} \right) \stackrel{[21]_{-1}}{\longrightarrow}  \left(\begin{array}{cc}
\overline{\pi}_{41}  & x_1 + x_2 -x_3 -x_4 \\ 
\overline{\pi}_{32} -\overline{\pi}_{41}  & x_2-x_3\end{array} \right) $ 

$ \overline{C}(\Gamma^1): \quad    \left(\begin{array}{cc}
\overline{u}_1  & x_1 + x_2 - x_3 -x_4 \\ 
\overline{u}_2  & x_1x_2-x_3x_4\end{array} \right) \stackrel{[12]_{x_2}}{\longrightarrow}  \left(\begin{array}{cc}
\overline{u}_1 + x_2 u_2 & x_1 + x_2 -x_3 -x_4 \\ 
\overline{u}_2 &(x_4 - x_2)(x_2-x_3)\end{array} \right).$  

We apply further a twist to obtain 
\[ \overline{C}(\Gamma^0) \stackrel {\cong}{\longrightarrow} \left(\begin{array}{cc}
\overline{\pi}_{41} -k (x_2-x_3)  & x_1 + x_2 -x_3 -x_4 \\ 
\overline{\pi}_{32} -\overline{\pi}_{41} + k (x_1 + x_2 -x_3 -x_4 ) & x_2-x_3\end{array} \right)\]

\[\overline{C}(\Gamma^1) \stackrel{\cong}{\longrightarrow} \left(\begin{array}{cc}
\overline{u}_1 + x_2 u_2 + 2 (x_4 - x_2)(x_2-x_3) & x_1 + x_2 -x_3 -x_4 \\ 
\overline{u}_2 -2 (x_1 + x_2 -x_3 -x_4) & (x_4 - x_2)(x_2-x_3)\end{array} \right) \]

where $k = x_1 + x_2 + x_3 -\frac{3}{2}h.$ An easy computation shows that the Koszul matrices above, hence $\overline{C}(\Gamma^0)$ and $\overline{C}(\Gamma^1)$, have the following equivalent forms 

$ \overline{C}(\Gamma^0) \cong \left(\begin{array}{cc}
a_1  & x_1 + x_2 -x_3 -x_4 \\ 
a_2 (x_4 -x_2) & x_2-x_3\end{array} \right) \, \text{and} \,\,  \overline{C}(\Gamma^1) \cong \left(\begin{array}{cc}
a_1  & x_1 + x_2 -x_3 -x_4 \\ 
a_2 & (x_4 -x_2)(x_2-x_3)\end{array} \right) $

where $a_1 = \overline{\pi}_{41} - \overline{\pi}_{21} + \overline{\pi}_{31}$ and $a_2 = -2x_1 -2x_2 -x_3 -x_4 +3h.$ The first rows are identical and the second rows are related. Consider the \textit{flip} homomorphisms $$\psi_{x_4-x_2}\co (a_2,  (x_4-x_2)(x_2-x_3)) \longrightarrow (a_2(x_4-x_2), x_2-x_3)$$
\begin{displaymath} 
\xymatrix @C=23mm@R=13mm{
R \ar [r]^{a_2} \ar [d]_{ 1} & R \ar [d]_{x_4-x_2} \ar[r]^{(x_4-x_2)(x_2-x_3)} & R \ar [d]_1 \\
R \ar [r]^{a_2(x_4-x_2)} & R \ar [r]^{x_2-x_3}   & R}
\end{displaymath}
and $ \psi'_{x_4-x_2} \co (a_2(x_4-x_2), x_2-x_3) \longrightarrow (a_2, (x_4-x_2)(x_2-x_3))$

\begin{displaymath} 
\xymatrix @C=23mm@R=13mm{
R \ar [r]^{a_2(x_4-x_2)} \ar [d]_{ x_4-x_2} & R \ar [d]_1 \ar[r]^{x_2-x_3} & R \ar [d]_{x_4-x_2} \\
R \ar [r]^{a_2} & R \ar [r]^{(x_4-x_2)(x_2-x_3)}   & R}
\end{displaymath}
Using the equivalent Koszul matrices for $\overline{C}(\Gamma^0), \overline{C}(\Gamma^1),$ the following lemma follows. 
\begin{lemma}
$\Lambda_0 = \id \otimes \,\psi'_{x_4-x_2}$ and $\Lambda_1 = \id \otimes \,\psi_{x_4-x_2}.$
\end{lemma}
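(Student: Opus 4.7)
The plan is to translate both sides into explicit $2\times 2$ matrices on the Koszul presentations and compare. After the displayed row operations and twists,
\[
\overline{C}(\Gamma^0) \;\cong\; (a_1,\, x_1+x_2-x_3-x_4) \otimes (a_2(x_4-x_2),\, x_2-x_3),
\]
\[
\overline{C}(\Gamma^1) \;\cong\; (a_1,\, x_1+x_2-x_3-x_4) \otimes (a_2,\, (x_4-x_2)(x_2-x_3)),
\]
so the two factorizations share a first tensor factor and differ only in the second. The maps $\psi_{x_4-x_2}$ and $\psi'_{x_4-x_2}$ are by definition maps between these second factors, going in opposite directions.

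First I would spell out $\id \otimes \psi'_{x_4-x_2}$ and $\id \otimes \psi_{x_4-x_2}$ explicitly as pairs of $2 \times 2$ matrices on the tensored Koszul form. Since $\psi'_{x_4-x_2}$ acts by $x_4-x_2$ on two of the three positions of its target and by $1$ on the middle one, on the $M_0$ and $M_1$ components of the tensor product the map $\id \otimes \psi'_{x_4-x_2}$ is a diagonal matrix $\mathrm{diag}(1, x_4-x_2)$ in one $\mathbb{Z}/2$-degree and $\mathrm{diag}(x_4-x_2, 1)$ in the other; the analogous description holds for $\id \otimes \psi_{x_4-x_2}$.

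Next I would transport these matrices back along the sequence of isomorphisms that produced the Koszul forms above from the original presentations $(P_0, P_1)$ and $(Q_0, Q_1)$: the row operation $[21]_{-1}$ and the twist by $k = x_1+x_2+x_3-\tfrac{3}{2}h$ for $\overline{C}(\Gamma^0)$, and the row operation $[12]_{x_2}$ together with the twist by $-2$ for $\overline{C}(\Gamma^1)$. Each step is a concrete invertible chain isomorphism, so the composite is a change-of-basis matrix that can be written down. Conjugating $\id \otimes \psi'_{x_4-x_2}$ by the appropriate change-of-basis produces a pair of $2 \times 2$ matrices on the original presentation, which I would then compare with $(V_0, V_1)$; the pull-back of $\id \otimes \psi_{x_4-x_2}$ is to be compared with $(U_0, U_1)$.

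Equality need only hold in $\textit{hmf}_\omega$. Any discrepancy that arises from the twist by $k$ can be written as multiplication by a polynomial in the entries of the Koszul matrix $(\mathbf{a},\mathbf{b})$ and hence is null-homotopic by the remark following Lemma 2, so the corresponding off-diagonal terms in $V_0$ and $V_1$ (such as $-x_1+x_2-x_3-2x_4+\tfrac{3}{2}h$) are absorbed. The main obstacle is pure bookkeeping: carefully propagating the twist through a $2\times 2$ map and recognising leftover terms as homotopies rather than as genuine differences. A useful consistency check comes for free: on the second tensor factor one verifies directly that $\psi_{x_4-x_2} \circ \psi'_{x_4-x_2} = m(x_4-x_2)$ and $\psi'_{x_4-x_2} \circ \psi_{x_4-x_2} = m(x_4-x_2)$, which, after tensoring with $\id$, matches the already-established identities $\Lambda_1 \Lambda_0 = \Lambda_0 \Lambda_1 = m(x_4-x_2)$ coming from $U_iV_i = V_iU_i = (x_4-x_2)\id$.
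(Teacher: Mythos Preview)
Your approach is exactly what the paper has in mind: the paper's ``proof'' is simply the one-line assertion that the lemma follows from the equivalent Koszul forms derived immediately before, and your plan spells out the change-of-basis bookkeeping that makes this verification concrete. One small slip: you have the pairing reversed. Since $\Lambda_0$ is given by $(U_0,U_1)$ and $\Lambda_1$ by $(V_0,V_1)$, you should be comparing the transported $\id\otimes\psi'_{x_4-x_2}$ with $(U_0,U_1)$ and $\id\otimes\psi_{x_4-x_2}$ with $(V_0,V_1)$, not the other way around; this does not affect the argument, only the labels.
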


\subsection{Complexes of factorizations}\label{sec:complexes}
We start with a generic diagram $D$ of a tangle $T$ with boundary points $B$, and put (at least) one mark on each segment bounded by two crossings. We let $m(D)$ be the set of all marks of $D,$ and consider the polynomial ring $R = \mathbb{Q}[a, h, x_i]$ for all $i \in m(D),$ and its subring $R' = \mathbb{Q}[a, h, x_i]$ for all $i \in B.$   

We associate to each crossing $p$ in $D$ a complex $\overline{C}_p$ of matrix factorizations, as explained in Figure~\ref{fig:crossings to chain complexes}, where the underlined objects are at the cohomological degree $0.$ 
  \begin{figure}[ht]
\[ \raisebox{-13pt}{\includegraphics[height=.45in]{poscros.pdf}} \,\, = \,\, \left[\,\, 0 \longrightarrow \raisebox{-13pt}{\includegraphics[height=.45in]{singresol.pdf}}\,\,\{2\} \longrightarrow \underline{\raisebox{-13pt}{\includegraphics[height=.45in]{orienresol.pdf}}\,\, \{1\}}\longrightarrow 0\,\, \right ]\]

\[ \raisebox{-13pt}{\includegraphics[height=.45in]{negcros.pdf}} \,\, = \,\, \left[\,\, 0 \longrightarrow \underline{\raisebox{-13pt}{\includegraphics[height=.45in]{orienresol.pdf}}\,\,\{-1\}} \longrightarrow\raisebox{-13pt}{\includegraphics[height=.45in]{singresol.pdf}}\,\, \{-2\}\longrightarrow 0\,\, \right ]\]
\caption{Complex applied to a crossing}
\label{fig:crossings to chain complexes}
\end{figure}

Precisely, we have
\[ \raisebox{-8pt}{\includegraphics[height=.3in]{poscros.pdf}} = [\,0 \longrightarrow \overline{C}(\Gamma^1)\{2\} \stackrel{\Lambda_1}{\longrightarrow} \overline{C}(\Gamma^0)\{1\} \longrightarrow 0\, ]\]
\[ \raisebox{-8pt}{\includegraphics[height=.3in]{negcros.pdf}} = [\, 0 \longrightarrow \overline{C}(\Gamma^0)\{-1\} \stackrel{\Lambda_0}{\longrightarrow} \overline{C}(\Gamma^1)\{-2\} \longrightarrow 0\,] \]
where $\Gamma^0, \Gamma^1$ are the oriented and the singular resolutions from Figure~\ref{fig:resolutions}, and where the matrix factorization $\overline{C}(\Gamma^0)$ is at the cohomological degree $0.$

We associate to $D$ a complex of factorizations, which we denote it by $C(D),$  and which is the tensor product of $\overline{C}_p,$ over all crossings $p$ in $D,$ of $\overline{L}_i^j,$ over all arcs $i \to j,$ and of $\mathcal{A} \brak{1},$ over all oriented loops in $D$ with no crossings and no marks. The tensoring is done over appropriate polynomial rings so that $C(D)$ is a free $R$-module of finite rank.

$C(D)$ is a complex of graded ($R, \omega$)-factorizations, where $\omega = \sum_{i \in B} o(i)p(a, h, x_i).$ We regard $C(D)$ as an object in $K_\omega : = Kom(\textit{hmf}_\omega),$ the homotopy category of complexes over $\textit{hmf}_\omega.$ Note that the differentials of $C(D)$ are grading-preserving. In Section \ref{sec:invariance} we show that  the isomorphism class of $C(D)$ is a tangle invariant.

If $T$ is a link, the set $B$ of boundary points is empty, $R' = \mathbb{Q}[a, h],$ and $C(D)$ is a complex of graded $\mathbb{Q}[a, h]$-modules. 

\subsection{Isomorphisms}\label{sec:isomorphisms}
We show that $\overline{C}(\Gamma)$ mimics the skein relations of Figure~\ref{fig:skein relations}. 
\begin{proposition}\label{isom 1}(\textit{First Isomorphism}) There is an isomorphism in $hmf_\omega$:
\[\overline{C}(\raisebox{-8pt}{\includegraphics[height=.3in]{singresclosed.pdf}}\,) \cong \overline{C}(\raisebox{-8pt}{\includegraphics[height=.3in]{arc.pdf}}\,)\brak{1}.\]
\end{proposition}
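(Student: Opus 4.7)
The plan is to express the left-hand side as $\overline{C}(\Gamma^1)$ tensored with an external oriented arc, and then reduce the resulting Koszul matrix by elementary row operations and two applications of the excluding-a-variable Lemma down to the one-row Koszul factorization of an oriented arc. The cohomological shift $\brak{1}$ will appear when a row of the Koszul matrix is trivialized and its $a$- and $b$-entries must be swapped.

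First, view the left-hand side web as one copy of $\Gamma^1$ (with boundary marks $x_1,x_2,x_3,x_4$ as in Figure~\ref{maps}) together with an external oriented arc $x_2 \to x_3$ closing off two of its boundary marks; the resulting web has boundary $\{x_1,x_4\}$. The two potentials $\omega(\Gamma^1) = p(x_1)+p(x_2)-p(x_3)-p(x_4)$ and $\omega(\overline{L}_{x_2}^{x_3}) = p(x_3)-p(x_2)$ sum to the expected boundary potential $p(x_1)-p(x_4)$. Writing the tensor product in Koszul form gives the three-row matrix
\[
\left(\begin{array}{cc}
\overline{u}_1 & x_1+x_2-x_3-x_4 \\
\overline{u}_2 & x_1 x_2 - x_3 x_4 \\
\overline{\pi}_{23} & x_3 - x_2
\end{array}\right).
\]

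The third row has $b$-entry $x_3-x_2$, so the excluding-a-variable Lemma applies with $\alpha = x_2$: substituting $x_3=x_2$ in the first two rows turns their $b$-entries into $x_1-x_4$ and $x_2(x_1-x_4)$. Apply the elementary row operation $[12]_{x_2}$ to kill the second $b$-entry, yielding
\[
\left(\begin{array}{cc}
\overline{u}_1' + x_2\,\overline{u}_2' & x_1 - x_4 \\
\overline{u}_2' & 0
\end{array}\right),
\]
where primes denote substitution $x_3=x_2$. Swap the $a$- and $b$-entries of the trivialized second row; at the level of matrix factorizations this is precisely the cohomological shift $\brak{1}$. The swapped row becomes $(0,\overline{u}_2')$; since $\overline{u}_2' = -3(x_2+x_4-h)$, this is $(0,\, x_2-(h-x_4))$ up to a unit. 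A second application of the excluding-a-variable Lemma with $\alpha = h-x_4 \in \mathbb{Q}[a,h,x_1,x_4]$ then eliminates $x_2$. The correction $x_2\,\overline{u}_2'$ in the first row vanishes at $x_2=h-x_4$, and a short computation reduces the remaining polynomial to $\overline{\pi}_{41}$. We are left with the one-row Koszul $(\overline{\pi}_{41},\, x_1-x_4) = \overline{L}_{x_4}^{x_1}$, carrying the $\brak{1}$ shift, which matches the right-hand side.

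The main obstacle will be tracking the grading shifts $\{k\}$ carefully: the $\{-1\}$ built into $\overline{C}(\Gamma^1)$, the middle-term shifts in each arc factorization, and any shift introduced by the $a \leftrightarrow b$ swap must combine so that exactly $\brak{1}$ survives in the final identification, with no leftover $\{k\}$-shift. Justifying the swap as a pure cohomological shift $\brak{1}$ (up to sign changes absorbed by the isomorphism) is where most of the care is needed.
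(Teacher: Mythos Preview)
Your proof is correct and follows essentially the same route as the paper's. The only cosmetic difference is the starting point: you realize the closed web as $\overline{C}(\Gamma^1)\otimes\overline{L}_{x_2}^{x_3}$, giving a three-row Koszul matrix, and then use one application of the exclusion lemma to reach a two-row matrix, whereas the paper writes the two-row matrix directly (by substituting the arc identification into $\overline{u}_i$ from the outset) and uses only one exclusion. After that, both proofs perform the same elementary row operation, the same $a\leftrightarrow b$ swap on the trivialized row to produce the $\brak{1}$ shift, and the same exclusion of the remaining internal variable. One small simplification you could note: the combination $\overline{u}'_1 + x_2\,\overline{u}'_2$ already equals $\overline{\pi}_{41}$ before any substitution for $x_2$, so the final exclusion acts trivially on the first row (as in the paper's computation), and your grading-shift worry at the end dissolves once you observe this.
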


\begin{proof} Consider the webs $\Gamma$ and $\Gamma_1$ given in Figure~\ref{fig:direct sum decomposition 1-webs}. Factorizations $\overline{C}(\Gamma),\overline{C}(\Gamma_1)$ are $(R', \omega)$-factorizations (the later has infinite rank over $R'$), where $R' = \mathbb{Q}[a, h, x_2,x_3]$ and $\omega = p(a, h, x_2) - p(a, h, x_3).$
\begin{figure}[h]
\[ \Gamma = \raisebox{-27pt}{\includegraphics[height=.8in]{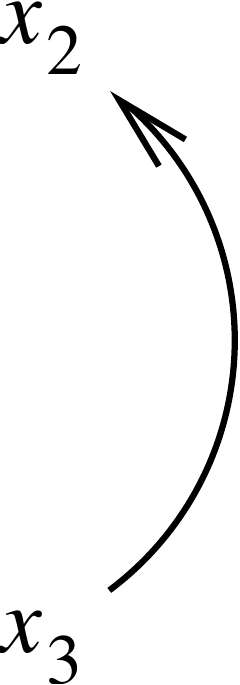}} \hspace{3cm} \Gamma_1 = \raisebox{-27pt}{\includegraphics[height=.8in]{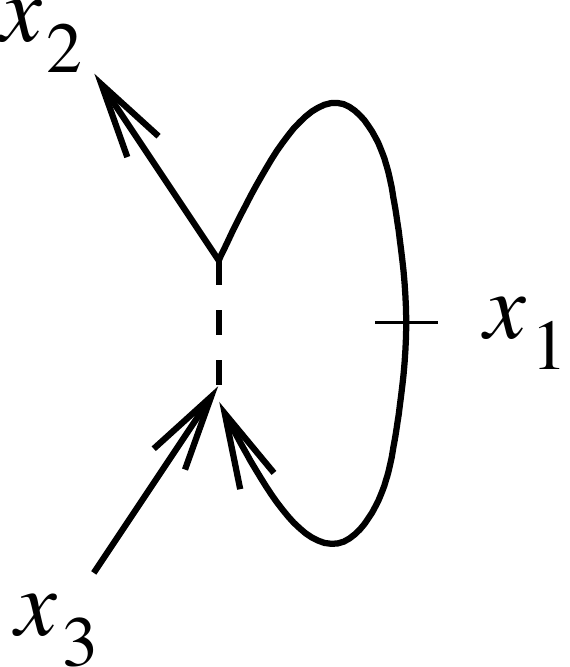}}\]
\caption{Webs $\Gamma$ and  $\Gamma_1$}
\label{fig:direct sum decomposition 1-webs}
\end{figure}

In Koszul form, $\overline{C}(\Gamma_1) = \left (\begin{array}{cc}\overline{u}'_1 & x_2-x_3 \\
\overline{u}'_2 & x_1(x_2 - x_3)  \end{array} \right)$ where $\overline{u}'_1 = \overline{u}_1(x_2, x_1,x_1, x_3)$ and $\overline{u}'_2 = \overline{u}_1(x_2, x_1,x_1, x_3).$ We apply the elementary row operation $[12]_{x_1}$ and obtain:
$$\left (\begin{array}{cc}\overline{u}'_1 & x_2-x_3 \\
\overline{u}'_2 & x_1(x_2 - x_3)  \end{array} \right) \stackrel{\cong}{\longrightarrow} \left (\begin{array}{cc}\overline{u}'_1+ x_1 \overline{u}'_2 & x_2-x_3 \\
\overline{u}'_2 & 0  \end{array} \right)$$
An easy computation shows that $\overline{u}'_1+ x_1 \overline{u}'_2= \overline{\pi}_{23}$ and $\overline{u}'_2 = -3(x_1+x_3) + 3h,$ thus we have $\overline{C}(\Gamma_1) \brak{1}\cong \left (\begin{array}{cc}\overline{\pi}_{23} & x_2-x_3 \\
0 & -\overline{u}'_2 \end{array} \right).$ Since $x_1$ is an internal variable, we can eliminate it by removing the row $(0 \quad -\overline{u}'_2) .$ Therefore, $\overline{C}(\Gamma_1) \brak {1} \cong (\overline{\pi}_{23}, \, x_2-x_3 ) = \overline{C}(\Gamma).$ \end{proof}

\begin{proposition}\label{isom 2}(\textit{Second Isomorphism})
There is an isomorphism in the category $hmf_{\omega}$:
\[\overline{C}(\raisebox{-13pt}{\includegraphics[height=.45in]{removedweb1.pdf}}) \cong \overline{C}(\raisebox{-13pt}{\includegraphics[height=.45in]{removedweb2.pdf}})\{-1\} \oplus \overline{C}(\raisebox{-13pt}{\includegraphics[height=.45in]{removedweb2.pdf}})\{1\}.\]
\end{proposition}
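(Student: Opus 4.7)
The plan is to realize $\overline{C}$ of the digon as $\overline{C}$ of the two arcs tensored over $\mathbb{Q}[a,h]$ with the algebra $\mathcal{A}$, and then split $\mathcal{A}$ as a graded free $\mathbb{Q}[a,h]$-module to obtain the shifts $\{-1\}$ and $\{+1\}$.

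First I would mark the digon carefully: place external marks $x_1,x_2$ on the two external strands and internal marks $x_5,x_6$ on the two arcs of the digon itself, so that each singular vertex contributes one copy of $\overline{C}(\Gamma^1)$ as constructed before Figure~\ref{maps}. Tensoring the two copies over the appropriate internal polynomial ring gives a Koszul factorization $\overline{C}(\Gamma_{\text{digon}})=(\mathbf{c},\mathbf{d})$ consisting of four rows; two rows record the sum/product relations at the top vertex (in $x_1,x_5,x_6$) and two record the analogous relations at the bottom vertex (in $x_2,x_5,x_6$). I would record the shift data honestly, since the stated factor $\{\pm 1\}$ on each summand must be matched at the end.

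Next I would simplify this $4$-row Koszul matrix using the elementary row operations $[ij]_c$ and the twists from the paragraph before the map $\Lambda_0,\Lambda_1$ discussion. The goal is to arrange for one of the four entries in the $\mathbf{d}$-column to become a linear polynomial of the form $x_5-\alpha$ with $\alpha\in R'[x_6]$, and then another one to become $x_6-\beta$ with $\beta\in R'$. Once that is achieved, the excluding-a-variable lemma (applied twice, first to $x_5$, then to $x_6$) strips the two internal variables and leaves a $2$-row Koszul factorization over $R'=\mathbb{Q}[a,h,x_1,x_2]$ whose potential is $p(a,h,x_1)-p(a,h,x_2)$, matching that of $\overline{C}(\text{two arcs})$.

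The surviving $2$-row matrix is then what must be identified with $\overline{C}(\text{two arcs})\otimes_{\mathbb{Q}[a,h]}\mathcal{A}$, up to some overall grading shift. I expect that after the reductions one row is exactly $(\overline{\pi}_{12},\,x_1-x_2)$, i.e.\ the factorization of the single arc, and the other row has $\mathbf{d}$-entry $0$ and $\mathbf{c}$-entry a multiple of $x_i^2-hx_i-a$ for $i\in\{1,2\}$ (this is the reason for the appearance of $\mathcal{A}$). That row is then a $2$-periodic complex whose only nonzero cohomology is $\mathbb{Q}[a,h,x_i]/(x_i^2-hx_i-a)\cong\mathcal{A}$, exactly as for the unknotted oriented loop treated earlier in the section. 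Combined with the arc row, the whole factorization is identified with $\overline{C}(\text{two arcs})\otimes_{\mathbb{Q}[a,h]}\mathcal{A}$.

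Finally, as a graded $\mathbb{Q}[a,h]$-module $\mathcal{A}$ is free of rank two with generators $1$ in degree $-1$ and $X$ in degree $+1$, so
\[
\mathcal{A}\cong \mathbb{Q}[a,h]\{-1\}\oplus\mathbb{Q}[a,h]\{1\},
\]
and distributing this splitting over the tensor product yields the claimed direct sum decomposition. The main obstacle is the middle step: keeping the $4$-row Koszul matrix under control through the row operations and twists, and in particular making sure the two internal eliminations produce exactly the factor $\mathcal{A}$ with the right internal grading shift, rather than some untwisted duplicate of the arc factorization. Careful bookkeeping of the $\{r\}$ shifts applied to the middle $R$ in each pair $(a_i,b_i)$ is what pins down the symmetric shifts $\{-1\}$ and $\{+1\}$ in the statement.
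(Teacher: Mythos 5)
The paper does not actually carry out this argument: its proof of Proposition~\ref{isom 2} is a one-line citation to ``Direct sum decomposition II'' in \cite{KhR1}. So there is no detailed proof in the paper against which to match your computation; what you are really proposing is to redo the KhR argument in the two-parameter setting. Your overall plan (tensor the two singular-vertex pieces, reduce the Koszul matrix by row operations and variable exclusion, then peel off a rank-two factor) is in the same spirit as KhR's proof, so the route is reasonable in outline.

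However, there is a genuine gap in the step where you identify the leftover row $(c,0)$ with $\mathcal{A}$. You exclude both internal marks $x_5,x_6$ first, so by the time you reach the two-row matrix you are working entirely over $R'=\mathbb{Q}[a,h,x_1,x_2]$ with $x_1,x_2$ \emph{boundary} variables, and you claim the row with $\mathbf{d}$-entry $0$ and $\mathbf{c}$-entry a multiple of $x_i^2-hx_i-a$ ``is a $2$-periodic complex whose only nonzero cohomology is $\mathcal{A}$, exactly as for the unknotted oriented loop.'' That analogy does not apply here: for the oriented loop the mark is internal (or the web is closed, $R'=\mathbb{Q}[a,h]$), so passing to cohomology is legitimate in $\textit{hmf}_\omega$; for the digon, which has boundary, $(c,0)$ with $c\in R'$ is an honest $R'$-factorization and may not be replaced by its cohomology inside $\textit{hmf}_\omega$. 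As a factorization $R'\stackrel{c}{\to}R'\{-1\}\stackrel{0}{\to}R'$ with $c$ a non-unit non-zero-divisor, $(c,0)$ is not homotopy equivalent to a free rank-two module over $\mathbb{Q}[a,h]$.

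The correct mechanism is different and needs to be spelled out: one checks that $c$ lies in the ideal generated by the other row (indeed $c\equiv\overline{\pi}_{12}\ (\mathrm{mod}\ x_1-x_2)$), so multiplication by $c$ is a null-homotopic endomorphism of $(\overline{\pi}_{12},x_1-x_2)$; tensoring with $(c,0)$ is then homotopy equivalent to tensoring with $(0,0)$, which splits as a direct sum of two shifted copies of $(\overline{\pi}_{12},x_1-x_2)$. But this split introduces a $\brak{1}$ cohomological shift on one summand (just as the First Isomorphism proof picks up a $\brak{1}$ when flipping the row $(\overline{u}'_2,0)$ to $(0,-\overline{u}'_2)$ before excluding $x_1$), and your write-up does not track $\brak{\cdot}$ at all. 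Getting the claimed symmetric internal shifts $\{-1\}$ and $\{+1\}$ with no residual $\brak{1}$ requires bookkeeping of exactly those flips and the $\{r\}$ shifts on the middle terms, which you flag as the ``main obstacle'' but do not actually resolve. Until that is done, the proposal is a plausible plan rather than a proof.
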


\begin{proof} The proof is the same as that of the ``Direct sum decomposition II'' in~\cite{KhR1}.
\end{proof}

\begin{proposition}\label{isom 3}(\textit{Third Isomorphism})
There is an isomorphism in the category $hmf_{\omega}$:
\[\overline{C}(\, \raisebox{-8pt}{\includegraphics[height=.3in]{2singresol.pdf}}\,) \cong \overline{C}(\,\raisebox{-5pt}{\includegraphics[height=.25in]{2arcs.pdf}}\,).\]
\end{proposition}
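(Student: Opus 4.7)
The strategy follows the model already used in Proposition~\ref{isom 1}: realize the left-hand side as a Koszul factorization with internal variables, use elementary row operations and twists to put the Koszul matrix into a form in which the second entries of certain rows are linear polynomials in those internal variables, and then apply the ``excluding a variable'' lemma until only the external variables remain.

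Concretely, let $\Gamma$ denote the web on the left with external boundary variables $x_1,x_2,x_3,x_4$, and label the two internal arcs between the singular vertices by variables $y_1,y_2$. By definition, $\overline{C}(\Gamma)$ is the tensor product of two copies of the factorization $\overline{C}(\Gamma^1)$, one for each pair of singular vertices, taken over the appropriate polynomial ring. Write this tensor product in Koszul matrix form as a $4\times 2$ matrix over $R=\mathbb{Q}[a,h,x_1,\dots,x_4,y_1,y_2]$; its four rows are the $(\overline{u}_1,\,\cdot\,),\,(\overline{u}_2,\,\cdot\,)$ rows coming from the top singular resolution and the analogous two rows from the bottom singular resolution, with internal edge variables $y_1,y_2$ appearing in exactly those spots determined by the orientations of the web.

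First I would apply an elementary row operation $[ij]_c$ (exactly as in the proof of Proposition~\ref{isom 1}, where the step $[12]_{x_1}$ replaced a product $x_1(x_2-x_3)$ by a single factor) to convert the ``quadratic'' Koszul entries involving the internal variables into expressions whose $b$-parts are linear in $y_1$ and $y_2$. At this point the excluding-a-variable lemma applies to each of $y_1$ and $y_2$ in turn: removing each corresponding row and substituting the value of the internal variable into the remaining entries produces a $2\times 2$ Koszul factorization over $R'=\mathbb{Q}[a,h,x_1,\dots,x_4]$. I would then carry out the polynomial simplifications, using identities analogous to $\overline{u}_1'+x_1\overline{u}_2'=\overline{\pi}_{23}$ from Proposition~\ref{isom 1}, and verify that the resulting $(\mathbf{a},\mathbf{b})$ is precisely the Koszul form of $\overline{L}_{i_1}^{j_1}\otimes \overline{L}_{i_2}^{j_2}$, the factorization of two disjoint oriented arcs.

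The main obstacle will be bookkeeping: organizing the sequence of row operations and twists in a way that uncovers a linear $y_k$-dependence cleanly in two of the rows without breaking the other two, and then matching the residual polynomials with $\overline{\pi}_{i_1 j_1}$ and $\overline{\pi}_{i_2 j_2}$ after the substitution. Because $\overline{u}_2$ is already linear in the $x$-variables while $\overline{u}_1$ is quadratic, the natural order is to first exclude a variable using the $\overline{u}_2$-row of one singular piece and then repeat for the other piece; if the initial Koszul entries do not immediately present the required $y_k-\alpha$ form, a twist of the form used in the paragraph preceding Lemma~1 (i.e., the isomorphism $(a_i,b_i)\otimes(a_j,b_j)\cong(a_i+kb_j,b_i)\otimes(a_j-kb_i,b_j)$ with suitable $k$) will produce it. Once the Koszul matrix has the expected form, the degree shifts line up automatically because both sides carry the same potential $\omega$ and the total shifts contributed by the two singular resolutions on the left are canceled by the two substitutions.
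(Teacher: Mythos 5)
Your overall strategy---realize the left-hand side as a four-row Koszul factorization over $\mathbb{Q}[a,h,x_1,\dots,x_4,y_1,y_2]$ and then exclude the two internal edge variables---is indeed what the paper does, and it succeeds. However, the preliminary elementary row operations and twists you propose are unnecessary, and in fact your analogy with the proof of Proposition~\ref{isom 1} pulls you in the wrong direction. The decisive observation, which you touch on but do not commit to, is that $\overline{u}_2$ is already linear in the edge variables: for the two singular pieces, $\overline{u}_2' = -3(x_4 + y_2) + 3h$ and $\overline{u}_2'' = -3(x_2 + y_1) + 3h$, so each $\overline{u}_2$-row of the $4\times 2$ Koszul matrix already exhibits a distinct internal variable in the required $y_k - \alpha$ form (up to the unit $-3$, and up to the harmless column swap coming from a $\langle 1\rangle$ shift). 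One therefore applies the excluding-a-variable lemma twice, immediately: delete those two rows and substitute $y_2 = h - x_4$ and $y_1 = h - x_2$ into the two $\overline{u}_1$-rows. A direct computation, not an identity involving other rows, then shows the surviving left entries are $\overline{\pi}_{21}$ and $\overline{\pi}_{43}$ while the right entries become $x_1 - x_2$ and $x_3 - x_4$, i.e., the Koszul form of the two disjoint arcs. By contrast, in Proposition~\ref{isom 1} the row operation $[12]_{x_1}$ was needed because the single internal variable appeared quadratically, as $x_1(x_2 - x_3)$, in the right column; here the two internal variables already occur linearly in the left column of separate rows, so the situation is simpler, not harder. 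Finally, the grading shifts do not ``line up automatically because both sides carry the same potential'': the $\{-2\}$ carried by the two singular resolutions is absorbed by the two rows discarded in the exclusion, and this needs to be tracked explicitly rather than asserted.
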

\begin{proof} Consider the webs $\Gamma$ and $\Gamma'$ depicted in Figure~\ref{fig:direct sum decomposition 3-webs}. Factorizations $\overline{C}(\Gamma),\overline{C}(\Gamma')$ are $(R', \omega)$-factorizations, where $R' = \mathbb{Q}[a, h, x_1, x_2, x_3, x_4]$ and $\omega = p(a, h, x_1) - p(a, h, x_2) + p(a, h, x_3) - p(a, h, x_4).$
\begin{figure}[h]
\[ \Gamma = \raisebox{-20pt}{\includegraphics[height=.6in]{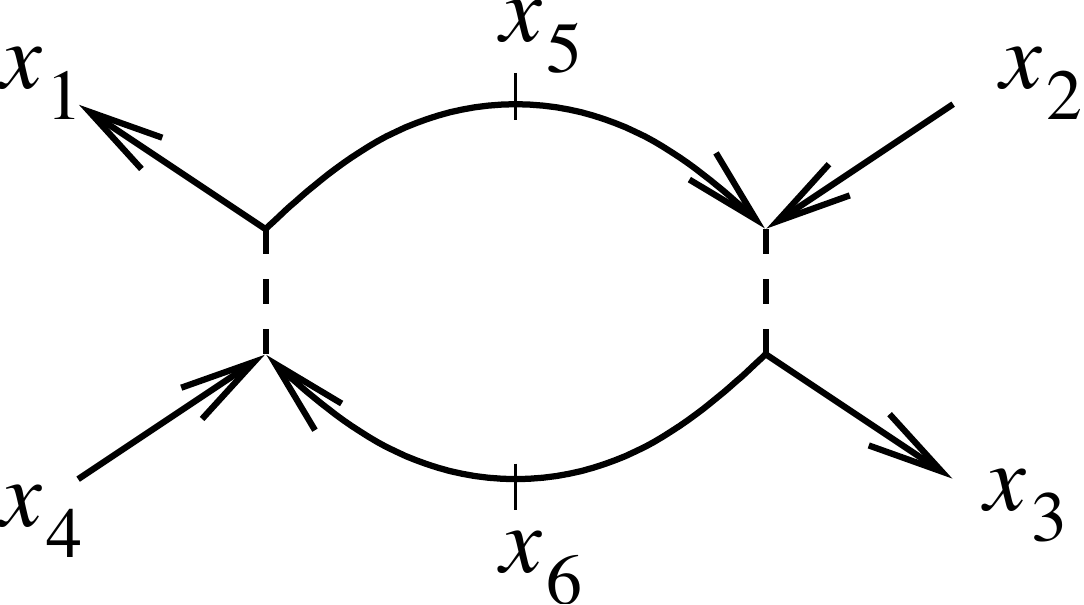}} \hspace{3cm} \Gamma' = \raisebox{-20pt}{\includegraphics[height=.6in]{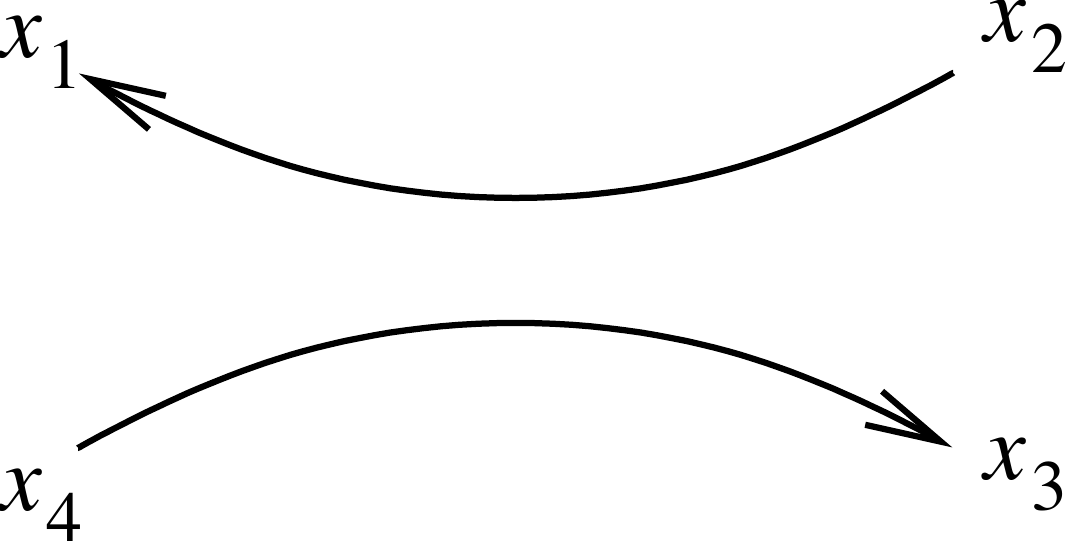}}\]
\caption{Webs $\Gamma$ and  $\Gamma'$}
\label{fig:direct sum decomposition 3-webs}
\end{figure}

In Koszul form, $\overline{C}(\Gamma) = (\textbf{a, b})\{-2\} = \left(\begin{array}{cc}\overline{u}_1'  & x_1 + x_5 -x_4 -x_6 \\ \overline {u}_2' & x_1x_5 -x_4x_6\\ \overline{u}_1'' & x_3 + x_6 -x_2 -x_5 \\  \overline{u}_2'' & x_3x_6 -x_2x_5  \end{array} \right )\{-2\},$ 

where $\overline{u}_i' = \overline{u}_i(x_1, x_5, x_6, x_4)$ and $\overline{u}_i''= \overline{u}_i(x_3, x_6, x_5, x_2), i = 1, 2.$ A shift of $\{-1\}$ corresponds to the second and fourth row. The potential $\omega$ lives in $R',$ thus $x_5$ and $x_6$ are internal variables. Knowing that $\overline{u}_2' = -3(x_4 + x_6) + 3h$ and $\overline{u}_2''= -3(x_2 + x_5) + 3h,$ we can exclude $x_5, x_6$ by crossing out the second and fourth row and replacing $x_5 = h-x_2$ and $x_6 = h-x_4$ in the first and third row of $(\textbf{a, b}).$ In particular, $\overline{C}(\Gamma)$ is isomorphic to the matrix factorization with Koszul form
$$(\textbf{c, d}) = \left(\begin{array}{cc}\overline{u}_1(x_1, h -x_2, h-x_4, x_4) & x_1 -x_2 \\ \overline{u}_1(x_3, h-x_4,h- x_2, x_2) & x_3 -x_4\end{array} \right). $$

An easy computations shows that $\overline{u}_1(x_1, h -x_2,h- x_4, x_4) = \overline{\pi}_{21}$ and $\overline{u}_1(x_3, h -x_3, h-x_2, x_2) = \overline{\pi}_{43}$, thus $\{\textbf{c, d}\} = \overline{C}(\Gamma'),$ which implies that $\overline{C}(\Gamma) \cong \overline{C}(\Gamma').$
\end{proof}

\begin{proposition}\label{isom 4}(\textit{Fourth Isomorphism})
There are isomorphisms in the category $hmf_{\omega}$:
\[\overline{C}(\, \raisebox{-15pt}{\includegraphics[height=.5in]{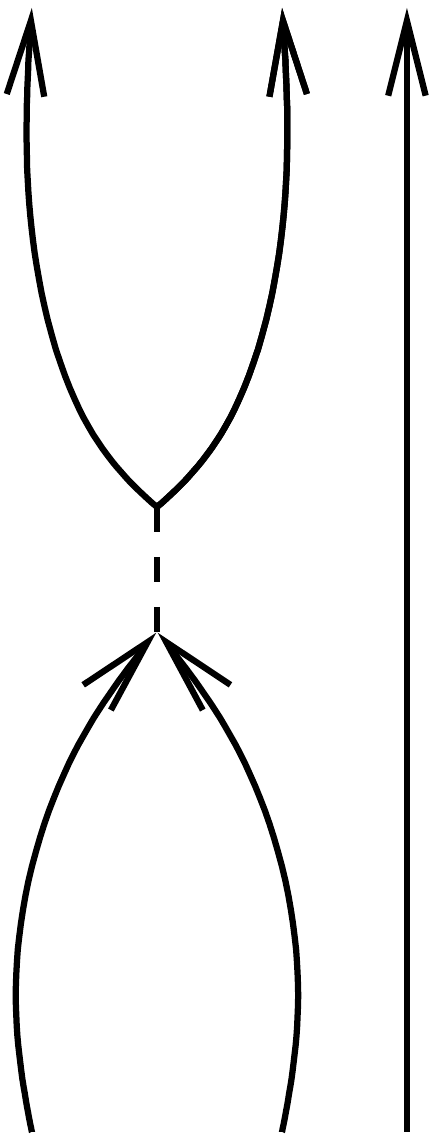}}\,) \cong \overline{C}(\,\raisebox{-15pt}{\includegraphics[height=.5in]{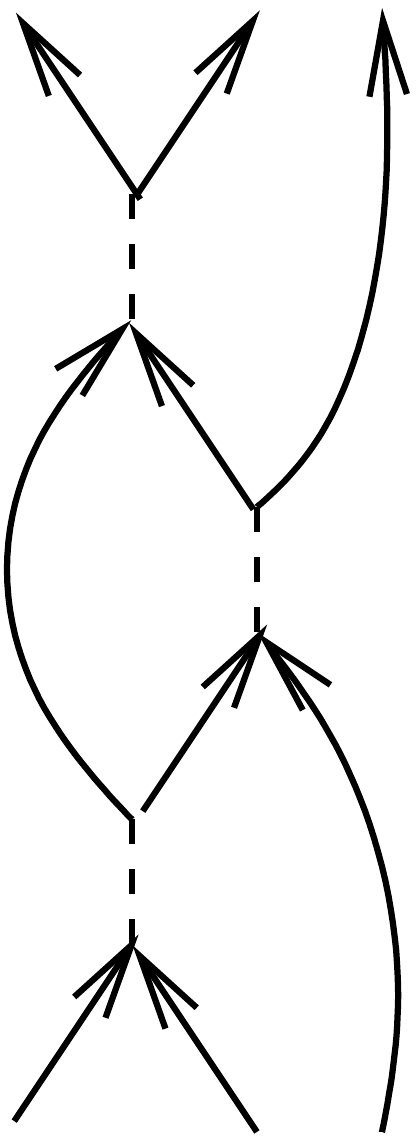}}\,) \quad \text{and} \quad \overline{C}(\, \raisebox{-15pt}{\includegraphics[height=.5in]{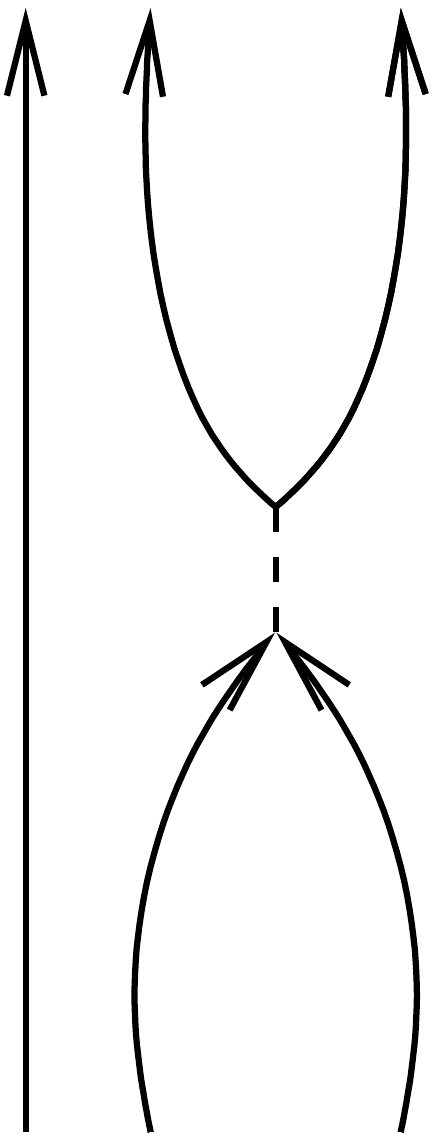}}\,) \cong \overline{C}(\,\raisebox{-15pt}{\includegraphics[height=.5in]{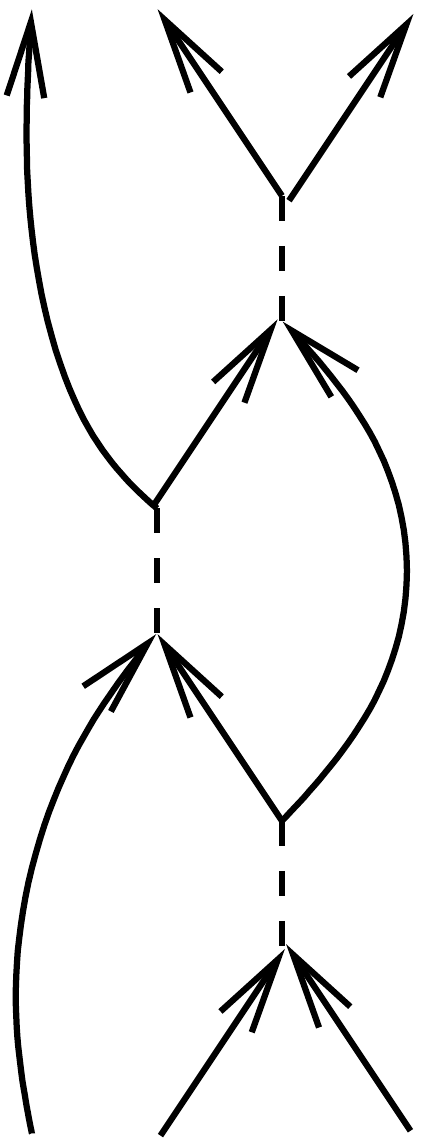}}\,).\]
\end{proposition}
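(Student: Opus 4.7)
The plan is to handle each of the two isomorphisms separately but by the same recipe used in Propositions~\ref{isom 1}--\ref{isom 3}: realize both sides as Koszul factorizations over the common polynomial ring $R'$ generated by $a,h$ and the boundary variables, then transform one into the other by a sequence of row operations, twists, and eliminations of internal variables (via the ``Excluding a variable'' lemma). The potential $\omega$ depends only on the boundary marks, which are identical on both sides of each proposed isomorphism, so the target potential agrees automatically, and it suffices to match the factorizations up to homotopy over $R'$.

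First I would place marks on every arc of all four webs, using the same labels for the boundary marks on the two sides of a given isomorphism and fresh labels $x_5,x_6,\ldots$ for the internal ones. For each side I would then write $\overline{C}(\Gamma)$ as the iterated tensor product of the building blocks: one factor $(\overline{\pi}_{ij},x_j-x_i)$ for every oriented arc $i\to j$, and one block of rows $\begin{pmatrix}\overline{u}_1 & x_1+x_2-x_3-x_4\\ \overline{u}_2 & x_1x_2-x_3x_4\end{pmatrix}$ for every singular vertex, with the appropriate degree shift. This yields Koszul matrices of shape $(\mathbf{a},\mathbf{b})$ whose rows are indexed by the arcs and the singular vertices of the web.

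Second, I would exclude the internal variables. Each singular vertex contributes a row whose $a$-entry is of the form $\overline{u}_2=-3(x_{\ast}+x_{\ast})+3h$, linear in each of its two arguments; likewise each oriented arc supplies a row $(\overline{\pi}_{ij},x_j-x_i)$ whose $b$-entry is linear in an internal variable whenever that variable is not a boundary mark. Alternating elementary row operations $[ij]_c$ with the Excluding a Variable lemma, one variable at a time, will collapse both $\overline{C}$'s to factorizations of the same rank over $R'$. For the first isomorphism of Proposition~\ref{isom 4} I expect this to reduce each side to a $2\times 2$ Koszul matrix; for the second isomorphism the reduction should give matrices whose rows differ only in the $b$-column by a factor $x_m-x_n$, which is precisely the situation handled by the flip maps $\psi_{x_m-x_n}$ and $\psi'_{x_m-x_n}$ of Subsection~\ref{sec:maps}.

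Third, once both sides are reduced to comparable Koszul form, the match follows by a final twist $(a_i,b_i)\otimes(a_j,b_j)\cong(a_i+kb_j,b_i)\otimes(a_j-kb_i,b_j)$ as in Rasmussen's lemma (used already in the proof of Proposition~\ref{isom 3} for the identification $\overline{u}_1(x_1,h-x_2,h-x_4,x_4)=\overline{\pi}_{21}$ type computation), together with the identity $\overline{u}_1+x\overline{u}_2=\overline{\pi}_{\ast\ast}$ which was the key polynomial check in Proposition~\ref{isom 1}. The main obstacle is purely combinatorial bookkeeping: with several singular vertices and internal arcs, the starting matrices have many rows, and one must pick the correct order in which to eliminate internal variables so that the surviving rows simplify via known identities rather than producing an unwieldy polynomial. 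I expect the two isomorphisms to be symmetric under the orientation reversal of the singular arcs, so that once the first is established the second follows by the same calculation after swapping the roles of the ``in'' and ``out'' singular vertices.
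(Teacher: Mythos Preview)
Your overall strategy---write both sides as Koszul factorizations, exclude the internal variables using the linear $\overline{u}_2$-rows (and linear $b$-entries), then match the resulting matrices via row operations and Rasmussen twists---is exactly what the paper does. However, two of your concrete expectations are off and would have to be corrected in execution.

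First, the reduction does not land in a $2\times 2$ Koszul matrix. The web $\Gamma_1$ on the simpler side consists of one oriented arc together with one singular resolution, so its factorization already has \emph{three} Koszul rows; correspondingly, on the $\Gamma_2$ side the three singular vertices contribute six rows with three internal variables $x_7,x_8,x_9$, and after excluding those you are left with a $3$-row matrix, not a $2$-row one. The final matching therefore requires a longer chain of operations than you anticipate: the paper uses two elementary row operations, three separate twists (with carefully chosen constants, one of them $k'=-\tfrac{1}{3}$), and another row operation before the entries can be recognized as $\overline{\pi}_{63}$, $\overline{u}_1(x_1,x_2,x_4,x_5)$, and $\overline{u}_2(x_1,x_2,x_4,x_5)$.

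Second, the flip maps $\psi_{x_m-x_n}$ play no role here. Both isomorphisms in the statement are proved by the identical computation (the second is literally the same argument with relabelled boundary marks, as you correctly guess from the symmetry), and neither involves a factor of $x_m-x_n$ appearing in a $b$-entry. So your plan is sound, but the bookkeeping is heavier than Propositions~\ref{isom 1}--\ref{isom 3} and the endgame is a three-row identification rather than a two-row one or a flip.
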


\begin{proof}
Consider the webs $\Gamma_1$ and $\Gamma_2$ depicted in Figure~\ref{fig:direct sum decomposition 4-webs}. This time, the potential has the form $\omega = \sum_{i \in \{1,2,3\}}p(a, h, x_i) - \sum_{j \in \{4,5,6\}}p(a, h, x_j) \in R',$ where  $R' = \mathbb{Q}[a, h, x_1, x_2, x_3, x_4, x_5, x_6].$
\begin{figure}[h]
\[ \Gamma_1 = \raisebox{-42pt}{\includegraphics[height=1.2in]{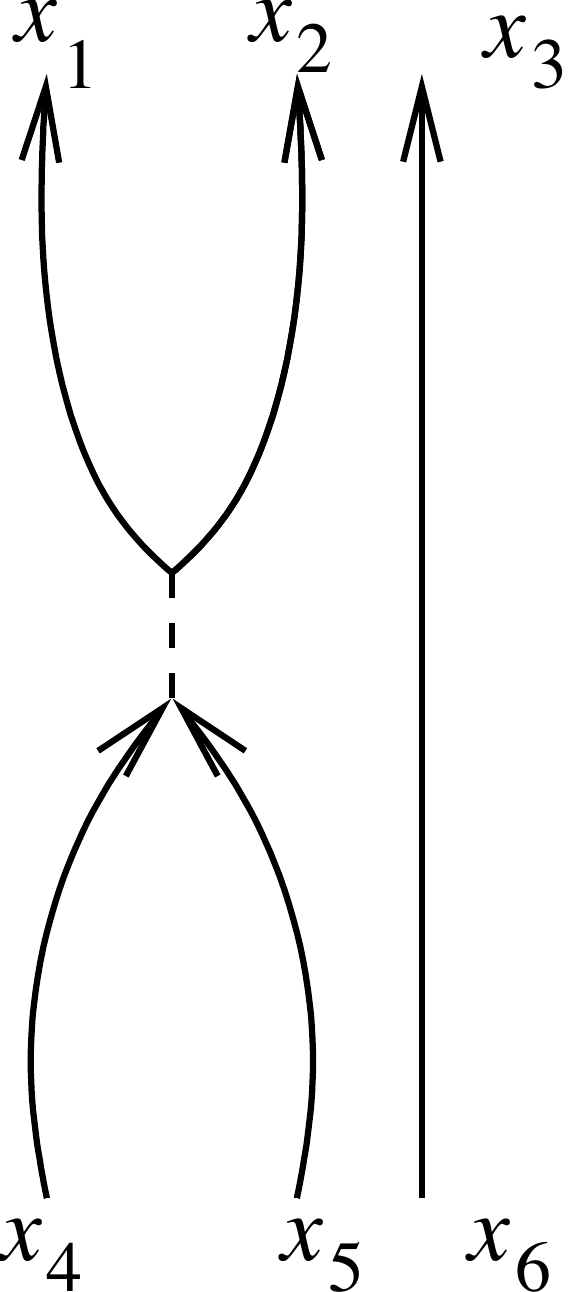}} \hspace{3cm} \Gamma_2 = \raisebox{-42pt}{\includegraphics[height=1.2in]{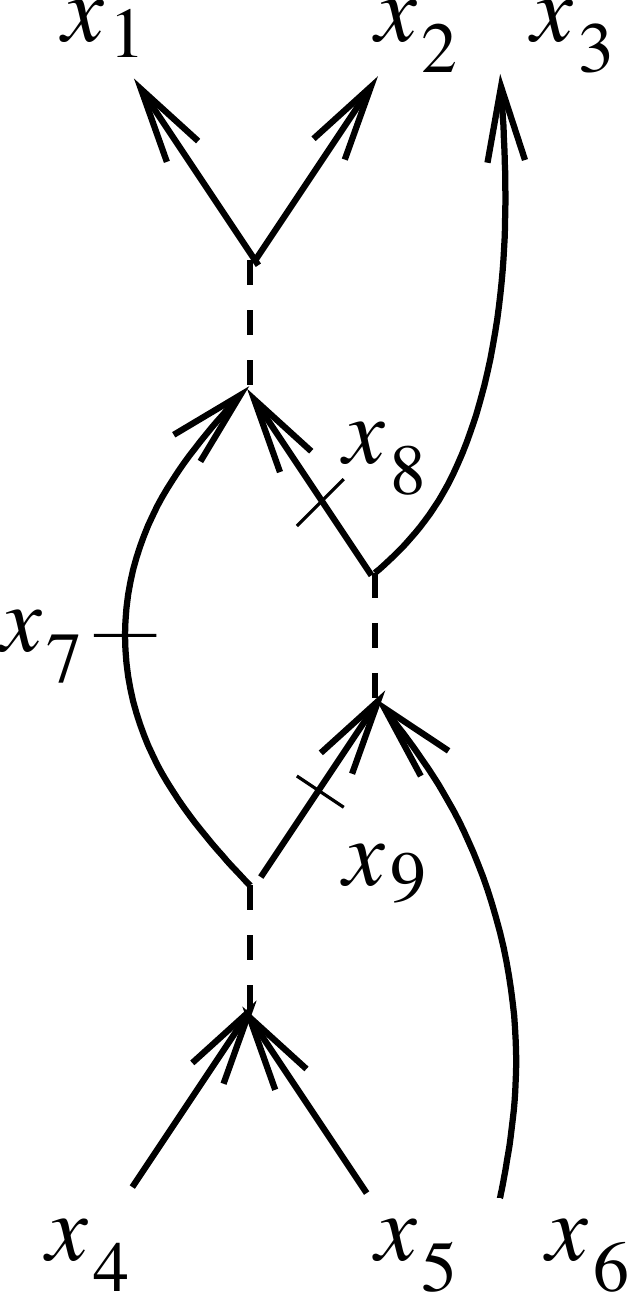}}\]
\caption{Webs $\Gamma_1$ and  $\Gamma_2$}
\label{fig:direct sum decomposition 4-webs}
\end{figure}

In Koszul form, $\overline{C}(\Gamma_2) = \left(\begin{array}{cc}\overline{u}_1'  & x_1 + x_2 -x_7 -x_8 \\ \overline {u}_2' & x_1x_2 -x_7x_8\\ \overline{u}_1'' & x_3 + x_8 -x_6 -x_9 \\  \overline{u}_2'' & x_3x_8 -x_6x_9 \\  \overline{u}_1''' & x_7 + x_9 -x_4 -x_5 \\  \overline{u}_2''' & x_7x_9 -x_4x_5  \end{array} \right )\{-3\},$ 

where $\overline{u}_i' = \overline{u}_i(x_1, x_2, x_8, x_7), \overline{u}_i''= \overline{u}_i(x_8, x_3, x_6, x_9)$ and $\overline{u}_i'''= \overline{u}_i(x_7, x_9, x_5, x_4)$ for $i = 1, 2.$ A shift by $\{-1\}$ was applied to the rows of $\overline{u}_2', \overline{u}_2''$ and $\overline{u}_2'''.$  Variables $x_7, x_8$ and $x_9$ are internal variables and we can use $\overline{u}_2' = -3(x_7 + x_8) + 3h$ and $\overline{u}_2''= -3(x_6 + x_9) + 3h$ to exclude $x_8$ and $x_9,$ by crossing out the second and fourth row and substituting in every other row $x_8 = h-x_7$ and $x_9 = h-x_6.$ To exclude the internal variable $x_7,$ we use the right-hand entry of the third row, which now has the form $-x_7 + x_3.$ After these operations, we obtain
$$\overline{C}(\Gamma_2) \cong \left(\begin{aligned}a_1 &= \overline{u}_1(x_1, x_2 ,h-x_3, x_3) \\  a_2 &= \overline{u}_1(x_3, h-x_6, x_5, x_4) \\ a_3 &= \overline{u}_2(x_3, h-x_6, x_5, x_4) \end{aligned}   \quad \begin{aligned} b_1 &= x_1 + x_2 -h \\ b_2 &= x_3 +h - x_6 -x_4 -x_5\\ b_3 &= x_3(h-x_6) - x_4 x_5 \end{aligned} \right )\{-1\}.$$
 We perform the row operation $[21]_{-1}$ and arrive at 
\[ \overline{C}(\Gamma_2)\{1\} \cong \left(\begin{array}{cc}a_1  & b_1 + b_2 \\ a_2 -a_1 & b_2 \\ a_3 & b_3 \end{array} \right).\] 
Now we apply a twist to the left-hand entries of the first two rows above, for $k= \frac{h}{2} -x_1 -x_2 -x_4 -x_5 + x_3 -x_6,$ to obtain
 \[ \left(\begin{array}{cc}a_1+ kb_2  & b_1 + b_2 \\ a_2 -a_1 -k(b_1 + b_2) & b_2\\ a_3 & b_3 \end{array} \right). \] 
 Performing the row operation $[23]_{x_3 -x_6}$ we get
 \[\overline{C}(\Gamma_2)\{1\} \cong \left(\begin{array}{cc}a_1+ kb_2  & b_1 + b_2 \\ a_2 -a_1 -k(b_1 + b_2)+(x_3-x_6)a_3 & b_2\\ a_3 & b_3 - (x_3 -x_6)b_2 \end{array} \right ) =  \left(\begin{array}{cc}a'_1& b'_1 \\ a'_2& b'_2\\ a'_3 & b'_3 \end{array} \right).\]
 The later Koszul factorization $(\textbf{a}', \textbf{b}')$ is isomorphic in $hmf_{\omega}$ to $\overline{C}(\Gamma_2)\{1\},$ and we apply to it a twist with $k' =-\frac{1}{3}$
  \[\overline{C}(\Gamma_2)\{1\} \cong \left(\begin{array}{cc}a'_1& b'_1 \\ a'_2& b'_2 - \frac{1}{3} a'_3\\ a'_3 & b'_3 + \frac{1}{3} a'_2 \end{array} \right),\]
followed by the row operation $[23]_{-(x_3 - x_6 - x_4 - x_5)}$ 
 \[\overline{C}(\Gamma_2)\{1\} \cong \left(\begin{array}{cc}a'_1& b'_1 \\ a'_2 -(x_3 -x_6 -x_4- x_5)a'_3& b'_2 - \frac{1}{3} a'_3\\ a'_3 & b'_3 + \frac{1}{3} a'_2 + (x_3 -x_6 -x_4- x_5)(b'_2 -\frac{1}{3}a'_3) \end{array}\right).\]
 
 Let's denote the previous Koszul matrix by $(\textbf{a}'', \textbf{b}'').$ Replacing the entries of $\textbf{b}''$ we have
\[\overline{C}(\Gamma_2)\{1\} \cong \left(\begin{array}{cc}a''_1& x_1+ x_2 +x_3 -x_4-x_5-x_6 \\ a''_2 & x_3-x_6\\ a''_3 & x_1x_2-x_4x_5 \end{array} \right).\]
 Finally, we apply the row operation $[21]_1$ followed by a twist with $k'' = -\frac{3}{2}h+x_1 + x_2 + 2x_4 + 2x_5 - x_3 + x_6,$ and we get
 \begin{align*}\overline{C}(\Gamma_2)\{1\} &\cong \left(\begin{array}{cc}a''_1& x_1+ x_2 -x_4-x_5 \\ a''_2 + a''_1 & x_3-x_6\\ a''_3 & x_1x_2-x_4x_5 \end{array} \right)\\
  &\cong  \left(\begin{array}{cc}a''_1 +k''(x_3-x_6) & x_1+ x_2 -x_4-x_5 \\ a''_2 + a''_1 - k''(x_1 + x_2 -x_4 -x_5) & x_3-x_6\\ a''_3 & x_1x_2-x_4x_5\end{array}\right).\end{align*}
 Computing the entries in the left column above, one founds
 \begin{align*}\overline{C}(\Gamma_2)\{1\} &\cong \left(\begin{array}{cc}\overline{u}_1(x_1, x_2, x_4, x_5) & x_1+ x_2 -x_4-x_5 \\ \overline{\pi}_{63} & x_3-x_6 \\ \overline{u}_2(x_1 ,x_2, x_4, x_5) & x_1x_2-x_4x_5 \end{array} \right)\\
 &\cong \left(\begin{array}{cc} \overline{\pi}_{63} & x_3-x_6 \\ \overline{u}_1(x_1, x_2, x_4, x_5) & x_1+ x_2 -x_4-x_5 \\ \overline{u}_2(x_1 ,x_2, x_4, x_5) & x_1x_2-x_4x_5 \end{array} \right) \cong \overline{C}(\Gamma_1)\{1\}.\end{align*}
The later matrix is the Koszul form of the factorization $\overline{C}(\Gamma_1)\{1\},$ thus $\overline{C}(\Gamma_2) \cong \overline{C}(\Gamma_1).$ 
 \begin{figure}[h]
\[ \Gamma_3 = \raisebox{-42pt}{\includegraphics[height=1.2in]{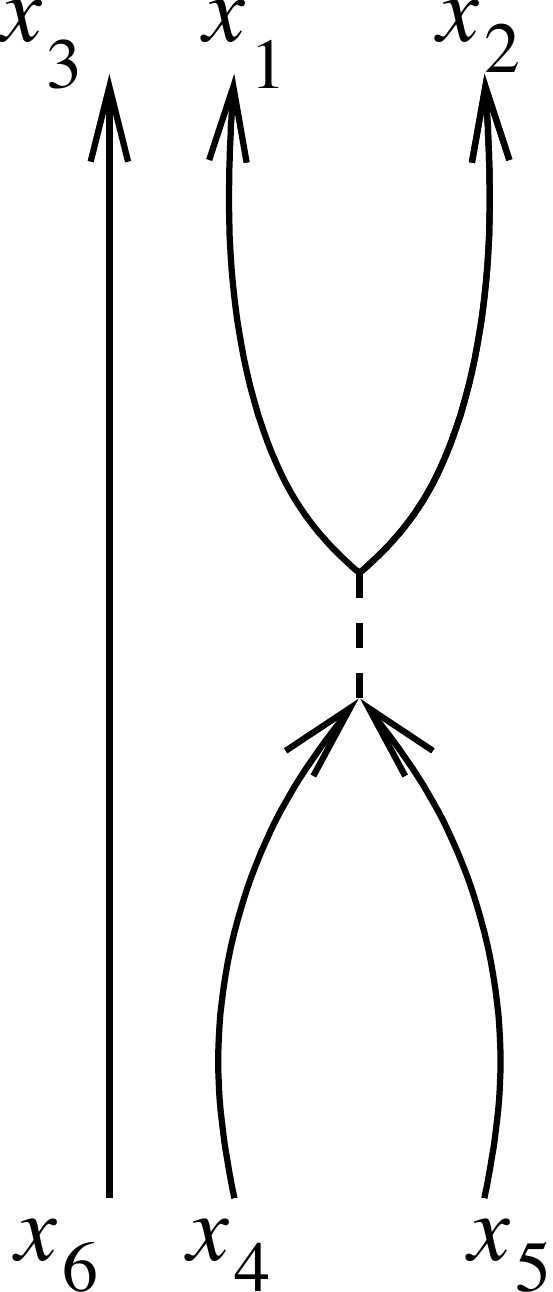}} \hspace{3cm} \Gamma_4 = \raisebox{-42pt}{\includegraphics[height=1.2in]{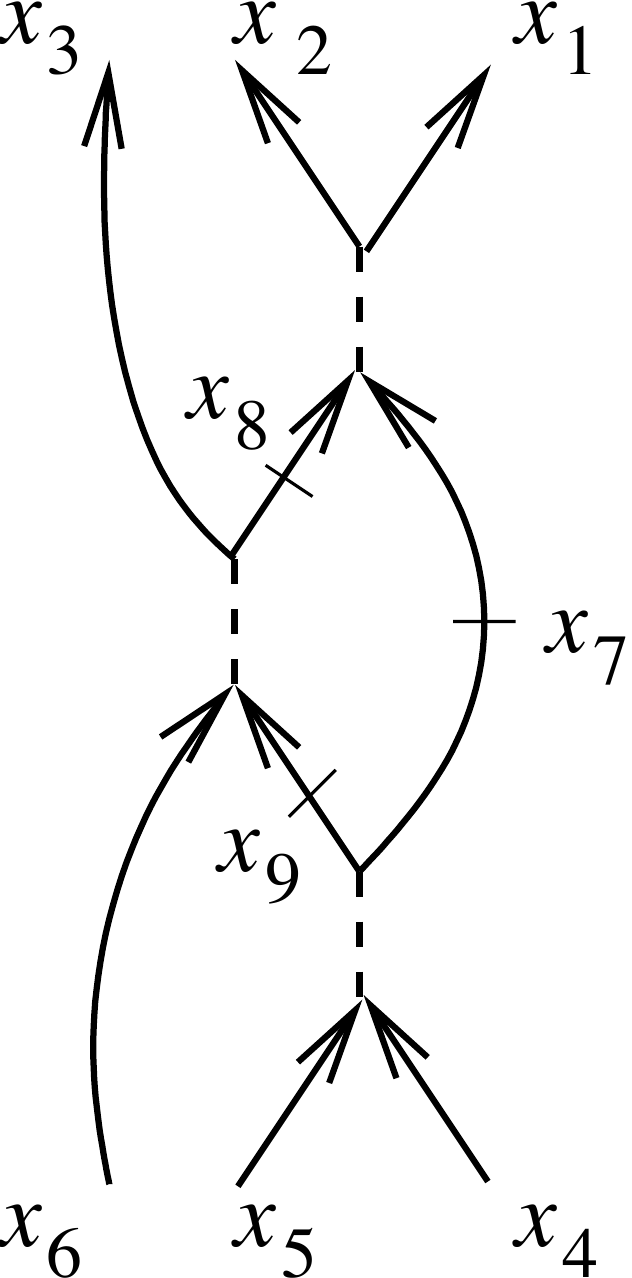}}\]
\caption{Webs $\Gamma_3$ and  $\Gamma_4$}
\label{fig:direct sum decomposition 4bis-webs}
\end{figure}

With the labeling of the diagrams $\Gamma_3, \Gamma_4$ given in Figure~\ref{fig:direct sum decomposition 4bis-webs}, the previous proof also implies that $\overline{C}(\Gamma_4) \cong \overline{C}(\Gamma_3)$ in $hmf_\omega.$
\end{proof}

\begin{definition}
Let $\Gamma$ be a closed web and $p(\Gamma)$ be the mod 2 number of circles in the modification of $\Gamma$ obtained by replacing all singular resolutions with the oriented resolution. Factorization $\overline{C}(\Gamma)$ is a 2-complex and has cohomology only in degree $p(\Gamma).$ Define the cohomology groups of $\overline{C}(\Gamma)$ as
\[ \overline{H}(\Gamma) : = H^{p(\Gamma)}(\overline{C}(\Gamma)). \]
\end{definition}

$\overline{H}(\Gamma)$ is a $\mathbb{Z}$-graded module over $\mathbb{Q}[a, h].$ The isomorphisms obtained in this section together with the fact that the skein relations in Figure~\ref{fig:skein relations} determine the evaluation of $\brak{\Gamma},$ for any web $\Gamma,$ imply the following result.

\begin{proposition}
For any closed web $\Gamma,$ the graded dimension of $\overline{H}(\Gamma)$ is $\brak {\Gamma},$ namely
\[ \sum _{j \in \mathbb{Z}} q^j \rk_{\mathbb{Q}[a,h]} \overline{H}^j(\Gamma) = \brak{\Gamma}.\]
\end{proposition}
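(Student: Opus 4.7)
The plan is to induct on the complexity of $\Gamma$ (say, the number of singular vertices), using as base case a disjoint union of unmarked oriented circles. For a single oriented loop, $\overline{C}(\text{loop}) = \mathcal{A}\brak{1}$, and $\mathcal{A}$ is free over $\mathbb{Q}[a,h]$ of rank $2$ with generators $1$ and $X$ in degrees $-1$ and $+1$, so its graded rank is $q+q^{-1}$, matching the first skein relation of Figure~\ref{fig:skein relations}. By Lemma~2, disjoint union of webs corresponds to tensor product of factorizations over $\mathbb{Q}[a,h]$, so for a disjoint union of $n$ loops the graded rank is $(q+q^{-1})^n$, matching $\brak{\cdot}$.

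The inductive step rests on the observation that the four isomorphisms established in Propositions~\ref{isom 1}, \ref{isom 2}, \ref{isom 3}, \ref{isom 4} lift the remaining skein relations of Figure~\ref{fig:skein relations} to the level of matrix factorizations. Since $\brak{\Gamma}$ is determined by the Figure~\ref{fig:skein relations} relations, it suffices to check that the graded rank of $\overline{H}(\Gamma)$ satisfies the analogous identities. Applying $\overline{H}$ to each of the four isomorphism propositions and taking graded ranks over $\mathbb{Q}[a,h]$, each factorization isomorphism (possibly after a cohomological shift $\brak{1}$ and grading shift $\{r\}$) translates into precisely one of the skein equalities. Combined with the base case, this forces $\sum_j q^j \rk_{\mathbb{Q}[a,h]} \overline{H}^j(\Gamma) = \brak{\Gamma}$ for every closed web reachable from circles by the skein moves, which is every closed web by MOY reduction.

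The main point to check carefully is the parity and grading bookkeeping: one must verify that for every closed $\Gamma$ the factorization $\overline{C}(\Gamma)$ is concentrated in a single cohomological degree, namely $p(\Gamma)$, so that the graded rank of $\overline{H}(\Gamma)$ coincides with the graded Euler characteristic of $\overline{C}(\Gamma)$, and that the cohomological shift $\brak{1}$ in Proposition~\ref{isom 1} alters $p(\Gamma)$ in the expected way. This concentration is already implicit in the earlier computations of $H^{\bullet}(\overline{L}_i^i)$ and $H^{\bullet}(\overline{C}(\Gamma^0)/\sim)$, $H^{\bullet}(\overline{C}(\Gamma^1)/\sim)$, and extends by induction through the isomorphisms. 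Once this parity check is in place, the four skein identities for $\rk$ follow from the four propositions, and uniqueness of the MOY evaluation of $\brak{\Gamma}$ completes the proof.
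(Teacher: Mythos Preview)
Your proposal is correct and follows essentially the same approach as the paper. The paper's own argument is actually just the single sentence preceding the proposition: ``The isomorphisms obtained in this section together with the fact that the skein relations in Figure~\ref{fig:skein relations} determine the evaluation of $\brak{\Gamma}$, for any web $\Gamma$, imply the following result''; your write-up simply makes explicit the base case, the use of Lemma~2 for disjoint unions, and the parity/concentration bookkeeping that the paper leaves implicit.
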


\begin{remark}
Note that any resolution $\Gamma$ of a link diagram consists of a disjoint union of closed webs $\cup_k \Gamma_k,$  and its ``homology'' satisfies $\overline{H}(\Gamma) \cong \mathcal{A}^k.$ In Section~\ref{sec:TQFT} we show that $\overline{H}$ can be regarded as a $(1+1)$--dimensional TQFT functor.
\end{remark}


\section{Invariance under Reidemeister moves}\label{sec:invariance}

\begin{theorem}\label{invariance}
If $D$ and $D'$ are two diagrams representing the same tangle $T,$ then complexes $C(D)$ and $C(D')$ are isomorphic in $K_{\omega}.$
\end{theorem}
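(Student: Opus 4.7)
The plan is to reduce the theorem to a finite list of local verifications and then dispatch each by combining the isomorphisms of Section~\ref{sec:isomorphisms} with Gaussian elimination on complexes of matrix factorizations. Since any two diagrams of the same oriented tangle are related by planar isotopy and a finite sequence of oriented Reidemeister moves R1, R2, R3 performed in a small disk, and since $C(D)$ depends only on the combinatorial structure of $D$ (invariance under the choice of internal marks is provided by Lemma~1), it suffices to exhibit, for each oriented Reidemeister move $D \leftrightarrow D'$, an isomorphism $C(D) \cong C(D')$ in $K_{\omega}$. The fact that $C(D)$ is built by tensoring local complexes $\overline{C}_{p}$ with a fixed ''context'' factorization coming from the rest of $D$ means that once the local isomorphism is produced inside $K_{\omega}$, it extends by $\otimes \id$ to a global one.

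For R1, I would write out the complex attached to a positive kink using Figure~\ref{fig:crossings to chain complexes}, apply Proposition~\ref{isom 2} (Second Isomorphism) to split the bigon $\overline{C}(\Gamma^1)$ appearing there into two shifted copies of the arc factorization $\overline{L}$, and then use Gaussian elimination to cancel the resulting acyclic direct summand against a corresponding term coming from $\overline{C}(\Gamma^0)$. What survives is $\overline{L}$ with a total shift that is trivial after accounting for the $\{1\}$ and $\{2\}$ shifts built into Figure~\ref{fig:crossings to chain complexes}. The negative curl is identical up to reversing cohomological degree and replacing $\Lambda_1$ by $\Lambda_0$. For R2, I would expand both crossings to obtain a length-three complex whose four summands are of the form $\overline{C}(\Gamma^0 \Gamma^0)$, $\overline{C}(\Gamma^0 \Gamma^1)$, $\overline{C}(\Gamma^1 \Gamma^0)$, $\overline{C}(\Gamma^1 \Gamma^1)$; Propositions~\ref{isom 1} and~\ref{isom 3} identify the two ''corners'' whose webs contain a circle or two parallel singular resolutions with shifted copies of the two-arc factorization, after which two applications of Gaussian elimination, using the explicit form of $\Lambda_{0}, \Lambda_{1}$ from Section~\ref{sec:maps}, collapse the complex to the factorization attached to two disjoint arcs.

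For R3, I would reduce to the braid-like variant (the one whose strands are all oriented consistently at the triple point) and use Proposition~\ref{isom 4} (Fourth Isomorphism) to recognize that the complexes on the two sides of the move, after partial resolution of one of the three crossings, tensor with the \emph{same} matrix factorization of the triangle-square web up to isomorphism in $\textit{hmf}_{\omega}$. Combined with the R2 isomorphism already constructed, this implements the standard ''sliding'' of a singular edge past a crossing and yields the desired isomorphism $C(D) \cong C(D')$. The remaining variants of R3 with different orientations follow by symmetry, using the mirrored Fourth Isomorphism and the fact that changing the direction of all arrows corresponds to an involution on $\textit{hmf}_{\omega}$ compatible with $\Lambda_{0}, \Lambda_{1}$.

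The main obstacle I expect is \emph{not} the identification of the terms — Propositions~\ref{isom 1}--\ref{isom 4} already do the hard algebraic work — but rather the bookkeeping needed to verify that each Gaussian elimination is compatible with the full chain differential, i.e.\ that the invertible block being eliminated is genuinely part of a direct summand of chain complexes and not merely of matrix factorizations. Concretely, one has to check that the maps $\Lambda_{0}, \Lambda_{1}$, written in the equivalent Koszul forms provided by the last lemma of Section~\ref{sec:maps}, respect the splittings produced by Propositions~\ref{isom 2} and~\ref{isom 3}; for R3 one must additionally verify that the isomorphism produced by Proposition~\ref{isom 4} intertwines the relevant $\Lambda$-maps up to explicit homotopy. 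These verifications are local in the crossing and, once the Koszul normal forms of Section~\ref{sec:maps} are in hand, reduce to routine (though tedious) matrix calculations, modelled on the corresponding computations in~\cite{KhR1} and~\cite{MV2}.
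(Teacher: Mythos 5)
The overall architecture of your argument — reduce to local Reidemeister moves, identify each web factorization appearing in the cube of resolutions with a simpler one via Propositions~\ref{isom 1}--\ref{isom 4}, and then cancel contractible direct summands — is exactly the strategy the paper uses. However, several of your specific invocations of the isomorphisms are wrong, in a way that would derail the argument if executed literally.

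For R1, you propose applying Proposition~\ref{isom 2} to ``split the bigon $\overline{C}(\Gamma^1)$'' into two shifted copies of the arc. But the singular resolution of a kink is not a digon: it is an arc with a circle attached via two singular vertices (the web in Proposition~\ref{isom 1}), and by the First Isomorphism it is isomorphic to a \emph{single} copy of the arc factorization, not to a direct sum of two. It is the \emph{oriented} resolution of the kink that splits into two pieces, because it has a free oriented circle and $\overline{C}(\Gamma \cup \raisebox{-2pt}{\includegraphics[height=.14in]{orienloop.pdf}}) \cong \overline{C}(\Gamma)\brak{1}\otimes_{\mathbb{Q}[a,h]}\mathcal{A}$; this is the disjoint-union Lemma, not the Second Isomorphism. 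With the roles reversed as you wrote them, the cancellation does not close up: you would be trying to match one copy of the arc against a two-dimensional space on the wrong side of the differential. The paper instead writes $\overline{C}(\Gamma_2)\{-1\}\cong Y_1\oplus Y_2$ for the oriented resolution and uses the First Isomorphism to identify the singular resolution with $\overline{C}(\Gamma)\brak{1}$, after which $Y_1$ cancels.

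For R2, the Second Isomorphism \emph{is} needed, but for the same-orientation variant R2a, where one of the four partial resolutions contains the digon; your sketch instead lists Propositions~\ref{isom 1} and~\ref{isom 3}, which are the tools for the opposite-orientation variant R2b (circle-on-a-strand and the two-parallel-singular-edges web, respectively). You should keep R2a and R2b separate, since the relevant local webs and isomorphisms genuinely differ.

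Finally, you flag the right technical concern — that the invertible blocks you plan to Gauss-eliminate must be shown to be isomorphisms of chain complexes, not just of factorizations — but you propose resolving it by ``routine matrix calculations.'' The paper has a cleaner argument that you should use instead: the only degree-$0$ endomorphisms of $\overline{C}(\Gamma^1)$ in $\textit{hmf}_\omega$ are rational multiples of the identity, so the component maps $f_{03}$, $f_{14}$ landing in those summands are automatically isomorphisms once nonzero, and the contractibility of the offending subcomplexes follows without explicit matrix work.
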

\begin{proof}
\textbf{Reidemeister I.}
Consider diagrams $D$ and $\Gamma$ that differ only in a circular region as in the figure below:
$$D=\raisebox{-11pt}{\includegraphics[height=0.45in]{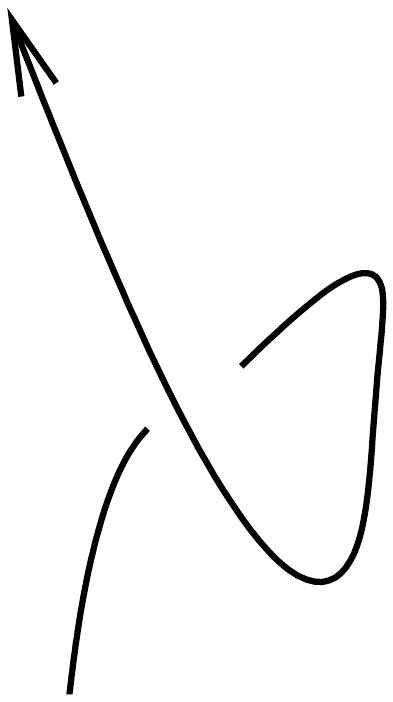}}\qquad
\Gamma=\raisebox{-13pt}{\includegraphics[height=0.45in]{arc.pdf}}$$
The complex $C(D)$ has the form
$$0\longrightarrow \underline{\overline{C}(\Gamma_2) \{-1\}} \stackrel{\Lambda_0}{\longrightarrow} \overline{C}(\Gamma_1) \{-2\} \longrightarrow 0$$ 
where
$$\Gamma_2 = \raisebox{-20pt}{\includegraphics[height=.6in]{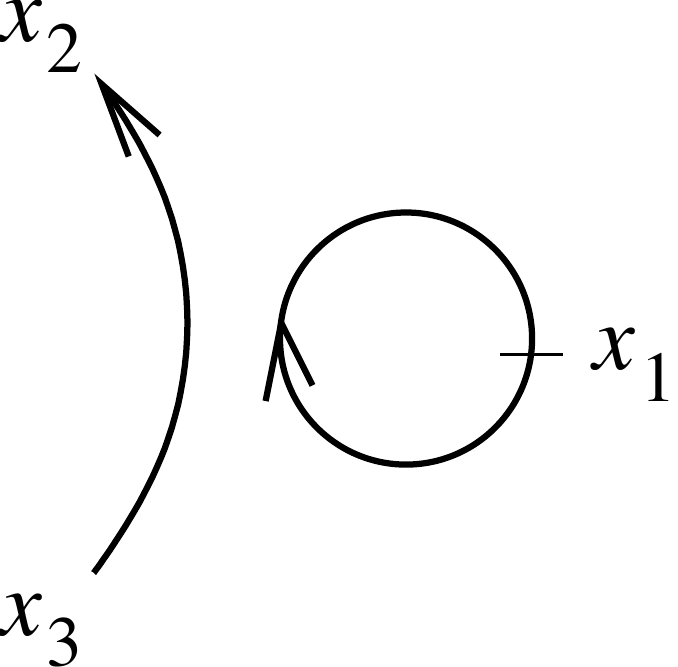}}\quad \mbox{and} \quad\raisebox{-22pt}{\includegraphics[height=0.65in]{lsingresclosed.pdf}} = \Gamma_1$$
Let $Y_1 \subset \overline{C}(\Gamma_2)\{-1\}$ be the inclusion of $\overline{C}(\Gamma)\brak{1}\{-2\}$ onto the first summand of $\overline{C}(\Gamma_2)\{-1\},$ and $Y_2 = f(\overline {C}(\Gamma)\brak{1}\{-2\}) \subset \overline{C}(\Gamma_2)\{-1\},$ where $f \co \overline {C}(\Gamma)\brak{1}\{-2\} \to \overline{C}(\Gamma_2)\{-1\}, f = \id \otimes[ m(x_1) \iota]$. Here we used that $\overline{C}(\Gamma_2) \cong \overline{C}(\Gamma)\brak{1} \otimes_{\mathbb{Q}[a,h]} \mathcal{A}.$

Thus $\overline{C}(\Gamma_2)\{-1\} \cong Y_1 \oplus Y_2$ in $hmf_\omega.$ On the other hand, from the First Isomorphism we know that $\overline{C}(\Gamma)\{-2\} \cong \overline{C}(\Gamma) \brak{1}\{-2\},$ therefore
 the complex $\overline{C}(D)$ is isomorphic in $K_{\omega}$ to the direct sum
\begin{align*} 
0 \longrightarrow &\underline{Y_2} \longrightarrow 0 \\ 
0 \longrightarrow &\underline{Y_1} \stackrel{\cong}{\longrightarrow} \overline{C}(\Gamma)\brak{1}\{-2\}. \end{align*} 
Since $Y_2 \cong \overline{C}(\Gamma)$ and the second summand is contractible, it implies that $C(D) \cong (\Gamma),$ in the category $K_{\omega}.$

A similar approach is used to prove the invariance under Reidemeister I involving a positive kink.

\textbf{Reidemeister IIa.} Consider diagrams $D_1$ and $\Gamma$ that differ only in a circular region as in the figure below:
$$D_1=\raisebox{-11pt}{\includegraphics[height=0.45in]{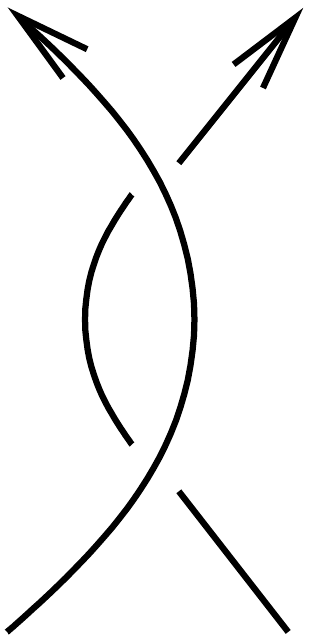}}\qquad
\Gamma=\raisebox{-13pt}{\includegraphics[height=0.45in]{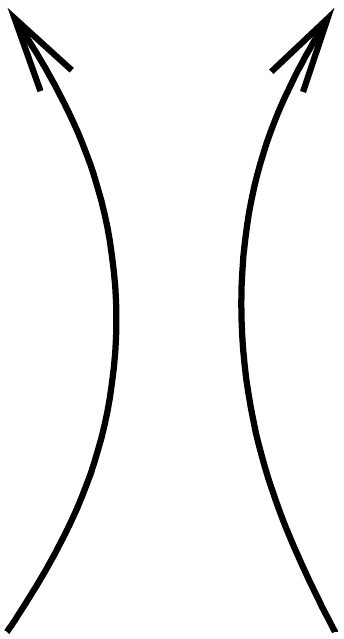}}$$

The complex $C(D_1)$ has the form
$$ 0\longrightarrow \overline{C}(\Gamma_{00})\{1\} \stackrel{(f_1, f_3)^t}{\longrightarrow} \underline{\overline{C}(\Gamma_{01}) \oplus \overline{C}(\Gamma_{10})} \stackrel{(f_2, - f_4)}{\longrightarrow} \overline{C}(\Gamma_{11})\{-1\} \longrightarrow 0,$$
whose objects are the matrix factorizations corresponding to the four resolutions of $D$ given in Figure~\ref{fig:reidIIA}, with potential $\omega = p(a, h, x_1) + p(a, h, x_2) - p(a, h, x_3)  - p(a, h, x_4).$

\begin{figure}[ht]
\raisebox{-11pt}{\includegraphics[height=2.2in]{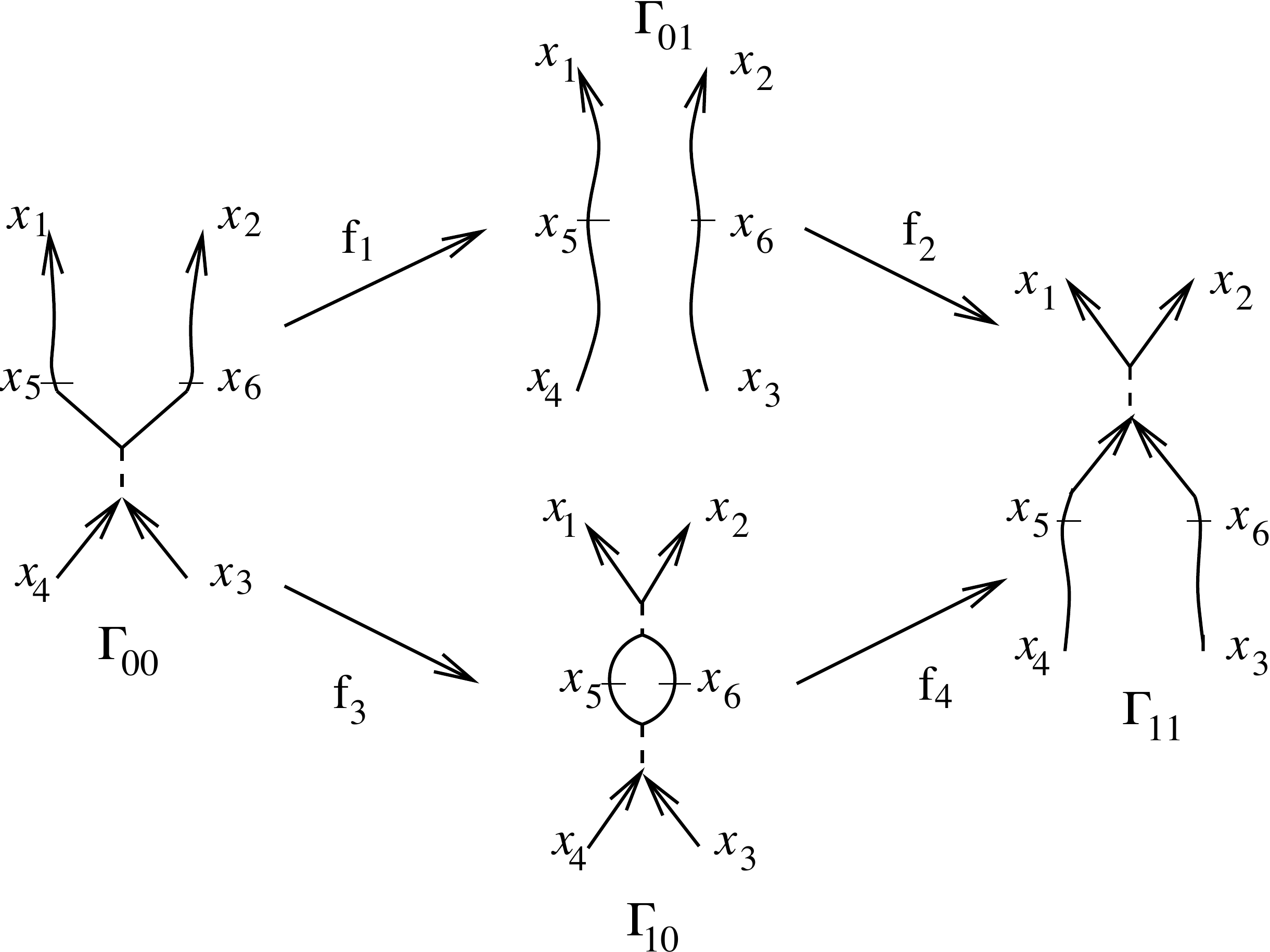}}
\caption{}\label{fig:reidIIA}
\end{figure}
Using the Second Isomorphism and that the marking doesn't matter, we have  
$$\overline{C}(\Gamma_{01}) \cong \overline{C}(\Gamma^0)$$ $$ \overline{C}(\Gamma_{00})\cong \overline{C}(\Gamma^1) \cong \overline{C}(\Gamma_{11})$$ $$ \overline{C}(\Gamma_{10})\cong \overline{C}(\Gamma^1)\{1\} \oplus \overline{C}(\Gamma^1)\{-1\},$$

where $\Gamma^0$ and $\Gamma^1$ are the diagrams from figure~\ref{maps}. Therefore the complex $C(D_1)$ is isomorphic (in $K_{\omega}$) to the complex
$$ 0 \longrightarrow \overline{C}(\Gamma^1)\{1\} \stackrel{(f_1, f_{03}, f_{13})^t}{\longrightarrow} \underline{\overline{C}(\Gamma^0) \oplus \overline{C}(\Gamma^1)\{1\} \oplus \overline{C}(\Gamma^1) \{-1\}} \stackrel{(f_2, -f_{04}, -f_{14})}{\longrightarrow} \overline{C}(\Gamma^1)\{-1\} \longrightarrow 0,$$

(where $f_{03}, f_{13}$ and $ f_{04}, f_{14}$ are the components of $f_3$ and $f_4$, respectively, under the Second Isomorphism). The later complex decomposes into the direct sum of complexes
\begin{align*}
0 \longrightarrow & \underline{\overline{C}(\Gamma^0)} \longrightarrow 0 \\
0\longrightarrow \overline{C}(\Gamma^1)\{1\} \stackrel{f_{03}}{\longrightarrow} & \underline{\overline{C}(\Gamma^1) \{1\}} \longrightarrow 0 \\
0 \longrightarrow &\underline{\overline{C}(\Gamma^1)\{-1\}} \stackrel{f_{14}}{\longrightarrow}  \overline{C}(\Gamma^1)\{-1\} \longrightarrow 0.\end{align*}

The last two complexes are contractible (this is because the only degree $0$ endomorphisms of $\overline{C}(\Gamma^1)$ are rational multiples of the identity endomorphism, thus $f_{03}$ and $f_{14}$ are isomorphisms). Moreover, $\overline{C}(\Gamma^0) \cong \overline{C}(\Gamma)$ in $hmf_\omega^{fd},$ and we conclude that  $C(D_1)$ and $C(\Gamma)$ are isomorphic in $K_\omega.$

\textbf{Reidemeister IIb.}
$$D_2=\raisebox{-7pt}{\includegraphics[height=0.25in]{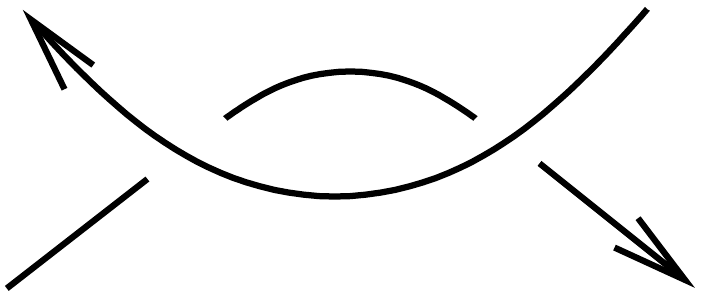}}\qquad
\Gamma'=\raisebox{-7pt}{\includegraphics[height=0.25in]{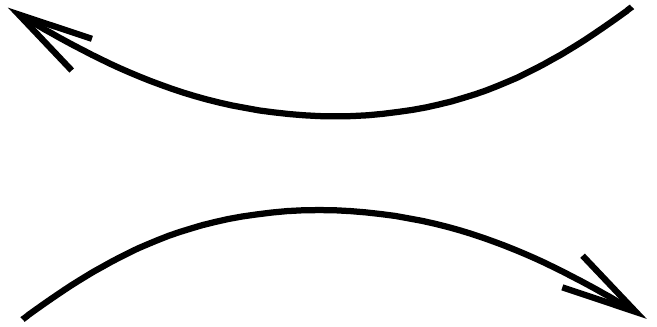}}$$

The complex of matrix factorizations $C(D_2)$ is an element of the category $K_{\omega},$ where $\omega = p(a, h, x_1) - p(a, h, x_2) + p(a, h, x_3) - p(a, h, x_4),$ and has the form
$$ 0\longrightarrow \overline{C}(\Gamma_{00})\{1\} \stackrel{(f_1, f_3)^t}{\longrightarrow} \underline{\overline{C}(\Gamma_{01}) \oplus \overline{C}(\Gamma_{10})} \stackrel{(f_2, - f_4)}{\longrightarrow} \overline{C}(\Gamma_{11})\{-1\} \longrightarrow 0.$$

The resolutions of $D_2$ are given in Figure~\ref{fig:reidIIB}. 
\begin{figure}[ht]
\raisebox{-11pt}{\includegraphics[height=2in]{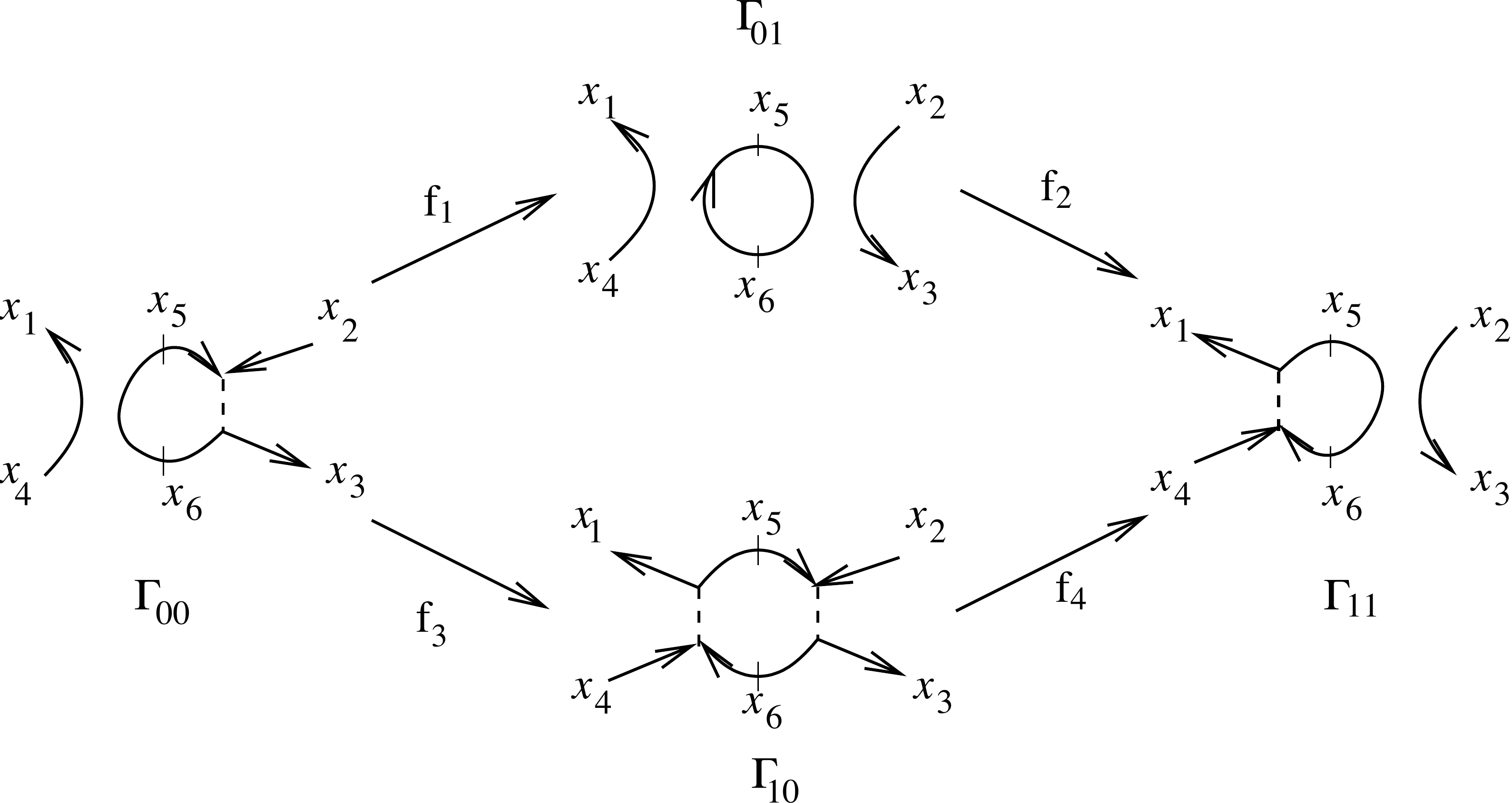}}
\caption{}\label{fig:reidIIB}
\end{figure}

We know that $ \overline{C}(\Gamma_{01}) \cong \overline{C}(\Gamma'') \brak{1} \otimes _{\mathbb{Q}[a,h]} \mathcal{A} \cong \overline{C}(\Gamma'') \brak{1} \{1\} \oplus \overline{C}(\Gamma'')\brak{1} \{-1\},$
$$\overline{C}(\Gamma_{00}) \cong \overline{C}(\Gamma'') \brak{1} \cong \overline{C}(\Gamma_{11})\quad \mbox{and} \quad \overline{C}(\Gamma_{10}) \cong \overline{C}(\Gamma'),$$
where $\Gamma''$ is the diagram in Figure~\ref{fig:Gamma-dprime}. Here we used the First Isomorphism and the Third Isomorphism, and that marking doesn't matter.
\begin{figure}[ht]
\raisebox{-11pt}{\includegraphics[height=.9in]{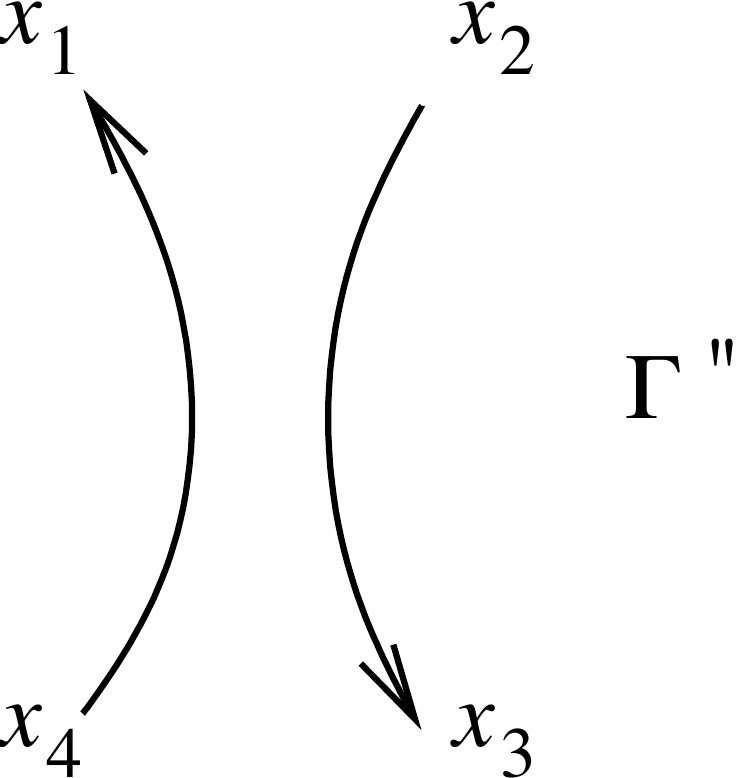}}
\caption{}\label{fig:Gamma-dprime}
\end{figure}

Consequently, $C(D_2)$ is isomorphic to the following complex
$$ 0 \longrightarrow  \overline{C}(\Gamma'') \brak{1}\{1\} \longrightarrow \underline{\overline{C}(\Gamma'') \brak{1} \{1\} \oplus \overline{C}(\Gamma'') \brak{1} \{-1\} \oplus \overline{C}(\Gamma')} \longrightarrow \overline{C}(\Gamma'') \brak{1} \{-1\} \longrightarrow 0,$$
which decomposes into the direct sum of complexes
\begin{align*}
0 \longrightarrow &\underline{\overline{C}(\Gamma')} \longrightarrow 0 \\
0 \longrightarrow \overline{C}(\Gamma'')\brak{1}\{1\} \stackrel{\cong}{\longrightarrow} &\underline{\overline{C}(\Gamma'')\brak{1}\{1\}} \longrightarrow 0 \\
0 \longrightarrow & \underline{\overline{C}(\Gamma'')\brak{1}\{-1\}} \stackrel{\cong}{\longrightarrow} \overline{C}(\Gamma'')\brak{1}\{-1\} \longrightarrow 0.\end{align*}
The last two are contractible, therefore $C(D_2) \cong C(\Gamma')$ in the category $K_{\omega}.$

\textbf{Reidemeister III.} Given diagrams $D$ and $D'$ below, we show that complexes $C(D)$ and $C(D')$ are isomorphic by showing they are both isomorphic to the same third complex.

$$D \quad \raisebox{-13pt}{\includegraphics[height=.5in]{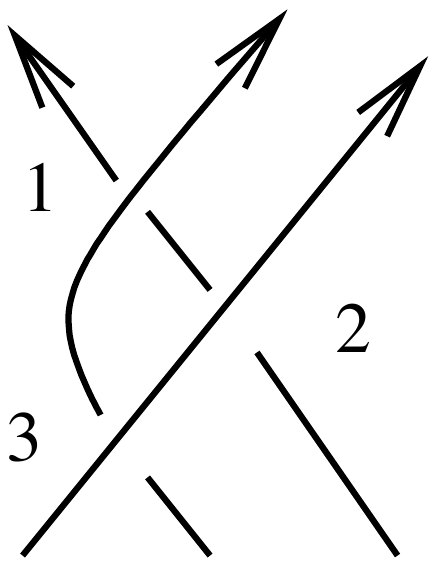}} \hspace{2cm} \raisebox{-13pt}{\includegraphics[height=.5in]{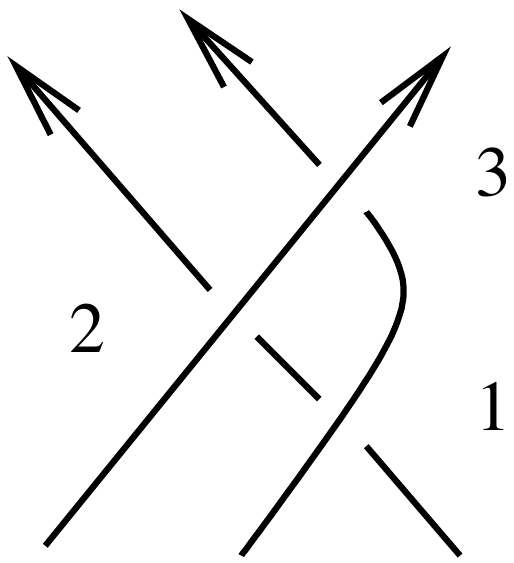}} \quad D'$$

The cube of resolutions corresponding to the diagram $D$ is given in Figure~\ref{fig:reidIII-left}, and that of $D'$ in Figure~\ref{fig:reidIII-right}.
\begin{figure}[ht]
\raisebox{-8pt}{\includegraphics[height=3.5in]{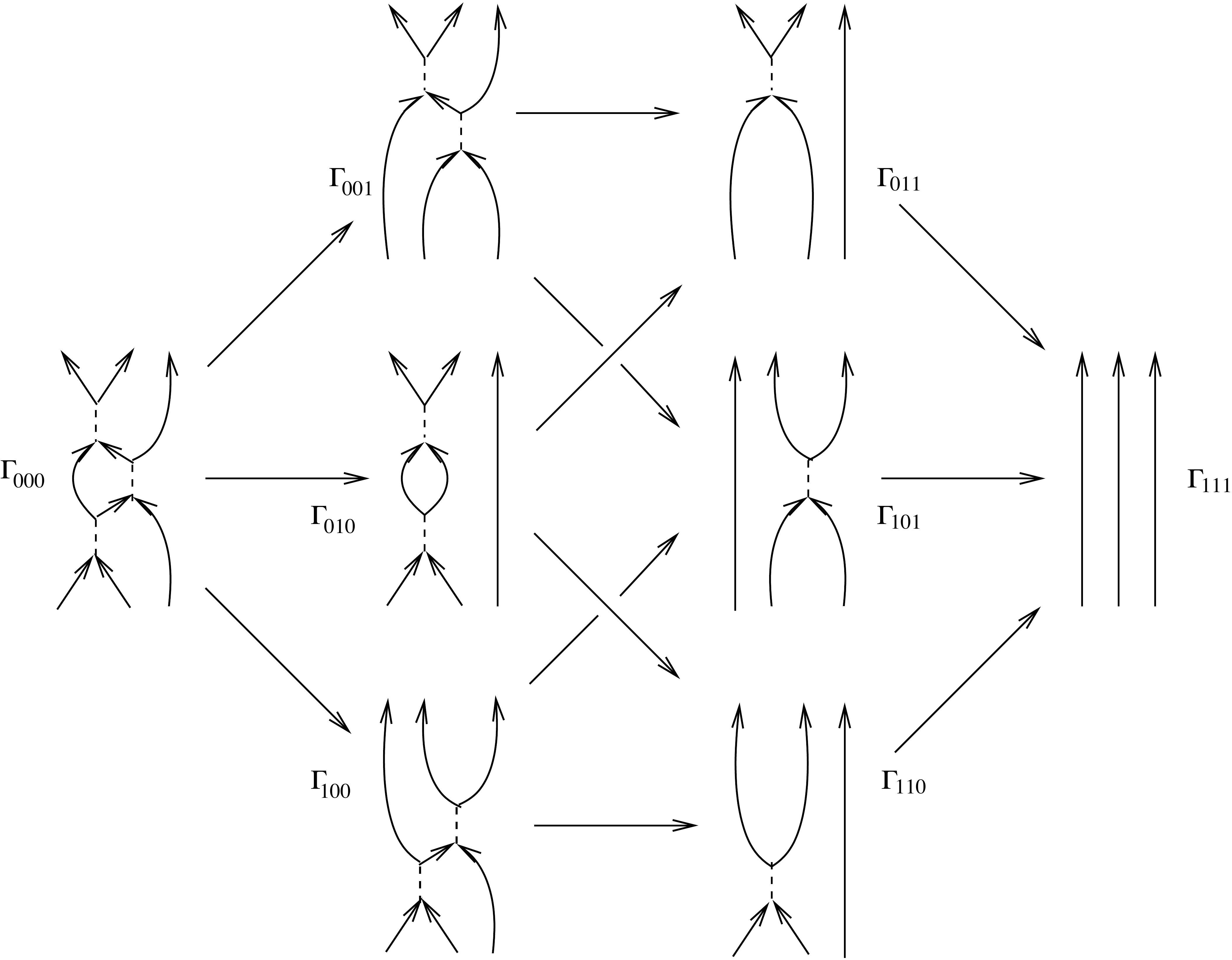}}
\caption{The cube of resolutions of $D$}\label{fig:reidIII-left}
\end{figure}

\begin{figure}[ht]
\raisebox{-8pt}{\includegraphics[height=3.5in]{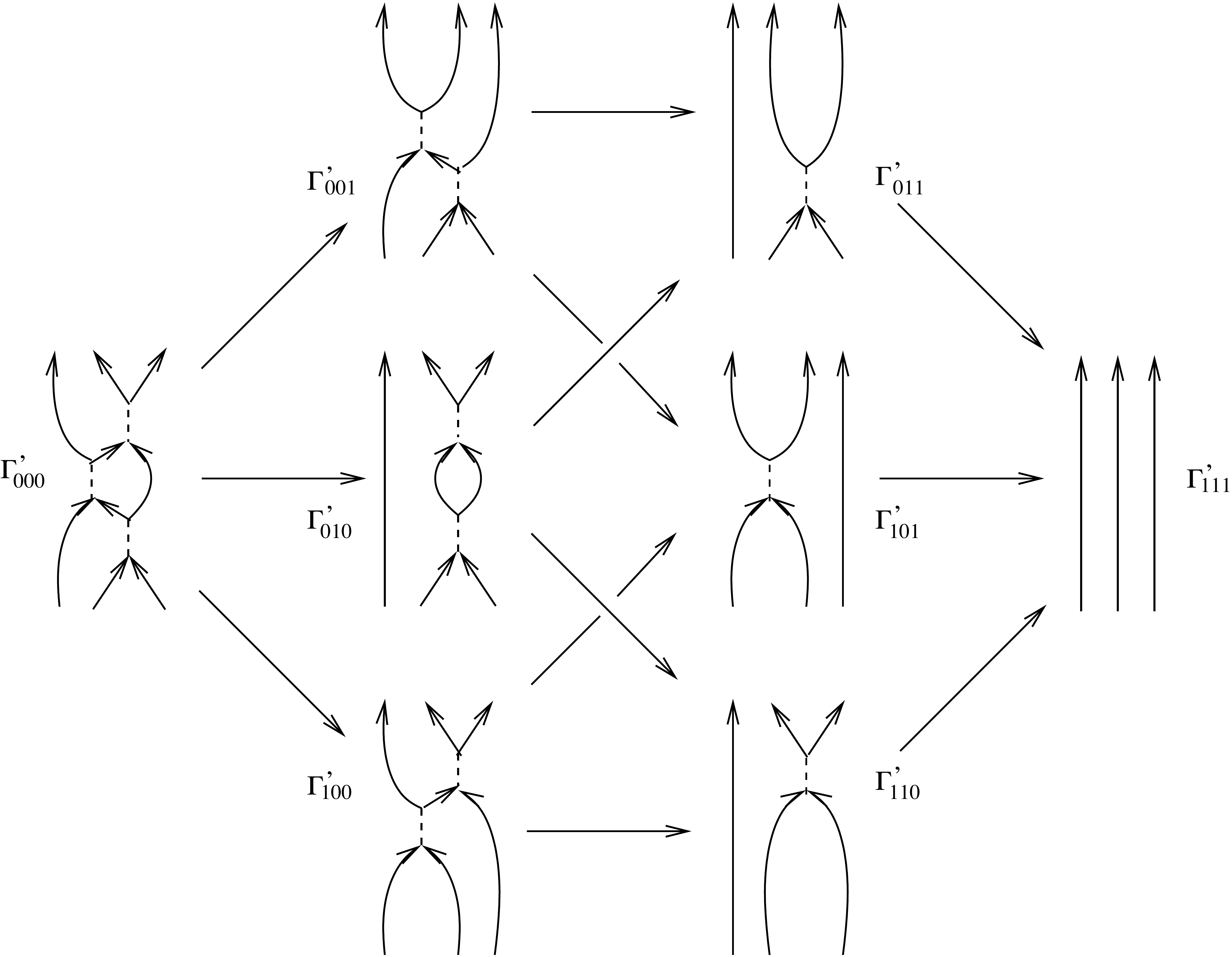}}
\caption{The cube of resolutions of $D'$}\label{fig:reidIII-right}
\end{figure}

The complex $C(D)$ has the form
\[0 \longrightarrow \overline{C}(\Gamma_{000}) \{6\} \stackrel{d^{-3}}{\longrightarrow} \left ( \begin{array}{c} \overline{C}(\Gamma_{001})\{5\} \\ \overline{C}(\Gamma_{010})\{5\} \\ \overline{C}(\Gamma_{100})\{5\} \end{array}  \right ) \stackrel{d^{-2}}{\longrightarrow} \left ( \begin{array}{c} \overline{C}(\Gamma_{011})\{4\} \\ \overline{C}(\Gamma_{101})\{4\} \\ \overline{C}(\Gamma_{110})\{4\} \end{array}  \right) \stackrel{d^{-1}}{\longrightarrow} \underline{\overline{C}(\Gamma_{111})\{3\}} \longrightarrow 0.\]

We know that
\begin{align*}
\overline{C}(\Gamma_{110})  & \cong \overline{C}(\Gamma_{011}) \quad (\text{the two webs are isotopic})\\
 \overline{C}(\Gamma_{010}) &\cong \overline{C}(\Gamma_{011}) \{1\} \oplus \overline{C}(\Gamma_{011}) \{-1\} \quad (\text{by the Second Isomorphism})\\
 \overline{C}(\Gamma_{000}) &\cong \overline{C}(\Gamma_{011}) \quad (\text{by the Fourth Isomorphism}).\end{align*}
 
The complex $C(D')$ has the same form as $C(D).$ Moreover, similar isomorphisms of matrix factorizations as those above hold for resolutions of $D',$ with the remark that $\Gamma_{ijk}$ in $C(D)$ should be replaced by $\Gamma'_{ijk}$ in $C(D').$
 
 $C(D)$ is isomorphic to the following complex
 \[0 \longrightarrow \overline{C}(\Gamma_{011}) \{6\} \stackrel{d^{-3}}{\longrightarrow} \left ( \begin{array}{c} \overline{C}(\Gamma_{001})\{5\} \\ \overline{C}(\Gamma_{011})\{6\} \\ \overline{C}(\Gamma_{011})\{4\}\\ \overline{C}(\Gamma_{100})\{5\} \end{array}  \right ) \stackrel{d^{-2}}{\longrightarrow} \left ( \begin{array}{c} \overline{C}(\Gamma_{011})\{4\} \\ \overline{C}(\Gamma_{101})\{4\} \\ \overline{C}(\Gamma_{011})\{4\} \end{array}  \right) \stackrel{d^{-1}}{\longrightarrow} \underline{\overline{C}(\Gamma_{111})\{3\}} \longrightarrow 0,\]
 but the later decomposes into contractible complexes of the form 
\[0 \longrightarrow \overline{C}(\Gamma_{011}) \{6\} \stackrel{\cong}{\longrightarrow} \overline{C}(\Gamma_{011}) \{6\} \longrightarrow 0 \]
\[0 \longrightarrow \overline{C}(\Gamma_{011}) \{4\} \stackrel{\cong}{\longrightarrow} \overline{C}(\Gamma_{011}) \{4\} \longrightarrow 0 \]
and the complex $\mathcal{C}$
\[\mathcal{C}: \quad 0 \longrightarrow \left ( \begin{array}{c} \overline{C}(\Gamma_{001})\{5\} \\ \overline{C}(\Gamma_{100})\{5\} \end{array}\right )  \longrightarrow \left ( \begin{array}{c} \overline{C}(\Gamma_{011})\{4\} \\ \overline{C}(\Gamma_{101})\{4\} \end{array}\right) \longrightarrow \underline{\overline{C}(\Gamma_{111})\{3\}} \longrightarrow 0.  \]
In other words, complexes $C(D)$ and $\mathcal{C}$ are isomorphic in $K_\omega.$
 
We apply the same argument as in the case of $C(D)$ to conclude that $C(D')$ is isomorphic in $K_\omega$ to the complex $\mathcal{C'}$
\[\mathcal{C'}: \quad 0 \longrightarrow \left ( \begin{array}{c} \overline{C}(\Gamma'_{001})\{5\} \\ \overline{C}(\Gamma'_{100})\{5\} \end{array}\right )  \longrightarrow \left ( \begin{array}{c} \overline{C}(\Gamma'_{101})\{4\} \\ \overline{C}(\Gamma'_{011})\{4\} \end{array}\right) \longrightarrow \underline{\overline{C}(\Gamma'_{111})\{3\}} \longrightarrow 0. \]
Since the web diagram $\Gamma_{ijk}$ from $\mathcal{C}$ and $\Gamma'_{ijk}$ from $\mathcal{C'}$ respectively are isotopic, their matrix factorizations are isomorphic in $hmf_{\omega}.$ In particular $\mathcal{C} \cong \mathcal {C'}$ in $K_\omega,$ and the invariance under the third type of Reidemeister move follows. 
\end{proof}

\begin{corollary}The isomorphism class of the object $C(D)$ in the category $K_\omega$ is an invariant of the tangle $T.$
\end{corollary}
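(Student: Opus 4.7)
The plan is to derive this corollary essentially by combining Theorem~\ref{invariance} with the classical theorem of Reidemeister for tangles: two generic planar diagrams $D$ and $D'$ of the same tangle $T$ are related by a finite sequence of planar isotopies fixing the boundary, together with Reidemeister moves I, IIa, IIb, and III performed inside small disks away from $\partial T$.

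First I would verify that planar isotopies relative to $B$ do not change the isomorphism class of $C(D)$ in $K_\omega$. The construction of $C(D)$ depends only on the combinatorial data of the diagram (the set of crossings, the set of marked arcs, and the incidence relations among them), so any planar isotopy that does not create or destroy crossings induces an obvious identification of the underlying cubes of resolutions and their associated tensor products of factorizations, hence an isomorphism in $K_\omega$. At this stage I would also invoke the earlier lemma showing that a change in the internal marking yields a canonical isomorphism in $\textit{hmf}_\omega$, so the isomorphism class of $C(D)$ is independent of the auxiliary choice of marks on segments.

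Next, I would apply Theorem~\ref{invariance} directly: for each Reidemeister move $D \rightsquigarrow D'$ performed inside a small disk, the complexes $C(D)$ and $C(D')$ are isomorphic in $K_\omega$. This is exactly what was established case by case (RI, RIIa, RIIb, RIII). Composing the isomorphisms along a finite sequence of moves yields an isomorphism $C(D) \cong C(D')$ in $K_\omega$ for any two diagrams of $T$.

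The only subtlety worth mentioning, and what I would treat as the main point to be careful about, is locality: Theorem~\ref{invariance} is stated for diagrams that differ in a circular region where the tangle outside that region is identical. One must check that the isomorphism obtained there extends to a global isomorphism of $C(D)$ and $C(D')$ when the Reidemeister move takes place inside a larger tangle diagram. This follows because $C(D)$ is defined as a tensor product (over the appropriate polynomial rings) of the local factorizations attached to the crossings, arcs, and loops of $D$; the move only alters the tensor factors corresponding to the disk, while the remaining factors and the gluing rings are identical for $D$ and $D'$. Thus the local isomorphism from Theorem~\ref{invariance} tensors with the identity on the exterior to give the desired global isomorphism in $K_\omega$, completing the proof.
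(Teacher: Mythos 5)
Your argument is correct and is exactly the intended one: the paper states this as an immediate consequence of Theorem~\ref{invariance} with no separate proof, implicitly relying on the classical Reidemeister theorem for tangles, the marking-independence lemma, and the locality of the construction just as you spell out. Your explicit treatment of the locality point (tensoring the local isomorphism with the identity on the exterior factors) is a useful clarification but does not depart from the paper's route.
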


\textbf{The case of links.} When $T$ is a link $L,$ for any resolution $\Gamma$ of $D,$ the corresponding factorization $\overline{C}(\Gamma)$ is a 2-periodic complex of graded $R' = \mathbb{Q}[a,h]$-modules.  The category $K_\omega$ is isomorphic to the category of finite-rank $\mathbb{Z} \oplus \mathbb{Z} \oplus \mathbb{Z}_2$-graded $\mathbb{Q}[a,h]$-modules. For each $\Gamma,$ the homology groups of $\overline{C}(\Gamma)$ are nontrivial only in one degree, thus the cohomology of $C(D)$ is $\mathbb{Z} \oplus \mathbb{Z}$-graded.

We denote by $KR_{a,h}(D)$ the universal Khovanov-Rozansky complex for $n=2.$ It is obtained from $C(D)$ by replacing each matrix factorization $\overline{C}(\Gamma)$ with its homology group $\overline{H}(\Gamma).$ Furthermore, we denote by $HKR_{a,h}$ the cohomology of $KR_{a,h}.$ Note that  $HKR_{a,h}$ is exactly the $\mathbb{Z} \oplus \mathbb{Z}$-graded cohomology of $C(D).$ We have
$$ HKR_{a,h}(D) : = \bigoplus_{i, j \in \mathbb{Z}} H^{i,j}(C(D)). $$ 
It follows from construction that the graded Euler characteristic of $HKR_{a,h}(D)$ is the polynomial $\brak{L}.$ Specifically, the following holds
$$ \brak{L} = \sum_{i,j \in \mathbb{Z}} (-1)^i q^j \rk_{\mathbb{Q}[a,h]} H^{i,j} (D).$$


\section{Understanding the differentials}\label{sec:TQFT}

The ring $\mathcal{A} = \mathbb{Q}[a,h, X]/(X^2 -hX - a)$ is commutative Frobenius with trace map
 $\epsilon \co \mathcal{A} \rightarrow \mathbb{Q}[a, h], \,\,\epsilon(1) = 0, \,\epsilon(X) = 1$.
 Multiplication $m \co \mathcal{A} \otimes \mathcal{A} \rightarrow \mathcal{A}$ and comultiplication $\Delta \co \mathcal{A} \rightarrow \mathcal{A} \otimes \mathcal{A}$ are defined by
$$ \begin{cases}
 m(1 \otimes X) =X, & m(X \otimes 1) = X\\ 
m(1 \otimes 1) =1, & m(X \otimes X) =hX+ a
 \end{cases},\quad
 \begin{cases}
 \Delta(1) = 1 \otimes X + X \otimes 1-h1\otimes 1\\ 
 \Delta(X) = X \otimes X + a 1 \otimes 1.\end{cases} $$

Recall that $\mathcal{A} = \brak {1, X}_{\mathbb{Q}[a, h]}$ is graded with $\deg(1) = -1$ and $\deg(X) = 1.$ The trace $\epsilon$ and unit $\iota$ are maps of degree $-1,$ while multiplication and comultiplication are maps of degree 1.

Let $\Lambda_0^* \co \overline{H}(\raisebox{-8pt}{\includegraphics[height=0.3in]{orcircles.pdf}}) \to \overline{H}(\raisebox{-8pt}{\includegraphics[height=0.3in]{closed-web.pdf}})$ and $\Lambda_1^*\co \overline{H}(\raisebox{-8pt}{\includegraphics[height=0.3in]{closed-web.pdf}}) \to \overline{H}(\raisebox{-8pt}{\includegraphics[height=0.3in]{orcircles.pdf}})$ be the maps induced by $\Lambda_0$ and $\Lambda_1,$ respectively, at the homology level of the corresponding factorizations.

We know that 
\begin{align*}
\overline{C} (\raisebox{-8pt}{\includegraphics[height=0.3in]{closed-web.pdf}}) &\stackrel {\textit{hmf}_\omega}{\cong} \overline{C}(\raisebox{-8pt}{\includegraphics[height=0.3in]{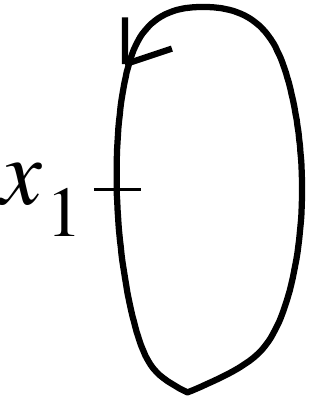}}\,)\brak{1}, \quad \overline{H}(\raisebox{-8pt}{\includegraphics[height=0.3in]{orcircle}}\,) \cong \mathcal{A}, \\
\overline{H}(\raisebox{-8pt}{\includegraphics[height=0.3in]{closed-web.pdf}}) &\cong \mathbb{Q}[a,h,x_1, x_2]/(x_1 + x_2 -h,\, x_1x_2 + a) \{-1 \} \\ & \cong \brak{1, x_1}_{\mathbb{Q}[a,h]}\{-1\}, \\ \overline{H}(\raisebox{-8pt}{\includegraphics[height=0.3in]{orcircles.pdf}}) &\cong \mathbb{Q}[a,h,x_1, x_2]/(x_1^2 -hx_1 - a, \, x_2^2 -hx_2 - a) \{-2\} \\
&\cong \brak{1, x_1, h-x_2, x_1(h-x_2)}_{\mathbb{Q}[a,h]} \{-2\}.\end{align*}
 
There are $\mathbb{Q}[a, h]$-module isomorphisms:
\begin{align} \label{isomorphism f}
 \overline{H}(\raisebox{-8pt}{\includegraphics[height=0.3in]{closed-web.pdf}}) \stackrel{f}{\longrightarrow} \mathcal{A}, \quad & f \co \begin{cases} 1 \to 1\\ x_1 \to X  \end{cases} \\ \label{isomorphism g}
 \overline{H}(\raisebox{-8pt}{\includegraphics[height=0.3in]{orcircles.pdf}}) \stackrel{g}{\longrightarrow} \mathcal{A} \otimes \mathcal{A}, \quad & g\co \begin{cases}  1\to 1\otimes 1,\,\, x_1 \to X \otimes 1 \\ h-x_2 \to 1 \otimes X, \,\, x_1(h-x_2) \to X\otimes X.\end{cases} \end{align}
 
 \begin{lemma}
 $\Lambda_0^* = m$ and $\Lambda_1^* = \Delta.$
 \end{lemma}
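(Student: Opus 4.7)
\emph{Plan.} My strategy is to evaluate $\Lambda_0^{\ast}$ and $\Lambda_1^{\ast}$ directly on cohomology representatives, using the explicit $2\times 2$ matrices $(U_0,U_1)$ and $(V_0,V_1)$ displayed in Section~\ref{sec:maps}, and then to compare the outputs with the Frobenius maps $m$ and $\Delta$ via the isomorphisms $f$ and $g$. (Equivalently one could invoke the preceding lemma that identifies $\Lambda_0,\Lambda_1$ with the flip maps $\psi'_{x_4-x_2},\psi_{x_4-x_2}$; the two routes lead to the same computation.)

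The first step is to close up the webs by specializing $x_4=x_1$ and $x_3=x_2$, which is exactly the specialization already used to describe $\overline H$ of the two relevant closed webs. After this substitution, a direct inspection of $P_0,P_1$ and $Q_0,Q_1$ shows that in each case the first column of the differential $M_0\to M_1$ consists of non-zero-divisors (acting only on the first summand), while the second column vanishes. Consequently every class of $H^0$ is represented by a pair of the form $(0,v)$, and under the isomorphisms of the preceding section $[(0,v)]$ is identified with the residue of $v$ in the corresponding polynomial quotient ring.

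Applying the matrices then becomes a one-line calculation. After setting $x_4=x_1,\ x_3=x_2$, one gets
\[
U_0\,(0,v)^t=(0,v),\qquad V_0\,(0,v)^t=\bigl(0,(x_1-x_2)v\bigr).
\]
Hence $\Lambda_0^{\ast}$ is the natural surjection $\mathbb{Q}[a,h,x_1,x_2]/(x_1^2-hx_1-a,\,x_2^2-hx_2-a)\to\mathbb{Q}[a,h,x_1,x_2]/(x_1+x_2-h,\,x_1x_2+a)$ (well-defined since the first ideal is contained in the second), and $\Lambda_1^{\ast}$ is multiplication by $x_1-x_2$ in the opposite direction. Evaluating on the given bases: the four generators $1,\ x_1,\ h-x_2,\ x_1(h-x_2)$ of $\overline H$ of the two-circle web are sent by $\Lambda_0^{\ast}$ to $1,\ x_1,\ x_1,\ hx_1+a$ (using $x_2=h-x_1$ and $x_1^2=hx_1+a$ in the target), which under $f\circ\Lambda_0^{\ast}\circ g^{-1}$ match $m(1\otimes 1),\ m(X\otimes 1),\ m(1\otimes X),\ m(X\otimes X)=hX+a$. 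The two generators $1,\ x_1$ of $\overline H$ of the singular closed web are sent by $\Lambda_1^{\ast}$ to $x_1-x_2$ and $x_1^2-x_1x_2=hx_1+a-x_1x_2$; rewriting $x_2=h-(h-x_2)$ and $x_1x_2=hx_1-x_1(h-x_2)$ and pushing through $g$ produces $X\otimes 1+1\otimes X-h\,1\otimes 1=\Delta(1)$ and $X\otimes X+a\,1\otimes 1=\Delta(X)$, as required.

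The only real obstacle is bookkeeping: the isomorphism $g$ uses the slightly awkward basis $\{1,\,x_1,\,h-x_2,\,x_1(h-x_2)\}$ rather than the more obvious $\{1,x_1,x_2,x_1x_2\}$, so one must carefully re-express outputs such as $x_2$ and $x_1x_2$ in $g$'s basis (via $x_2=h-(h-x_2)$ and $x_1x_2=hx_1-x_1(h-x_2)$). Once that translation is done, the Frobenius formulas for $m$ and $\Delta$ fall out of the computation with no further homotopy arguments needed.
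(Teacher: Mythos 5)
Your proof is correct and follows essentially the same line of reasoning as the paper's. The paper invokes the preceding lemma identifying $\Lambda_0,\Lambda_1$ with the flip maps $\psi'_{x_4-x_2},\psi_{x_4-x_2}$ after closing up, whereas you apply the explicit matrices $U_0,V_0$ directly to the representatives $(0,v)$; after the specialization $x_4=x_1,\,x_3=x_2$ these two routes yield the identical one-line evaluations $\Lambda_0^*=[v]\mapsto[v]$ and $\Lambda_1^*=[v]\mapsto[(x_1-x_2)v]$, and the remaining translation through the bases $f,g$ matches the paper's word for word.
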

 \begin{proof}
 We know that $\Lambda_0/(x_1 = x_4, x_2 = x_3) = \id \otimes \,\psi'_{x_1-x_2}$ and $\Lambda_1/(x_1 = x_4, x_2 = x_3)  = \id \otimes \,\psi_{x_1-x_2},$ that factorizations $\overline{C}(\Gamma^0)/(x_1 = x_4, x_2 = x_3) = \overline{C}(\raisebox{-8pt}{\includegraphics[height=0.3in]{orcircles.pdf}})$ and $\overline{C}(\Gamma^1)/(x_1 = x_4, x_2 = x_3) = \overline{C}(\raisebox{-8pt}{\includegraphics[height=0.3in]{closed-web.pdf}})$ have nontrivial homology only in degree 0. 
 
 Applying the definition of the morphism $\psi'_{x_1-x_2}$ we have 
 \begin{align*} \Lambda^*_0(x_1) &= x_1, &\Lambda^*_0(h-x_2) &= h-x_2 \\
\Lambda^*_0(1) &= 1, &\Lambda^*_0 (x_1(h-x_2)) &= x_1(h-x_2).\end{align*} But $h-x_2 = x_1$ and $x_1(h-x_2) = hx_1 + a$ in $\overline{H}(\raisebox{-8pt}{\includegraphics[height=0.3in]{closed-web.pdf}}),$ thus using the isomorphisms \ref{isomorphism f} and \ref{isomorphism g} it follows that $\Lambda_0^* = m.$ Similarly, using the definition of $\psi_{x_1-x_2}$ we obtain
 \begin{align*} \Lambda^*_1(1) &= x_1-x_2 \\ \Lambda^*_1(x_1) &= (x_1-x_2)x_1.\end{align*} 
 But $(x_1-x_2)x_1 =  hx_1 +a -x_1x_2 = a + x_1(h-x_2)$ in $\overline{H}(\raisebox{-8pt}{\includegraphics[height=0.3in]{orcircles.pdf}}),$ and $x_1-x_2 = x_1 +(h -x_2) - h.$ By using again the isomorphisms \ref{isomorphism f} and \ref{isomorphism g} we obtain $\Lambda^*_1 = \Delta.$
\end{proof}

It is well known that the commutative Frobenius algebra $\mathcal{A}$ gives rise to a TQFT functor---denoted here by $\mathsf{F}$---from the category $OrCob$ of oriented $(1+1)$--dimensional cobordisms to the category $\mathbb{Q}[a,h]$-Mod of graded $\mathbb{Q}[a, h]$-modules and module homomorphisms. The functor assigns the ground ring $\mathbb{Q}[a, h]$ to the empty 1-manifold, and $\mathcal{A}^{\otimes k}$ to the disjoint union of oriented $k$ circles. On the generating morphisms of $OrCob$, the functor associates the structure maps of the algebra $\mathcal{A}.$ In  particular, $\mathsf{F}(\raisebox{-3pt}{\includegraphics[height=0.15in]{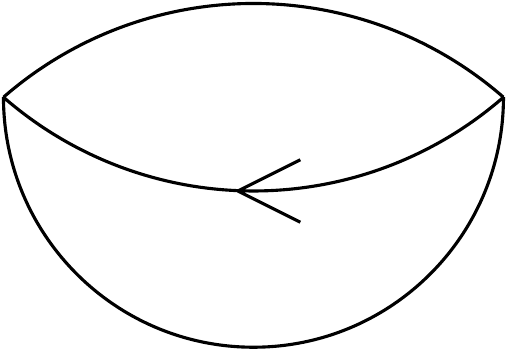}}) = \iota,\, \mathsf{F}(\raisebox{-3pt}{\includegraphics[height=0.15in]{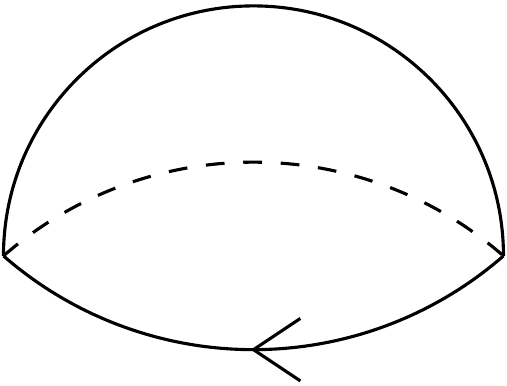}}) = \epsilon,\, \mathsf{F}(\raisebox{-3pt}{\includegraphics[height=0.16in]{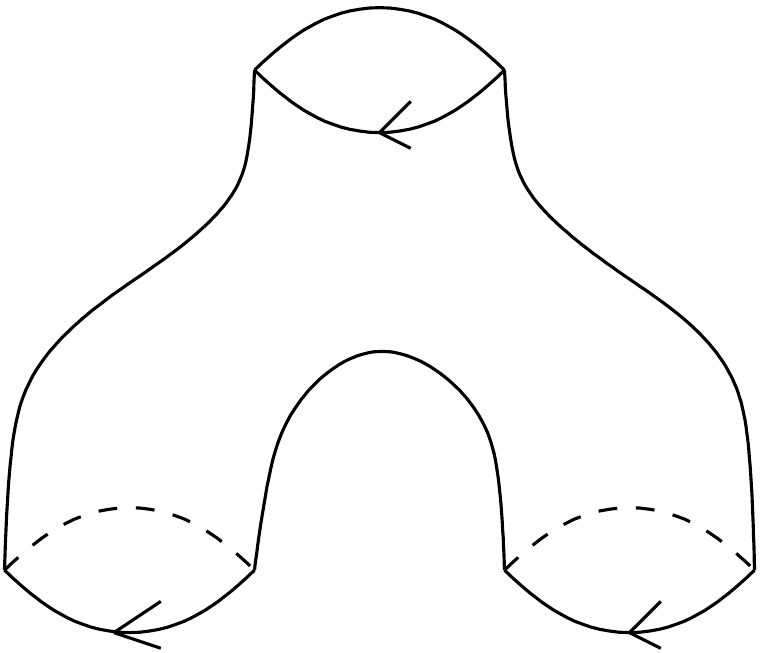}}) = m$ and\, $\mathsf{F}(\raisebox{-3pt}{\includegraphics[height=0.16in]{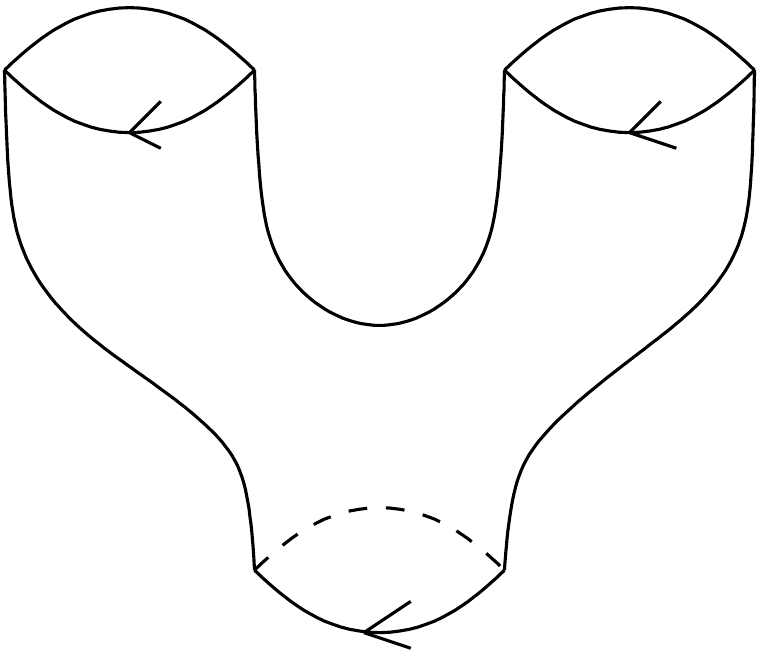}}) = \Delta$ (note that we read cobordisms from bottom to top). 

We remark that the objects in $OrCob$ are oriented in such a way that a cobordism between two objects is ``properly'' oriented. For that, one needs to orient each nesting (concentric)  set of circles so that  orientations alternate, giving to the outermost circle the same orientation, for all nesting sets of circles. 

Given a cobordism $S \in OrCob,$ the homomorphism $\mathsf{F}(S)$ has degree given by the formula $\deg(S) = -\chi(S),$ where $\chi$ is the Euler charecteristic for $S.$ 

Consider a link diagram $D$ with its hypercube of resolutions and associated complex of matrix factorizations $C(D).$ Each resolution $\Gamma$ of $D$ is a collection of closed webs with an even number of vertices and oriented loops. The homology group $\overline{H}(\Gamma)$ of the associated factorization $\overline{C}(\Gamma)$ satisfies $\overline{H}(\Gamma) \cong \mathcal{A}^k,$ where $k$ is the number of connected components of $\Gamma.$ 

We can replace each resolution of $D$ by a simplified one that has fewer number of vertices, via the First, Third and Fourth Isomorphisms. After this operation we are left with a complex of factorizations isomorphic in $K_\omega$ to $C(D),$ in which each resolution is a disjoint union of basic closed webs (with exactly two vertices) and oriented loops. Finally, applying the First Isomorphism to all basic closed webs, the latter complex is isomorphic in $K_\omega$ to a complex of factorizations whose underlying geometric objects are column vectors of nested oriented loops. Each nested set of loops is oriented in such a way that the outermost loop is oriented clockwise, by convention, and as we go inside of the nesting set of loops orientations alternate. In particular, the latter formal complex, call it $\mathcal{C},$ is an object in the category of complexes over $OrCob.$ Applying the functor $\mathsf{F}$ to $\mathcal{C},$ we obtain an ordinary complex $\mathsf{F}(\mathcal{C})$ whose objects are $\mathbb{Q}[a,h]$-modules and whose differentials are $\mathbb{Q}[a,h]$-module homomorphisms. Moreover, $\mathsf{F}(\mathcal{C})$ is homotopy equivalent to $KR_{a,h}(D).$ 
Consequently, the following corollary is implied by the results of this section.

\begin{corollary}
The functor $\overline{H}$ behaves in the same manner as $\mathsf{F}$ does. 
\end{corollary}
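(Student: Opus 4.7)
The plan is to verify that the assignment $\overline{H}$ on webs and on the elementary foam morphisms (singular saddles, births, deaths, and isotopies) matches what $\mathsf{F}$ does on the corresponding oriented $(1{+}1)$-dimensional cobordism obtained after reducing all webs to disjoint unions of oriented loops via the First, Third, and Fourth Isomorphisms. Since both $\overline{H}$ and $\mathsf{F}$ are functorial, and since every foam between closed webs can be cut into a composition of elementary pieces, checking agreement on a generating set of morphisms will suffice.

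First I would handle objects. The First Isomorphism replaces any singular-resolution arc by a plain arc, the Third replaces two adjacent singular arcs by two disjoint arcs, and the Fourth handles the square/triangle moves used in the proof of Reidemeister III. Iterating these, any closed web $\Gamma$ with $2k$ singular vertices can be reduced to a disjoint union of oriented loops, whose number equals the integer $k$ appearing in $\overline{H}(\Gamma)\cong \mathcal{A}^{\otimes k}$ (and equals the number of connected components of the geometric object assigned to $\Gamma$ by the reduction). This matches $\mathsf{F}$ on objects, once we fix the alternating-orientation convention on nested loops described just before the statement.

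Second I would handle morphisms. The singular saddles are handled by the preceding lemma, which gives $\Lambda_0^\ast = m$ and $\Lambda_1^\ast = \Delta$ under the explicit isomorphisms \eqref{isomorphism f}--\eqref{isomorphism g}. For the birth of an oriented loop, the isomorphism $\overline{H}(\Gamma \cup\!\!\raisebox{-3pt}{\includegraphics[height=.15in]{orienloop.pdf}}\,) \cong \overline{H}(\Gamma)\otimes_{\mathbb{Q}[a,h]}\mathcal{A}$ from the disjoint-union lemma, together with the identification $\mathcal{A}\brak{1}\cong \overline{L}_i^i$ sending $1\mapsto 1$, shows that the induced map is $\id\otimes\iota$, matching $\mathsf{F}(\raisebox{-3pt}{\includegraphics[height=.15in]{cuplo.pdf}})$. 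A symmetric computation, using the trace $\epsilon$ built into $\mathcal{A}$ (recall $\epsilon(1)=0$, $\epsilon(X)=1$), identifies the death cobordism with $\id\otimes\epsilon$. Degrees match in each case: $\iota,\epsilon$ have degree $-1$ and $m,\Delta$ have degree $+1$, which agrees with $-\chi$ of the respective cobordism and with the degree shifts carried by the maps $\Lambda_0,\Lambda_1$ and by the disjoint-union isomorphism with $\mathcal{A}$.

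The main obstacle is checking naturality: the reductions of webs to loop collections depend on choices (which isomorphisms to apply and in which order), and one must verify that the elementary foam maps induce the same module homomorphism regardless of the chosen reduction, so that the composite functor $\overline{H}$ is well-defined on the oriented cobordism category. This follows from the fact that the First, Third, and Fourth Isomorphisms are all canonical (they are explicit, being induced by elementary row operations, twists, and exclusion of internal variables), so the diagrams comparing different reductions commute up to the rational scalar automorphisms of $\mathcal{A}^{\otimes k}$ that are already built into the identifications \eqref{isomorphism f}--\eqref{isomorphism g}. Once this compatibility is in place, the corollary follows by gluing: every foam decomposes into cups, caps, saddles, pairs of pants, and isotopies, and $\overline{H}$ agrees with $\mathsf{F}$ on each generator, hence on all compositions.
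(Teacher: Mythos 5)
Your proposal follows essentially the same route as the paper: the paper treats this corollary as a summary of the paragraph preceding it, which reduces every resolution of a link diagram to nested oriented loops via the First, Third, and Fourth Isomorphisms and then invokes the lemma identifying $\Lambda_0^{\ast}=m$ and $\Lambda_1^{\ast}=\Delta$ under the isomorphisms \eqref{isomorphism f}--\eqref{isomorphism g}, so the edge maps in $KR_{a,h}(D)$ coincide with the Frobenius structure maps $\mathsf{F}$ assigns to the corresponding saddles.

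Two minor remarks. First, your discussion of births and deaths (cups/caps matching $\iota$ and $\epsilon$) goes beyond what is needed here: for a fixed link diagram the only differentials in the cube are saddles, and the paper defers the cup/cap computation to the subsequent corollary on functoriality under link cobordisms, so that part of your argument belongs there rather than in this proof. Second, you rightly flag the naturality issue -- that the reduction of a web to a loop collection involves choices of isomorphisms whose independence must be checked -- which the paper glosses over entirely; your stated resolution (that the isomorphisms are ``canonical'' because they are explicit row operations and twists) is a reasonable heuristic, but being explicit does not by itself imply that all comparison diagrams commute, so this point would need a more careful verification to be fully rigorous. Since the paper itself omits this check, you should not be penalized for the same omission, but you should be aware that you have identified a genuine gap in the argument rather than dispatched it.
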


Recalling the author's construction and results in~\cite{CC2}, the previous Corollary implies that the underlying link cohomology and that introduced in~\cite{CC2} are isomorphic, after tensoring them with appropriate rings. In particular, the functor $\overline{H}$ is the same as the tautological functor $\mathcal{F}$ in~\cite{CC2} (at least when we restrict to the case of links). Moreover, this implies that the theory constructed here is functorial under link cobordisms, relative to boundaries.

\begin{corollary}\label{cor:isomorphism}
For each oriented link $L$ there is an isomorphism 
$$HKR_{a,h}(L) \otimes_{\mathbb{Q}[a,h]} \mathbb{Z}[i] \cong \mathcal{H}(L) \otimes_{\mathbb{Z}[i][a,h]} \mathbb{Q}$$
where $\mathcal{H}$ is the cohomology link theory explored in~\cite{CC2}. \end{corollary}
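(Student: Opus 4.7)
The plan is to deduce this by identifying both cohomology theories with the cohomology of a single chain complex, obtained by applying a common TQFT-type functor to the cube of resolutions of a diagram of $L$. The previous Corollary already asserts that $\overline{H}$ behaves in the same manner as $\mathsf{F}$, and the bulk of the work is to match the foam construction of [CC2] to this same pattern after extending scalars.

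First, using the First, Third, and Fourth Isomorphisms I would simplify, as in the preceding discussion, the complex $C(D)$ to a homotopy-equivalent complex $\mathcal{C}$ whose underlying webs are disjoint unions of oriented loops. The preceding Corollary then identifies $KR_{a,h}(D)$ with $\mathsf{F}(\mathcal{C})$, the image of $\mathcal{C}$ under the $1{+}1$ TQFT functor $\mathsf{F}$ attached to the Frobenius algebra $\mathcal{A} = \mathbb{Q}[a,h,X]/(X^2 - hX - a)$. In particular, $HKR_{a,h}(L)$ is the cohomology of the complex of graded $\mathbb{Q}[a,h]$-modules $\mathsf{F}(\mathcal{C})$.

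Second, I would recall that in [CC2] the cohomology $\mathcal{H}(L)$ is constructed from the very same cube of resolutions of $D$ via the tautological functor $\mathcal{F}$ attached to the Frobenius algebra $\mathbb{Z}[i][a,h,X]/(X^2 - hX - a)$, whose unit, counit, multiplication and comultiplication coincide with those of $\mathcal{A}$ identified in the preceding Lemma. Thus both $\mathsf{F}$ and $\mathcal{F}$ assign to each generating cobordism between resolutions the same structure map, merely defined over different coefficient rings. Tensoring the matrix factorization complex with $\mathbb{Z}[i]$ over $\mathbb{Q}[a,h]$ and the foam complex with $\mathbb{Q}$ over $\mathbb{Z}[i][a,h]$ produces two complexes over the common ground ring $\mathbb{Q}[i][a,h]$, and the agreement of the generating structure maps given by the previous Lemma produces an isomorphism of chain complexes which descends to the claimed isomorphism on cohomology.

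The main obstacle will be the careful treatment of coefficient rings and of signs. In [CC2] the unit $i$ enters through the evaluation of certain closed foams and governs sign normalizations that cannot be read off directly from the matrix factorization construction. One therefore has to verify that, after the indicated extensions of scalars, the differentials coming from the saddle cobordisms match the maps $\Lambda_0^*, \Lambda_1^*$ on the nose, or at worst differ by an isomorphism of complexes (for instance coming from a rescaling of the generators $1$ and $X$ of $\mathcal{A}$ by a power of $i$). Once this normalization is fixed, the identification of the two cube-of-resolutions complexes is forced, and the corollary follows by passing to cohomology.
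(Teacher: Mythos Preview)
Your proposal is correct and follows essentially the same approach as the paper: the paper's justification is the single sentence preceding the Corollary, stating that the previous Corollary (that $\overline{H}$ behaves as $\mathsf{F}$) together with the construction of \cite{CC2} forces the two theories to agree after the indicated extensions of scalars. You have simply spelled out this argument in more detail, including the reduction of $C(D)$ to a complex of oriented loops and the identification of both functors with the same Frobenius-algebra TQFT over the common coefficient ring; your caution about the role of $i$ and sign normalizations is appropriate but does not alter the strategy.
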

 
\begin{corollary}
Given a link cobordism $C \co L_1 \to L_2,$ there is a well-defined induced map $HKR_{a,h}(C) \co HKR_{a,h}(L_1) \to HKR_{a,h}(L_2)$ between the associated cohomology groups.
\end{corollary}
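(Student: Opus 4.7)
The plan is to deduce this corollary from Corollary \ref{cor:isomorphism}, which identifies $HKR_{a,h}$ with the foam link cohomology $\mathcal{H}$ of \cite{CC2} (after suitable base change), combined with the fact that $\mathcal{H}$ is already known to be functorial under link cobordisms relative to the boundary. Concretely, I will present $C$ by a movie, attach an elementary map to each transition, compose these to obtain a candidate $HKR_{a,h}(C)$, and then use the identification with $\mathcal{H}$ to conclude that different movie presentations yield the same map.

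First, I would fix a movie presentation of $C \co L_1 \to L_2$: a finite sequence of link diagrams $D^{(0)}, \ldots, D^{(N)}$, with $D^{(0)}$ a diagram of $L_1$ and $D^{(N)}$ a diagram of $L_2$, in which consecutive frames differ by either a Reidemeister move or a Morse modification (birth or death of an unknotted circle, or an oriented saddle). A Reidemeister move yields an isomorphism $C(D^{(k)}) \cong C(D^{(k+1)})$ in $K_\omega$ via the explicit constructions in the proof of Theorem \ref{invariance}. A Morse modification is handled via the TQFT reduction of Section \ref{sec:TQFT}: using the First, Third, and Fourth Isomorphisms, each resolution is simplified into a disjoint union of nested oriented loops, so that $C(D)$ is identified with $\mathsf{F}(\mathcal{C})$ for a formal complex $\mathcal{C}$ in $OrCob$, and a birth, death, or saddle between diagrams then corresponds to an elementary $OrCob$ generator, which $\mathsf{F}$ sends to $\iota$, $\epsilon$, or $m/\Delta$, respectively. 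Passing to cohomology and composing along the movie produces a candidate map $HKR_{a,h}(C) \co HKR_{a,h}(L_1) \to HKR_{a,h}(L_2)$.

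To establish well-definedness of this candidate (independence of the movie presentation up to the unavoidable sign ambiguity of link cobordism functoriality), I would invoke Corollary \ref{cor:isomorphism}. Under that isomorphism, the Reidemeister maps above correspond to those used in \cite{CC2} on the foam side, and the TQFT generators $\iota, \epsilon, m, \Delta$ correspond to the evaluations of the elementary foam cobordisms; this is essentially the content of the preceding discussion identifying $\overline{H}$ with the tautological functor $\mathcal{F}$ of \cite{CC2}. Consequently, after tensoring with the appropriate rings, the candidate composition agrees with $\mathcal{H}(C)$, which by \cite{CC2} is invariant under the Carter--Saito movie moves. The same invariance therefore holds for $HKR_{a,h}(C)$.

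The main obstacle I expect is this naturality step: verifying that the isomorphism of Corollary \ref{cor:isomorphism} really intertwines matrix-factorization cobordism maps with foam cobordism maps, rather than only identifying the underlying cohomology groups. The identifications $\Lambda_0^* = m$ and $\Lambda_1^* = \Delta$ in Section \ref{sec:TQFT}, together with the explicit Reidemeister isomorphisms constructed in the proof of Theorem \ref{invariance}, supply essentially all of the compatibilities that are needed; what remains is straightforward bookkeeping under the identification of $\overline{H}$ with $\mathcal{F}$.
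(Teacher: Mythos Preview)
Your approach matches the paper's: the corollary is stated there without a separate proof, as an immediate consequence of the identification of $\overline{H}$ with the tautological functor $\mathcal{F}$ of \cite{CC2} (the paragraph preceding Corollary~\ref{cor:isomorphism}) together with the functoriality of $\mathcal{H}$ established in \cite{CC2}, and your movie-presentation outline simply makes this explicit. One small correction: your parenthetical about an ``unavoidable sign ambiguity'' is out of place here, since a central feature of the foam theory in \cite{CC1, CC2} is precisely that it resolves this sign, and the paper accordingly claims functoriality \emph{relative to boundaries}.
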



\section{The algebra $\overline{R}(\Gamma)$}

Let $\Gamma$ be a resolution of a link $L$ and denote by $e(\Gamma)$ the set of all edges in $\Gamma.$ We introduce an algebra $\overline{R}(\Gamma)$ and exhibit the $\overline{R}(\Gamma)$ module structure of $\overline{H}(\Gamma).$

\begin{definition}
We define $\overline{R}(\Gamma) : = R/(\textbf{a}, \textbf{b}),$ where $R = \mathbb{Q}[a,h][X_i\vert i \in e(\Gamma)]$ and $\textbf{a}, \textbf{b}$ are the polynomials that are used to define the factorization $\overline{C}(\Gamma).$
\end{definition}

$\overline{H}(\Gamma)$ is an $\overline{R}(\Gamma)$ module, since multiplication by any polynomial in $(\textbf{a},\textbf{b})$ induces a null-homotopic endomorphisms of $\overline{C}(\Gamma).$

\begin{proposition}\label{relations}
The algebra $\overline{R}(\Gamma)$ is spanned by generators $X_i,$ where $i\in e(\Gamma),$ subject to the following relations:
\begin{enumerate}
\item For every $X_i$ we have $X_i^2 = h X_i + a.$
\item For every singular resolution (of the form $\Gamma^1$) in $\Gamma,$ the generators $X_1, X_2, X_3, X_4$ satisfy $X_1 + X_2 = h = X_3 + X_4,$ and $X_1 X_2 = -a = X_3 X_4.$
\end{enumerate} 
\end{proposition}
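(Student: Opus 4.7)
The plan is to chase the generators of the ideal $(\mathbf{a},\mathbf{b})$ through the tensor-product description of $\overline{C}(\Gamma)$ and show that, after the obvious identifications, they reduce exactly to the two families of relations listed in the statement.

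First I would recall how $(\mathbf{a},\mathbf{b})$ is assembled. Since $\overline{C}(\Gamma)$ is a tensor product of the arc factorizations $\overline{L}_i^j=(\overline{\pi}_{ij},\,x_j-x_i)$ and the singular-resolution factorizations $(\overline{u}_1,\,x_1+x_2-x_3-x_4)\otimes(\overline{u}_2,\,x_1x_2-x_3x_4)$, the ideal $(\mathbf{a},\mathbf{b})$ is generated by the collection of all these polynomials (viewed in the mark variables). The arc relations $x_j-x_i=0$ force all marks lying on a single edge $e$ of $\Gamma$ to be identified; calling this common value $X_e$ recovers the fact that $\overline{R}(\Gamma)$ is generated over $\mathbb{Q}[a,h]$ precisely by one generator per edge, matching the setup of the proposition.

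Next I would verify the forward direction, namely that relations (1) and (2) hold in $\overline{R}(\Gamma)$. For a single edge $e$ carrying two consecutive marks $i,j$, we have $\overline{\pi}_{ij}=x_i^2+x_ix_j+x_j^2-\tfrac{3}{2}h(x_i+x_j)-3a$; after the identification $x_i=x_j=X_e$ this collapses to $3(X_e^2-hX_e-a)$, yielding relation (1). For a singular resolution with incoming edges $X_1,X_2$ and outgoing edges $X_3,X_4$, the relation $\overline{u}_2=-3(x_3+x_4)+3h=0$ gives $X_3+X_4=h$, combined with $x_1+x_2-x_3-x_4=0$ this also gives $X_1+X_2=h$. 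Substituting $X_1+X_2=X_3+X_4=h$ into
\[
\overline{u}_1=(x_1+x_2)^2+(x_1+x_2)(x_3+x_4)+(x_3+x_4)^2-3x_1x_2-\tfrac{3}{2}h(x_1+x_2+x_3+x_4)-3a
\]
collapses the expression to $-3X_1X_2-3a$, so $\overline{u}_1=0$ forces $X_1X_2=-a$; together with $x_1x_2-x_3x_4=0$ we also get $X_3X_4=-a$, which is relation (2).

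For the reverse direction I would show that the relations (1) and (2) already generate the full ideal $(\mathbf{a},\mathbf{b})$. The edge-relation $x_j-x_i=0$ is implicit in the very fact of working with one variable per edge, and $\overline{\pi}_{ij}=3(X_e^2-hX_e-a)$ then follows directly from (1). At each singular resolution, relation (2) gives $x_1+x_2-x_3-x_4=(X_1+X_2)-(X_3+X_4)=h-h=0$ and $x_1x_2-x_3x_4=(-a)-(-a)=0$, and the same substitutions used above make $\overline{u}_1$ and $\overline{u}_2$ vanish identically. Thus the two ideals coincide and the presentation of $\overline{R}(\Gamma)$ is exactly as claimed.

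I do not anticipate a genuine obstacle here: the proof is a bookkeeping exercise in which the real content is the explicit evaluation of $\overline{u}_1,\overline{u}_2,\overline{\pi}_{ij}$ after imposing the edge identification. The only point to be careful about is clean bookkeeping when an edge carries several marks, ensuring that the iterated application of $x_j-x_i=0$ really collapses the corresponding strand of the Koszul tensor product to a single generator $X_e$, and that one does not accidentally miss a cross-term when substituting into $\overline{u}_1$.
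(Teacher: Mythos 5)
Your proof is correct and follows essentially the same strategy as the paper: read off the generators of the ideal $(\mathbf{a},\mathbf{b})$ piece by piece from the tensor-product description of $\overline{C}(\Gamma)$ and reduce them to the stated relations. Two small but genuine differences are worth noting. First, you explicitly verify the reverse inclusion---that the relations (1) and (2) actually generate $(\mathbf{a},\mathbf{b})$---by substituting back and checking that $\overline{\pi}_{ij}$, $\overline{u}_1$, $\overline{u}_2$, $x_1+x_2-x_3-x_4$ and $x_1x_2-x_3x_4$ all vanish. The paper omits this direction, even though the proposition's phrase ``subject to the following relations'' asserts a presentation, so your proof is, if anything, more complete. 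Second, for relation (1) you argue via the arc factorization $\overline{\pi}_{ij}$ after collapsing $x_i=x_j$ along an edge, which assumes the edge carries at least two marks. The paper instead treats one-marked circles directly via $\overline{\pi}_{ii}$ and appeals to null-homotopy of multiplication by $\partial_i\omega$ for the remaining edges; that appeal is somewhat loose (for internal marks of a closed web one has $\partial_i\omega=0$, so the statement as phrased is vacuous). The cleanest observation, which neither proof states explicitly, is that for an edge meeting a singular point relation (1) is already a formal consequence of relation (2): from $X_1+X_2=h$ and $X_1X_2=-a$ one has $X_1(h-X_1)=-a$, i.e.\ $X_1^2=hX_1+a$. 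With that remark your argument covers all edges regardless of marking, and the approach is a modest improvement over the paper's: it stays entirely inside the commutative-algebra framework of $(\mathbf{a},\mathbf{b})$ without invoking the null-homotopy lemma.
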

\begin{proof}
For each closed circle---with one mark $i$---in $\Gamma,$ the expression $\overline{\pi}_{ii} = 3(x_i^2 - hx_i -a)$ lives in $(\textbf{a},\textbf{b}).$ For any other $X_i,$ we use that the multiplication by $\partial_i \omega= 3(x_i^2 -hx_i -a)$ induces a null-homotopic endomorphism of $\overline{C}(\Gamma).$ Thus $X_i^2 = h X_i + a$ holds for any $X_i.$ To prove the second statement we recall that for each singular resolution with arcs labeled as those of $\Gamma^1,$ the polynomials $x_1 + x_2 -x_3 -x_4$ and  $x_1x_2 -x_3x_4$ are in $(\textbf{a},\textbf{b}),$ thus $X_1 + X_2 = X_3+X_4$ and $X_1X_2 = X_3X_4$ in  $\overline{R}(\Gamma),$ near each singular resolution of $\Gamma.$ Moreover, since $\overline{u}_2 = 3(x_3 +x_4) -3h$ and $\overline{u}_1 = (x_1 + x_2)^2 + (x_1 + x_2)(x_3 + x_4) + (x_3 + x_4)^2 -3x_1x_2 -\frac{3}{2} h( x_1+x_2+x_3 + x_4) -3a$ are also in $(\textbf{a},\textbf{b}),$ we have $X_1 + X_2 = h = X_3 + X_4,$ and $X_1 X_2 = -a = X_3 X_4.$
\end{proof}

\begin{remark}
Relations given in Proposition~\ref{relations} have a geometric interpretation via a TQFT with dots, where a dot stands for the multiplication by $X$ endomorphism of the algebra $\mathcal{A}.$ Specifically, the relation $X_i^2 = h X_i + a$ translates into the following skein relation: a disk decorated by two dots equals a disk decorated by one dot times $h$ plus a disk times $a$ (this one is the relation (2D) in~\cite{CC2}). Given two edges in $\Gamma$ with labels $i$ and $j$ and which share a vertex, the relation $X_i + X_j = h$ gives a rule for exchanging dots between two neighboring facets of a foam, while relation $X_i X_j = -a$ means that if each of the two neighboring facets have a single dot, we can ``erase'' both dots and multiply the corresponding foam by $-a$ (see relations (ED) in~\cite[Figure 7]{CC2}).
\end{remark}

It was proved in\cite{CC2} that if one lets $a$ and $h$ be complex numbers and considers the polynomial $f(X) = X^2 -hX -a \in \mathbb{C}[X],$ the isomorphism class of the complex $\mf{sl}(2)$-foam cohomology $\mathcal{H}(L, \mathbb{C})$ is determined by the number of distinct roots of $f[X].$ Corollary~\ref{cor:isomorphism} implies that this also holds for the complex matrix factorization cohomology $HKR_{a,h}(L, \mathbb{C}).$ The results are as follows.

If $f(X) = (X-\alpha)^2,$ for some $\alpha \in \mathbb{C},$ there is an isomorphism between $HKR_{a,h}(L, \mathbb{C})$ and the original $\mf{sl}(2)$ Khovanov-Rozansky cohomology over $\mathbb{C}.$

If $f(X) = (X -\alpha)(X-\beta),$ for some $\alpha, \beta \in \mathbb{C},$ for each resolution $\Gamma$ of a link $L,$ the cohomology $\overline{H}(\Gamma)$ is a free module of rank one over the complex algebra $\overline{R}(\Gamma).$ 

\begin{proposition}
For any $n$-component link $L,$ the dimension of $HKR_{a,h}(L, \mathbb{C})$ equals $2^n,$ and to each map $\phi \co \{ \text{components of L} \} \to S = \{ \alpha, \beta \}$ there exists a non-zero element $h_{\phi} \in HKR_{a,h}(L, \mathbb{C})$ which lies in the cohomological degree
$$ -2 \sum_{\substack {(u_1,u_2)\in S \times S \\ u_1 \neq u_2}} lk(\phi^{-1}(u_1), \phi^{-1}(u_2)),$$
and all $h_{\phi}$ generate $HKR_{a,h}(L, \mathbb{C}).$
\end{proposition}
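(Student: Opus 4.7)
The plan is to invoke Corollary~\ref{cor:isomorphism} to transport the analogous result of~\cite{CC2} from the foam theory $\mathcal{H}(L,\mathbb{C})$ to the matrix-factorization theory, and then to unpack the construction through the TQFT picture of Section~\ref{sec:TQFT}. Corollary~\ref{cor:isomorphism} supplies an isomorphism $HKR_{a,h}(L,\mathbb{C})\cong\mathcal{H}(L,\mathbb{C})$ after base-change to $\mathbb{C}$, so the dimension count, the existence of the $2^n$ distinguished classes $h_\phi$, and their cohomological degrees all transport from the corresponding theorem in~\cite{CC2}.

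For a direct argument, the starting point is that since $f(X)=(X-\alpha)(X-\beta)$ has distinct roots, the Chinese Remainder Theorem gives
$$\mathcal{A}\otimes_{\mathbb{Q}[a,h]}\mathbb{C}\;\cong\;\mathbb{C}\,e_\alpha\oplus\mathbb{C}\,e_\beta,\qquad e_\alpha=\tfrac{X-\beta}{\alpha-\beta},\quad e_\beta=\tfrac{X-\alpha}{\beta-\alpha},$$
as a product of one-dimensional Frobenius algebras. A short computation with the structure of Section~\ref{sec:TQFT} gives $m(e_u\otimes e_v)=\delta_{uv}e_u$ and $\Delta(e_u)=\pm(\alpha-\beta)\,e_u\otimes e_u$, with the sign determined by $u$. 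Thus every basic TQFT cobordism acts diagonally on the $e$-labels, and any merge of two differently labeled circles vanishes.

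I would then apply $\mathsf{F}$ to the simplified complex $\mathcal{C}$ of nested oriented loops from the end of Section~\ref{sec:TQFT}. The $e$-decomposition produces a splitting
$$\mathsf{F}(\mathcal{C})\otimes\mathbb{C}\;=\;\bigoplus_{\phi\co\pi_0(L)\to\{\alpha,\beta\}}\mathsf{F}(\mathcal{C})_\phi,$$
with each subcomplex a TQFT evaluation in a one-dimensional Frobenius algebra. The index set is exactly the set of $\phi$-colorings because each internal loop of any resolution of $D$ inherits a well-defined link component from the cube structure, and compatibility with the merge/split cobordisms forces its label to depend only on that component. Since each $\mathsf{F}(\mathcal{C})_\phi$ is one-dimensional, its cohomology is one-dimensional and concentrated in a single degree. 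A distinguished generator $h_\phi$ is obtained by tensoring $e_{\phi([C])}$ over the Seifert circles $C$ of the oriented resolution and extending uniquely across the cube inside the $\phi$-sector. Linear independence of the $h_\phi$ follows from the sector decomposition, giving $\dim HKR_{a,h}(L,\mathbb{C})=2^n$.

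The main obstacle is computing the cohomological degree of $h_\phi$. Using the cube-position shifts of Figure~\ref{fig:crossings to chain complexes}, a crossing whose two local strands lie on components with equal $\phi$-color contributes trivially to the degree of the surviving $\phi$-sector, while a crossing between strands of distinct $\phi$-colors contributes $\pm 2$ according to sign (both the $q$-shift and the cohomological shift must align in the idempotent-diagonal picture in order for $h_\phi$ to remain a non-trivial class). Repackaging the signed contributions by pairs of link components converts them into the standard linking-number identity, producing precisely $-2\sum_{u_1\neq u_2}lk(\phi^{-1}(u_1),\phi^{-1}(u_2))$. This is the universal $\mf{sl}(2)$ analog of the classical Lee--Gornik degree computation, which in any case can be imported wholesale from~\cite{CC2} via Corollary~\ref{cor:isomorphism}.
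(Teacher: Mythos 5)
Your opening paragraph is precisely the paper's proof: the proposition is obtained by transporting the analogous theorem of \cite{CC2} through Corollary~\ref{cor:isomorphism}, and the paper gives no independent argument beyond that citation. The direct Lee--Gornik style sketch you add is a useful supplement and its algebra is right (the idempotent computation with $\mathcal{A}\otimes\mathbb{C}\cong\mathbb{C}e_\alpha\oplus\mathbb{C}e_\beta$, $m(e_u\otimes e_v)=\delta_{uv}e_u$, $\Delta(e_u)=\pm(\alpha-\beta)e_u\otimes e_u$ is correct), but one step is not: the claim that the splitting is indexed by $\phi\colon\pi_0(L)\to\{\alpha,\beta\}$ because ``each internal loop of any resolution of $D$ inherits a well-defined link component from the cube structure'' is false in general. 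A circle appearing at an arbitrary vertex of the cube can run through arcs of several distinct components of $L$, so there is no canonical map from circles to $\pi_0(L)$. The standard argument is subtler: one shows that after decomposing the cube over idempotent colorings of the circles at each vertex, the cohomology is concentrated at colorings compatible with an orientation of $L$---i.e.\ the Seifert states of the $2^n$ orientations---and it is those admissible states that index the generators by $\phi$. Your degree computation is likewise heuristic, though it lands on the right formula. Since you explicitly fall back on importing both the generator count and the degree formula from \cite{CC2} via Corollary~\ref{cor:isomorphism}, the proposal as a whole is sound and matches the paper; only the auxiliary direct argument would need this repair if it were meant to stand on its own.
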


\end{document}